\documentclass[12pt,reqno]{amsart}
\usepackage{a4wide}
\usepackage[T1]{fontenc}
\usepackage{amssymb,amsmath,amsthm,mathtools}
\usepackage{mathrsfs} 
\usepackage{enumerate}
\usepackage{enumitem}
\usepackage[dvipsnames]{xcolor}
\usepackage{graphicx}
\usepackage{tikz}
\usetikzlibrary{patterns}
\usepackage{hyperref}
\usepackage{amsrefs} 
\usepackage{cleveref} 
\hypersetup{colorlinks=true, linkcolor=MidnightBlue, citecolor=teal, urlcolor=cyan}

\makeatletter
\g@addto@macro\bfseries{\boldmath} 

\renewcommand{\tocsection}[3]{%
  \indentlabel{\@ifnotempty{#2}{\bfseries#1 #2.\;\,}}\bfseries#3}

\renewcommand{\tocsubsection}[3]{%
  \indentlabel{\@ifnotempty{#2}{#1 #2.\;\,}}#3}

\newcommand\@dotsep{3.5}
\def\@tocline#1#2#3#4#5#6#7{\relax
  \ifnum #1>\c@tocdepth 
  \else
    \par \addpenalty\@secpenalty\addvspace{#2}%
    \begingroup \hyphenpenalty\@M
    \@ifempty{#4}{%
      \@tempdima\csname r@tocindent\number#1\endcsname\relax
    }{%
      \@tempdima#4\relax
    }%
    \parindent\z@ \leftskip#3\relax \advance\leftskip\@tempdima\relax
    \rightskip\@pnumwidth plus1em \parfillskip-\@pnumwidth
    #5\leavevmode\hskip-\@tempdima{#6}\nobreak
    \leaders\hbox{$\m@th\mkern \@dotsep mu\hbox{.}\mkern \@dotsep mu$}\hfill
    \nobreak
    \hbox to\@pnumwidth{\@tocpagenum{\ifnum#1=1\bfseries\fi#7}}\par
    \nobreak
    \endgroup
  \fi}
\AtBeginDocument{%
\expandafter\renewcommand\csname r@tocindent0\endcsname{0pt}
}
\def\l@subsection{\@tocline{2}{0pt}{2.5pc}{5pc}{}}
\makeatother

\newtheorem{theorem}{Theorem}[section]
\newtheorem{lemma}[theorem]{Lemma}
\newtheorem{corollary}[theorem]{Corollary}
\newtheorem{proposition}[theorem]{Proposition}
\newtheorem{fact}[theorem]{Fact}
\newtheorem{claim}[theorem]{Claim}
\newtheorem*{claim*}{Claim}

\newtheorem{problem}[theorem]{Problem}
\newcounter{maintheorem}

\newtheorem{mainth}[maintheorem]{Theorem}
\Crefname{maintheorem}{Theorem}{Main Theorems}
\theoremstyle{remark}
\newtheorem{remark}[theorem]{Remark}
\theoremstyle{definition}
\newtheorem{definition}[theorem]{Definition}

\newtheorem{example}[theorem]{Example}
\numberwithin{equation}{section}
\makeatother

\newcommand{\R}{\mathbb{R}}
\newcommand{\ZZ}{\mathbb{Z}}
\newcommand{\N}{\mathbb{N}}
\newcommand{\e}{\varepsilon}
\renewcommand{\theta}{\vartheta}
\renewcommand{\rho}{\varrho}
\newcommand{\p}{\varphi}

\newcommand{\F}{\mathcal{F}}
\newcommand{\B}{\mathcal{B}}
\newcommand{\J}{\mathcal{J}}

\newcommand{\n}{\left\Vert\cdot\right\Vert}
\newcommand{\nn}[1]{{\left\vert\kern-0.25ex\left\vert\kern-0.25ex\left\vert #1 
\right\vert\kern-0.25ex\right\vert\kern-0.25ex\right\vert}}
\newcommand{\bone}{\text{\usefont{U}{bbold}{m}{n}1}}
\newcommand{\cut}{\mathord{\upharpoonright}}
\renewcommand{\leq}{\leqslant}
\renewcommand{\geq}{\geqslant}

\renewcommand{\hat}{\widehat}
\newcommand{\spn}{{\rm span}}
\DeclareMathOperator{\cf}{cf}
\DeclareMathOperator{\inte}{int}
\DeclareMathOperator{\supp}{supp}
\DeclareMathOperator{\dens}{dens}
\DeclareMathOperator{\dist}{dist}

\setlist[enumerate]{leftmargin=*, widest=iii}
\setlist[enumerate,1]{label=\rm{(\roman*)}, ref=\roman*}
\setlist[itemize]{leftmargin=*, widest=iii}

\newcommand{\V}{\mathcal{V}}
\newcommand{\X}{\mathcal{X}}
\newcommand{\Y}{\mathcal{Y}}
\newcommand{\W}{\mathcal{W}}
\newcommand{\Z}{\mathcal{Z}}
\newcommand{\D}{\mathcal{D}}
\renewcommand{\P}{\mathcal{P}}
\newcommand{\K}{\mathcal{K}}
\newcommand{\C}{\mathcal{C}}
\newcommand{\M}{\mathcal{M}}
\renewcommand\qedsymbol{$\blacksquare$} 

\title[Packings in classical Banach spaces]{Packings in classical Banach spaces}

\author[C.A.~De~Bernardi]{Carlo Alberto De Bernardi}
\address[C.A.~De~Bernardi]{Dipartimento di Matematica per le Scienze economiche, finanziarie ed attuariali, Universit\`a Cattolica del Sacro Cuore, 20123 Milano, Italy\newline
\href{https://orcid.org/0000-0002-9654-1324}{ORCID: \texttt{0000-0002-9654-1324}}}
\email{carloalberto.debernardi@unicatt.it, carloalberto.debernardi@gmail.com}

\author[T.~Russo]{Tommaso Russo}
\address[T.~Russo]{Universit\"{a}t Innsbruck, Department of Mathematics, Technikerstra\ss e 13, 6020 Innsbruck, Austria \newline
\href{https://orcid.org/0000-0003-3940-2771}{ORCID: \texttt{0000-0003-3940-2771}}}
\email{tommaso.russo@uibk.ac.at, tommaso.russo.math@gmail.com}

\author[\c S.~Sezgek]{\c Seyda~Sezgek}
\address[\c S.~Sezgek]{Department of Mathematics, Mersin University, Mersin, Turkey \newline
\href{https://orcid.org/0000-0001-9035-3114}{ORCID: \texttt{0000-0001-9035-3114}}}
\email{seydasezgek@mersin.edu.tr}

\author[J.~Somaglia]{Jacopo Somaglia}
\address[J.~Somaglia]{Politecnico di Milano, Dipartimento di Matematica, Piazza Leonardo da Vinci 32, 20133 Milano, Italy \newline
\href{https://orcid.org/0000-0003-0320-3025}{ORCID: \texttt{0000-0003-0320-3025}}}
\email{jacopo.somaglia@polimi.it}

\subjclass[2020]{46B20, 52C17, 46B04 (primary), and 52A05, 46B25, 46E30, 05B40 (secondary)}

\keywords{Packing, Lattice packing, Simultaneous packing and covering constant, Discrete subgroup of a normed space, $\phi$-octahedral normed space}
\thanks{The research of the authors has been partially supported by the GNAMPA (INdAM -- Istituto Nazionale di Alta Matematica).}

\begin{document}
\begin{abstract} We obtain several new results on the simultaneous packing and covering constant $\gamma(\mathcal{X})$ of a Banach space $\mathcal{X}$, and its lattice counterpart $\gamma^*(\mathcal{X})$. These constants measure how efficient a (lattice) packing by unit balls in $\mathcal{X}$ can be, the optimal case being that $\gamma(\mathcal{X})= 1$ and the worst that $\gamma(\mathcal{X})= 2$. Our first main result is that $\gamma(\mathcal{X})> 1$ whenever $B_\mathcal{X}$ admits a LUR point, which leads us to a negative answer to a question of Swanepoel. We also develop general methods to compute these constants for a large class of spaces. As a sample of our findings:
\begin{enumerate}
    \item $\gamma^*(\mathcal{X})= 1$ when $\mathcal{X}$ is a separable octahedral Banach space, or $\mathcal{X}= \mathcal{C}(\mathcal{K})$, where $\mathcal{K}$ is zero-dimensional;
    \item $\gamma(\ell_p(\kappa)\oplus_r \mathcal{X})= \gamma^*(\ell_p(\kappa)\oplus_r \mathcal{X})= \frac{2}{2^{1/p}}$, whenever $\rm{dens}(\mathcal{X})< \kappa$ and $1\leq r\leq p< \infty$;
    \item $\gamma(L_p(\mu))= \gamma^*(L_p(\mu))= \frac{2}{2^{1/p}}$ for $1\leq p\leq 2$ and every measure $\mu$;
    \item there exist reflexive (resp.~octahedral) Banach spaces $\mathcal{X}$ with $\gamma(\mathcal{X})= 2$.
\end{enumerate}
We leave a large area open for further research and we indicate several possible directions.
\end{abstract}
\maketitle
\tableofcontents

\section{Introduction}
The study of optimal packings by balls is a classical topic in mathematics, with several applications to self-correcting codes, cryptography, granular materials, and phase transitions, just to name a few. We refer, \emph{e.g.}, to \cites{Lowen, Micciancio, Zong_Research} for some of these applications and to \cites{ConwaySloane, Rogers, Zong_sphere_book, Zong_Expo} for expository presentations of parts of the theory. The most well-studied notion of optimality is obtained by requiring the packing to have maximal density: given a convex body $C$ in $\R^n$, $\delta(C)$ is the maximal density of a packing by translates of $C$ and $\delta^*(C)$ is the maximal density of a lattice packing by $C$. In the particular case when $C$ is the Euclidean ball in $\R^n$, it is customary to just write $\delta_n$ and $\delta^*_n$. The exact value of $\delta_n$ is only known for $n=2$ \cite{Thue}, $n=3$ \cite{Hales} (the case $n=3$ was known as \emph{Kepler's problem} and was originally considered in order to stack cannonballs on a ship in the most efficient way), $n=8$ \cite{Viazovska}, and $n=24$ \cite{Viazovska24}. We refer to \cite{Cohn_notices} for a survey introduction to the cases $n=8,24$ and \cite{Viazovska_univ} for a generalisation of this result. There also is a large literature involving upper and lower bounds for $\delta_n$ and $\delta^*_n$, starting from the classical Minkowski--Hlawka theorem that $\delta^*_n\geq 2\zeta(n)2^{-n}$ till the very recent bound $\delta^*_n\geq c n^2 2^{-n}$, obtained by Klartag \cite{Klartag}. We refer to said paper and to \cites{CJMS, Sardari} for more references and information on upper bounds (the state-of-the-art upper bound is roughly $\delta_n\lesssim 0.66^n$). Concerning the constants $\delta(C)$ and $\delta^*(C)$ for arbitrary convex bodies $C$, we just mention here that
\[ \delta(C)= \delta^*(C)\geq \frac{2}{3} \]
for all convex bodies in $\R^2$, and equality holds if an only if $C$ is a triangle. We refer to \cite{Zong_Expo} and references therein for a reference on this fact and more results.

Rogers \cite{Rogers_JLMS} considered a different notion of optimality of a packing, by requiring that slightly inflating each body one obtains a covering of the space. More precisely, given a convex body $C$ in $\R^n$, $\gamma(C)$ is the smallest $r>0$ such that there exists a packing $\{C+\lambda\}_{\lambda\in \Lambda}$ by translates of $C$ such that $\{rC+\lambda\}_{\lambda\in \Lambda}$ covers $\R^n$; $\gamma^*(C)$ is defined by requiring additionally that $\Lambda$ is a lattice. (The fact that such a minimum exists follows from Mahler compactness theorem \cite{Mahler}, see, \emph{e.g.}, \cite{Coppel}*{Section VIII.6}.) In the particular case when $C$ is the Euclidean ball, these constants are indicated $\gamma_n$ and $\gamma^*_n$ respectively and called \emph{Rogers' constants}. As in the previous paragraph, these constants are only known for very few dimensions: $\gamma^*_n$ has been determined only for $n\leq 5$ and $\gamma_n$ only for $n\leq 3$ (we refer to \cite{Zong_Research}*{Table 3} or \cite{LiZong} for the exact values and references). It is easy to check that $(\gamma_n)^n\delta_n\geq 1$; therefore the upper bound discussed in the previous paragraph yields $\gamma_n\geq 2^{0,599} +o(1)$. Interestingly, it follows that $\liminf \gamma_n > \sqrt{2}$ (which, as we will see below, is the value of $\gamma$ for infinite-dimensional Hilbert spaces). Moreover, Butler \cite{Butler} proved that $\gamma^*(C)\leq 2+o(1)$ for every symmetric convex body in $\R^n$ (which improved the estimate $\gamma^*(C)\leq 3$ from \cite{Rogers_JLMS}). In low dimensions there of course are more precise estimates. For instance, for every convex body $C$ in the plane one has
\[ \gamma(C)= \gamma^*(C)\leq \frac{3}{2} \]
with equality if and only if $C$ is a triangle \cite{Linhart} and 
\[ \gamma^*(C)\leq 2(2-\sqrt{2}) \]
for every symmetric convex body, with equality if and only if $C$ is an affinely regular octagon \cite{Zong_Adv} (see, \emph{e.g.}, \cite{Zong_Expo}*{Section 6.1}). In dimension $3$, one has $\gamma^*(O)= \frac{7}{6}$ where $O$ is the regular octahedron (namely, the unit ball of the $\ell_1$-norm in $\R^3)$ \cite{LiZong} and $\gamma^*(C)\leq \frac{7}{4}$ for all convex bodies \cite{Zong7/4}. One more important result is that, for every convex body $C$ in $\R^n$, $\gamma(C)=1$ if and only if $\gamma^*(C)=1$, \cite{Zong_strange_book}*{Chapter 3} (but it is not known if $\gamma(C)= \gamma^*(C)$ for all convex bodies in $\R^n$, see \Cref{probl: gamma= gamma*}).

While the definition of $\delta(C)$ is inherently finite dimensional, defining $\gamma(C)$ in infinite dimensions only requires the adjustment of specifying which normed space $\X$ the body $C$ belongs to. Further, if one restricts the attention to symmetric bodies, considering symmetric bodies in $\X$ is equivalent to considering unit balls of equivalent norms on $\X$. In other words, it is sufficient to investigate $\gamma(B_\X)$ for every normed space $\X$. As in \cite{Swanepoel}, we actually associate the constant $\gamma$ to the normed space rather than to its unit ball; thus, we just write $\gamma(\X)$ instead of $\gamma(B_\X)$ (and, of course, similarly for $\gamma^*(\X)$). We refer to \Cref{sec: def gamma} for the precise definition of the constants and their basic properties.

We now summarise the known results on $\gamma(\X)$ and $\gamma^*(\X)$ for infinite-dimensional normed spaces $\X$. The first known result is the fact that $\gamma^*(\X)\leq 3$ for every infinite-dimensional normed space $\X$, due to Rogers \cite{Rog84}. This was recently improved to $\gamma^*(\X)\leq 2$ in \cite{DRShilbert}, which solved a problem due to Swanepoel \cite{Swanepoel} (the case of separable $\X$ was already answered in \cite{DOSZ}). Casini, Papini, and Zanco \cite{CPZ} obtained the lower bound $\gamma(\X)\geq \frac{2}{K(\X)}$ (\cite{CPZ} states the result for reflexive spaces only, but essentially the same argument works for every normed space, see \Cref{prop: CPZ gamma>=2/K} below). Summarising, we have
\[ 1\leq \frac{2}{K(\X)}\leq \gamma(\X)\leq \gamma^*(\X)\leq 2 \]
for every infinite-dimensional normed space ($K(\X)$ is the Kottman constant of $\X$, whose definition we recall in \Cref{sec: Kottman}). In few cases, exact computations are also available. To begin with, it is clear that $\gamma^*(c_0)= \gamma^*(\ell_\infty)=1$, just by consideration of the even integers lattice. Swanepoel \cite{Swanepoel} proved that $\gamma(\ell_p)= 2/2^{1/p}$ for $1\leq p< \infty$, which was recently improved in \cite{DRShilbert}*{Corollary 3.4} to $\gamma^*(\ell_p(\kappa))= 2/2^{1/p}$ for $1\leq p< \infty$ and every infinite cardinal $\kappa$. In each of the previous examples, $\gamma(\X)$ is actually equal to $\frac{2}{K(\X)}$, which naturally raises the question whether equality might hold for all normed spaces. 

\begin{problem}[Swanepoel, \cite{CPZ}*{Question~1.10}, \cite{Swanepoel}*{Section 6.2}]\label{probl: gamma=2/k} Is it true that
\[ \gamma(\X)= \frac{2}{K(\X)} \]
for every infinite-dimensional Banach space $\X$?
\end{problem}

Our original motivation for the research presented in this paper was to find a counterexample to this problem. As it turns out, throughout the paper we shall offer a large collection of counterexamples, including in particular reflexive ones (the reflexive case was specifically asked in \cite{CPZ}*{Question~1.10}). Our first and main source of examples is obtained via the following result, that will be the main focus of  \Cref{sec: packing LUR}.

\begin{mainth}\label{mth: LUR point} Every normed space $\X$ such that $B_\X$ has a LUR point satisfies $\gamma(\X)>1$. As a consequence, every infinite-dimensional normed space is isomorphic to a normed space $\Y$ with $\gamma(\Y)>1$ and $K(\Y)=2$.
\end{mainth}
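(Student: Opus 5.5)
The plan is to argue by contradiction: assume $\gamma(\X)=1$, so that for every $r>1$ there is a $2$-separated set $\Lambda\subseteq\X$ (i.e.\ $\|\lambda-\mu\|\geq 2$ for distinct $\lambda,\mu\in\Lambda$) whose $r$-balls cover $\X$. I will then find a point that is at distance $>r$ from every element of $\Lambda$. The argument needs $\dim\X\geq 2$, which I take to be implicit in the statement: in $\R$ the point $1$ is LUR and yet $\gamma(\R)=1$, via $\Lambda=2\ZZ$.

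Fix an LUR point $x_0\in S_\X$ and a norming functional $f\in S_{\X^*}$ with $f(x_0)=1$. I will use the quantitative form of LUR: for every $\delta>0$ there is $\eta>0$ such that $\|x\|\leq 1$ and $\|x+x_0\|>2-\eta$ imply $\|x-x_0\|<\delta$. By translation invariance assume $0\in\Lambda$.

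\emph{Step 1: there is a lattice point $\mu\approx 2x_0$.} Take $t$ slightly larger than $r$ and $y=tx_0$. Then $y$ is not covered by $0$, so some $\mu\in\Lambda\setminus\{0\}$ has $\|y-\mu\|\leq r$. Also $\|\mu\|\geq 2$, hence $\|\mu-y\|\leq r$ while $\|y+(\mu-y)\|\geq 2$. Applying LUR to $x=(\mu-y)/r$ (after absorbing the errors $r-1$ and $t-1$) gives $\mu-y\approx x_0$, so $\mu\approx 2x_0$. Consequently $m:=\mu/2$ satisfies $\|m\|\geq 1$ and $\|m-x_0\|\leq \e(r)$, where $\e(r)\to 0$ as $r\to 1^+$.

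\emph{Step 2: a hole between $0$ and $\mu$.} Since $\dim\X\geq2$, pick $h\in\ker f$ with $\|h\|=\delta$, where $\delta>0$ is small and fixed. Set $y=m+h$; I claim $\|y\|>r$ and $\|\mu-y\|=\|m-h\|>r$.
\begin{itemize}
\item Suppose $\|m+h\|\leq r$. Then $x=(m+h)/r\in B_\X$ and $f(x+x_0)=f(m)/r+1\geq 2-O(\e(r)+r-1)$, so $\|x+x_0\|$ is close to $2$.
\item LUR then gives $\|x-x_0\|<\delta/3$. Together with $m\approx x_0$ this forces $\|h\|<\delta$, a contradiction once $r$ is close enough to $1$ in terms of $\eta(\delta/3)$.
\item The same argument applies to $m-h$.
\end{itemize}
Finally, any other $\lambda\in\Lambda\setminus\{0,\mu\}$ satisfies $\|\lambda-y\|\geq 2-\|y\|\geq 2-(1+\e(r)+\delta)>r$ for small $\delta$ and $r$ close to $1$. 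So $y$ is uncovered, which is a contradiction. Hence $\gamma(\X)\geq r_0>1$, with $r_0$ depending only on the LUR modulus at $x_0$.

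The main obstacle is Step 2. Choosing the perturbation $h$ in $\ker f$, rather than in an arbitrary direction, is what makes both $m+h$ and $m-h$ lie outside the $r$-balls at the same time. The bookkeeping of the errors (approximating $x_0$ by $m$, and $r$ by $1$) against $\eta(\delta)$ must be done carefully, but it is routine.

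For the consequence, write $\X=\R\oplus\Z$ with $\Z$ a closed hyperplane, and renorm $\Z$ so that $K(\Z)=2$. For this I would invoke the known fact that every infinite-dimensional Banach space admits an equivalent norm with Kottman constant $2$, which can be built by making a normalised basic sequence $\ell_\infty$-like in the new norm. Then let $\Y=\R\oplus_2\Z$. A direct computation shows that $(1,0)$ is an LUR point of $B_\Y$, so the first part gives $\gamma(\Y)>1$. Moreover, $K(\Y)\geq K(\Z)=2$ because $\Z$ embeds isometrically in $\Y$ and the Kottman constant is monotone under passing to subspaces; since $K\leq 2$ always, $K(\Y)=2$.
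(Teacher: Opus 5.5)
Your Step~1 and the two-ball part of Step~2 are correct. Your caveat that $\dim\X\geq 2$ is needed is also correct: for $\X=\R$ the statement fails. The paper's proof tacitly uses $\dim\X\geq 2$ when it picks $z'\in\partial B_0'\cap\partial B_1'$, and this intersection can be empty in $\R$.

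\textbf{The gap.} It is in the last line of Step~2, where the real difficulty of the theorem lies. From $\|\lambda\|\geq 2$ and $\|y\|\leq 1+\e(r)+\delta$ you only get $\|\lambda-y\|\geq 1-\e(r)-\delta$, which is less than $1<r$. So the inequality $2-(1+\e(r)+\delta)>r$ is false. Nothing you have said prevents a third point of $\Lambda$ from covering $y$. Producing a point outside the $r$-balls centred at $0$ and $\mu$ is therefore not yet a contradiction. Excluding this third ball is exactly what the paper uses \Cref{lemma: 3LUR balls} for: three mutually non-overlapping unit balls cannot all have boundary points close to an LUR point of one of them. Your final step uses neither the separation $\|\lambda-\mu\|\geq 2$ nor the LUR property, and without them the claim cannot be proved.

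\textbf{How to close it.} In your symmetric configuration the missing step is short.
Suppose $\|\lambda-y\|\leq r$ and set $u\coloneqq(\lambda-y)/r\in B_\X$.
\begin{itemize}
\item Note that $\lambda-\mu=ru-(m-h)$, and that both $\|y-x_0\|$ and $\|(m-h)-x_0\|$ are at most $\e(r)+\delta$.
\item The separation conditions $\|y+ru\|\geq 2$ and $\|(m-h)-ru\|\geq 2$ then give $\|x_0+u\|\geq 2-\eta_0$ and $\|x_0-u\|\geq 2-\eta_0$, where $\eta_0\coloneqq\e(r)+\delta+(r-1)$.
\item Choose $\delta$, and then $r-1$, small enough that also $\eta_0<\min\{\eta(1/2),1\}$.
\item LUR applied to the first inequality gives $\|u-x_0\|<1/2$, which contradicts the second inequality.
\end{itemize}

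With this addition, which is a special case of the three-balls lemma, your argument is complete. It is a more explicit route than the paper's. The paper finds the uncovered point via inflated balls, \Cref{fact: protected-lur}, and a half-line through a common boundary point (\Cref{claim: 3rd close ball}). Your choice of $h\in\ker f$ puts both $y$ and $\mu-y$ near $x_0$, and that is what reduces the third-ball exclusion to a single LUR estimate. Your renorming part coincides with \Cref{cor: renorming counterexample}.
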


Interestingly, there are examples of rotund, octahedral normed spaces; see, \emph{e.g.}, \cite{DPS_MLUR}*{Example~3.12} for an example based on a renorming of $\ell_1$. As a consequence of our results (\Cref{mth: gamma}\eqref{mth.i: octa} below), such spaces satisfy $\gamma^*(\X) =1$. Therefore, the assumption on the LUR point cannot be replaced by, say, mere rotundity of the unit ball.

It follows from a (more general) result of Vesel\'y and the first-named author \cite{DEVEtiling}*{Theorem~4.9} that a normed space $\X$ such that $B_\X$ admits a LUR point does not admit a tiling by balls of radius one. Our result offers a generalisation of this fact, as the condition that $\gamma(\X)>1$ means that all packings of $\X$ are `far' from being a tiling. Here we should perhaps notice that every normed space $\X$ that admits a tiling by unit balls clearly satisfies $\gamma(\X) =1$, but the converse does not hold, as we mention below.

Once it has been clarified that the computation of the packing constants $\gamma(\X)$ and $\gamma^*(\X)$ cannot be simply reduced to that of $K(\X)$, we face the essentially unexplored line of investigation to determine these constants for infinite-dimensional normed spaces. In the second, and main, part of the paper, we present several results in this direction, that provide information on all classical Banach spaces, in most cases giving exact computations. An outline of our most significant contributions is given in the following theorem.

\begin{mainth}\label{mth: gamma} Let $\X$ be a normed space and $\kappa$ be an infinite cardinal.
\begin{enumerate}[label=\rm{(\Roman*)}, ref=\Roman*]
    \item\label{mth.i: octa} $\gamma^*(\X)=1$ whenever $\X$ is a separable octahedral normed space (\Cref{thm: octahedral}), or $\X=\C(\K)$ where $\K$ is zero-dimensional (\Cref{thm: C(K)}).
    \item\label{mth.i: ell_p} If $\dens(\X)< \kappa$, then
    \[ \gamma(\ell_p(\kappa) \oplus_r \X)= \gamma^*(\ell_p(\kappa) \oplus_r \X)= \frac{2}{2^{1/p}} \]
    for every $1\leq r\leq p< \infty$ (\Cref{thm: gamma ell_p}).
    \item\label{mth.i: L_p} If $(\M, \Sigma, \mu)$ is any measure space and $\dens(\X)< \dens(L_p(\mu))$,
    \[ \min\left\{\frac{2}{2^{1/p}}, \frac{2}{2^{1/q}}\right\}\leq \gamma(L_p(\mu)\oplus_r \X)\leq \gamma^*(L_p(\mu)\oplus_r \X)\leq \frac{2}{2^{1/p}}, \]
    for every $1\leq r\leq p< \infty$, where $q$ is the conjugate exponent of $p$ (\Cref{thm: gamma L_p}).
    \item\label{mth.i: UC} If $\X$ is a super-reflexive space of density $\kappa$
    \[ \frac{1}{1-\delta_\X(1)}\leq \frac{2}{K(\X;\kappa)}\leq \gamma(\X)\leq \gamma^*(\X)\leq \frac{2}{1+\p_\X(1)}, \]
    where $\delta_\X$ is the modulus of convexity and $\p_\X$ is the tangential modulus of convexity, defined in \Cref{def: phi_X} (\Cref{thm: superreflexive and not}).
    \item\label{mth.i: gamma =2} There exist Banach spaces $\X$ (of density $\omega_\omega)$ such that $\gamma(\X)=2$. Additionally, such spaces can be taken to be reflexive, or octahedral (\Cref{thm: gamma=2}).
\end{enumerate}
\end{mainth}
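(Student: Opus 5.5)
This theorem collects five separate results, each proved later in the paper. I would organise the argument as five independent pieces, each resting on a few general tools.

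The first tool is a ``product'' or ``lattice extension'' lemma. If $\Lambda$ is a discrete subgroup of a subspace, or of a quotient, realising a good packing, then adding an orthogonal-type complement direction by direction yields a lattice in the whole space. The second tool is a lower-bound principle. It refines the argument of \Cref{prop: CPZ gamma>=2/K}: if $\{x_\lambda+B_\X\}$ is a packing whose $r$-inflation covers $\X$, then for $r$ small the centres are too sparse. Concretely, one finds a point $y$ with $\|y-x_\lambda\|\geq r$ for all $\lambda$ by working inside a subspace of density larger than the relevant number of centres.

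For \eqref{mth.i: octa}, the octahedral case goes as follows.
\begin{itemize}
\item Fix a dense sequence $(z_n)$.
\item Build inductively vectors $e_n$ with $\|e_n\|=2$ such that $\|v+e_n\|\approx\|v\|+2$ on the span of the previous vectors.
\item Arrange that the $e_n$ also approximate the $z_n$ after small corrections.
\item Let $\Lambda$ be the subgroup they generate. Octahedrality forces $\|\lambda\|\geq 2$ for $\lambda\in\Lambda\setminus\{0\}$, which gives the packing.
\item Density plus a Cauchy-type telescoping gives that every point lies within $1+\e$ of $\Lambda$.
\end{itemize}
Reaching exactly $1$ needs $\e\to 0$ along the construction. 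Since $\gamma^*$ is an infimum, it suffices to get $\gamma^*(\X)\leq 1+\e$ for every $\e>0$, plus attainment if it is defined as a minimum.

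For $\C(\K)$ with $\K$ zero-dimensional, I would take $\Lambda$ to be the set of $2\ZZ$-valued continuous functions. These are locally constant, hence generated by characteristic functions of clopen sets. Distinct elements of $\Lambda$ differ by at least $2$ at some point, so the balls pack. For covering, any $f$ can be uniformly approximated by locally constant functions. Rounding to the nearest even integer on each clopen piece of a fine partition gives $\|f-\lambda\|\leq 1+\e$. Obtaining exactly $1$ requires choosing the partition so that $f$ oscillates little on each piece, and rounding carefully.

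For \eqref{mth.i: ell_p}, the upper bound comes from the lattice of \cite{DRShilbert}*{Corollary 3.4} in $\ell_p(\kappa)$. I would combine it with a density-$<\kappa$ trick: embed a dense subset of $\X$ into the $\kappa$ coordinates, so that each $\X$-component is absorbed by a shift in a fresh coordinate block. The lower bound is where $\dens(\X)<\kappa$ enters. Given a packing, fewer than $\kappa$ centres can have a large $\X$-part interacting with any given point. So one can choose $y=(u,0)$ with $u$ supported on a coordinate disjoint from the supports of the relevant centres. Then $\|y-x\|^p\approx \|u\|^p+\|x\|^p$, and the usual Swanepoel computation gives $2/2^{1/p}$. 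The condition $r\leq p$ ensures the $\oplus_r$-norm does not improve this estimate.

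For \eqref{mth.i: L_p}, I would use that $L_p(\mu)$ contains $\ell_p(\kappa)$ complemented, which gives the upper bound via \eqref{mth.i: ell_p}. The lower bound comes from Clarkson-type inequalities $\|f+g\|^s+\|f-g\|^s\geq 2(\ldots)$ with $s=\max(p,q)$, applied to an almost disjoint or almost independent family of density $\kappa$. For \eqref{mth.i: UC}, the lower bound is \Cref{prop: CPZ gamma>=2/K} refined via $K(\X;\kappa)$ together with the standard estimate $K\leq 2(1-\delta_\X(1))$. The upper bound uses a Rogers-type transfinite greedy construction of a lattice, whose separation step is controlled by $\p_\X(1)$.

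For \eqref{mth.i: gamma =2}, I expect the main obstacle. I would take an $\ell_p$-sum, or an octahedral modification, of spaces $\ell_{p_n}(\omega_n)$ with $p_n\to\infty$ suitably arranged. Each cardinal level must then force $\gamma\geq 2-\e_n$, using the lower-bound principle at density $\omega_n$, since fewer than $\omega_n$ centres are relevant at each level. The delicate point is making the level-by-level lower bounds survive in the sum, so that the bound holds for the sum itself and not just for its pieces.
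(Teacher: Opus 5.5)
There are genuine gaps in four of the five items.

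\textbf{Item (I): the octahedral construction fails.} If each generator $e_n$ has norm $2$ and is almost $\ell_1$-orthogonal to the span of its predecessors, then $\|\sum_i c_ie_i\|\gtrsim 2\sum_i|c_i|$ for all real $c_i$. So the group is essentially $2\ZZ^{(\N)}\subseteq\ell_1$, and $\frac12(e_1+\dots+e_N)$ lies at distance about $N$ from it. Nor can $e_n$ also be within $1$ of a target $z_n$: if $z_n\in\spn\{e_1,\dots,e_{n-1}\}$, then $\|e_n-z_n\|\geq(1-\e)(\|z_n\|+2)$.

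The missing idea (\Cref{thm: subgroup theta}) is to add the generator $u_\beta-x_\beta$ instead. Here $u_\beta$ is the next point of a dense set with $\dist(u_\beta,\D_{<\beta})>1$, and $x_\beta\in S_\X$ satisfies $\dist(x_\beta,S_\Z)\geq\theta$ for $\Z=\overline{\spn}(\D_{<\beta}\cup\{u_\beta\})$. Separation is then checked in two cases. For $k=1$, use convexity of $t\mapsto\|x_\beta-tw\|$ with $w=(d+u_\beta)/\|d+u_\beta\|$ and the fact that $\|d+u_\beta\|>1$. For $|k|\geq2$, use $\dist(x_\beta,\Z)\geq\theta/2$. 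Octahedrality gives $\theta=2-\e$.

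The same construction yields every upper bound in (II)--(IV), with $\theta=(1-\e)(1+\phi(1))$, for $(<\kappa)$-$\phi$-octahedral spaces of density $\kappa$. In (II) one needs $\|(z_1+x,z_2)\|_{\oplus_r}\geq(1-\e)(\|(z_1,z_2)\|^p+1)^{1/p}$ for a fresh unit vector $x\in\ell_p(\kappa)$. This is exactly where $r\leq p$ enters, via a Minkowski-type inequality. The lower bound $2/2^{1/p}$ holds for every $r$, so you have attached $r\leq p$ to the wrong bound.

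Your $\C(\K)$ argument is the paper's. Exact $1$-density of $\C(\K;2\ZZ)$ is false in general (e.g.\ in $c$), but it is not needed.

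\textbf{Item (III): the upper bound fails.} $L_p(\mu)$ need not contain $\ell_p(\kappa)$ at all: by Enflo--Rosenthal it does not when $\mu$ is the product measure on $\{-1,1\}^{\omega_1}$ and $p\neq2$. Moreover, a good lattice in a complemented subspace says nothing about $\gamma^*$ of the whole space; for instance $\gamma^*(\ell_1)=1<\gamma(\ell_1\oplus_2\R)$. In any case (II) needs the complement to have density $<\kappa$.

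What is needed is that $L_p(\mu)$ itself is $(<\kappa)$-$\phi_p$-octahedral. Reduce via Maharam's theorem to $L_p(\{-1,1\}^\kappa)$, and use that every function depends on countably many coordinates. Then take $g=2^{n/p}\bone_{A_\sigma}$ for a cylinder $A_\sigma$ over $n$ fresh coordinates, so that $\|f\bone_{A_\sigma}\|=2^{-n/p}\|f\|$ for all $f\in\Z$.

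\textbf{Item (IV): the refinement to $K(\X;\kappa)$ is the crux.} Your step ``\Cref{prop: CPZ gamma>=2/K} refined via $K(\X;\kappa)$'' is not a refinement of that argument, which only yields infinitely many centres near a singular point. In fact $\gamma(\X)\geq 2/K(\X;\dens(\X))$ is false in general: $\gamma(c_0(\omega_1))=1$ while $2/K(c_0(\omega_1);\omega_1)=2$ (\Cref{rmk: CPZ for kappa}).

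The missing argument is the deep-hole step of \Cref{thm: gamma of phi-octa}. Let $R$ be the optimal covering radius of a $2$-separated set $\D$, and let $0$ be at distance $>R-\e$ from $\D$. Then $\mathcal{Q}=\D\cap(R+\e+\delta)B_\X$ has cardinality at least $\kappa$. Otherwise a unit vector $x$ that is $(\phi,\e)$-orthogonal to $\overline{\spn}\,\mathcal{Q}$ puts $\delta x$ at distance $>R+\e$ from all of $\D$. Hence $2\leq(R+\e+\delta)K(\X;\kappa)$.

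For uniformly convex $\X$, the orthogonal vectors needed for both bounds come from the tangential modulus. Take an $\e$-net $W$ of $S_\Z$ of cardinality $<\kappa$ and $f_w\in\J_\X(w)$; any unit vector in $\bigcap_{w\in W}\ker f_w$ works. General super-reflexive spaces then require approximation by uniformly convex renormings. This needs Banach--Mazur continuity of $\gamma$, $K(\cdot;\kappa)$ and $\p_\X(1)$.

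\textbf{Lower bounds in (II) and (III).} These are the same argument plus pigeonholing the small summand along a regular $\kappa_0$ with $\dens(\X)<\kappa_0\leq\kappa$ (\Cref{fact: K with a small summand}). The numerical value comes from an upper bound on the Kottman constant, $K(L_p(\mu))\leq\max\{2^{1/p},2^{1/q}\}$. One must control every large $2$-separated family of centres near a deep hole, not compute with one chosen independent family.

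\textbf{Item (V): the ``delicate point'' you leave open is the whole proof.} Split $\X_p$ into the head $\bigoplus_{k<n}$ and the tail $\bigoplus_{k\geq n}$. The head has density $\omega_{n-1}<\omega_n=\cf(\omega_n)$. The tail is $(<\omega_n)$-$(\phi_p\circ\phi_{p_n})$-octahedral. It remains to prove $K(\text{tail};\omega_n)\leq 2^{1/M}$ when $p_k\geq M$ for all $k\geq n$. After a perturbation making supports finite, infinitely many of $\omega_n$ separated vectors are supported in a single block $[n,N]$. That block has Kottman constant $\max_{n\leq k\leq N}2^{1/p_k}\leq 2^{1/M}$.
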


We now comment on how this result provides information on all classical Banach spaces. To begin with, \eqref{mth.i: ell_p} generalises the results from \cites{DRShilbert, Swanepoel} that determined $\gamma(\ell_p(\kappa))$ and $\gamma^*(\ell_p(\kappa))$. Further, it also leads to more counterexamples to \Cref{probl: gamma=2/k} (\Cref{ex: octa gamma>1}). The analogue result \eqref{mth.i: L_p} for all $L_p(\mu)$ spaces leads us to the precise computation of the constants only in the case when $p\leq 2$, because when $p>2$ the Kottman constant of $L_p(\mu)$ equals $2^{1/q}$ if $\mu$ is not purely atomic; thus, the lower bound is weaker in this case. Concerning $\C(\K)$ spaces, it follows in particular from \eqref{mth.i: octa} that $\gamma^*(\C(\K)) =1$ when the compact $\K$ is zero-dimensional (in particular, scattered) or a perfect metric space. Hence, this applies to, \emph{e.g.}, $\C([0,\alpha])$ for all ordinals $\alpha$, $\C([0,1])$, and $\C(2^\omega)$.

While these results cover $\C(\K)$ and $L_p(\mu)$ spaces, namely all classical spaces, it is to be remarked that all these results are intrinsically isometric and none of them applies to renorming of these spaces. In other words, the renorming theory is almost completely unexplored, with only two results known. The first is \Cref{prop: renorm ell_2} showing that there is a norm $\nn\cdot$ on $\ell_2$ such that $\gamma^*((\ell_2,\nn\cdot)) =1$; however, we don't know if it is possible to achieve $\gamma^*((\ell_2,\nn\cdot)) >\sqrt{2}$ (\Cref{probl: renorm ell2}). The second is that it is always possible to get $\gamma(\X) >1$, by adding a LUR point in $B_\X$, by \Cref{mth: LUR point}.

Moreover, \eqref{mth.i: octa} yields a large class of Banach spaces that satisfy $\gamma^*(\X) =1$, beside the cases of $\ell_1(\kappa)$ known from \cite{DRShilbert} and $\C([0,1])$ mentioned above. In fact, standard examples of octahedral Banach spaces also include, for instance, $L_1$, $\C(\K)$ when $\K$ is perfect, and all Banach spaces with the Daugavet property. Further, octahedrality has been studied in Lipschitz-free spaces \cite{octa_free} and their duals \cite{octa_Lip}, in spaces of operators \cite{octa_operator}, or in tensor products \cite{octa_tensor}. For information on octahedral Banach spaces, we refer, \emph{e.g.}, to \cites{octa_slice, octa_bidual, octa_Daugavet}. As it follows from \eqref{mth.i: gamma =2}, the separability assumption in the octahedral case of \eqref{mth.i: octa} is essential; there even are octahedral spaces of density $\omega_1$ for which $\gamma(\X) >1$, such as $\ell_1\oplus_1 \ell_p(\omega_1)$ (\Cref{ex: octa gamma>1}). However, while, for the sake of simplicity, we only mentioned octahedrality in \eqref{mth.i: octa}, most of our results actually concern its generalisation to larger cardinals, namely $(<\kappa)$-octahedrality, introduced in \cite{CLL}, which gives rise to more examples (\Cref{rmk: <kappa octa examples}). For instance, as we prove in \Cref{prop: L_p p-octa}, every $L_1(\mu)$ space of density $\kappa$ is $(<\kappa)$-octahedral (this is possibly a folklore fact, but no proof appears in the literature).

Among all the results in \Cref{mth: gamma}, the one that we consider to be the most interesting and surprising is \eqref{mth.i: gamma =2}. In fact, the validity of the inequality $\gamma(\X)\leq 2$ for all normed spaces is just obtained by considering any maximal packing, without any need for geometric considerations. Similarly, the fact that $K(\X)\geq 1$ for all infinite-dimensional spaces is just a direct application of Zorn lemma (for each $\e>0$, a maximal $(1-\e)$-separated set in $B_\X$ has cardinality $\dens(\X)$). In the case of Kottman constant, a deep result of Elton and Odell \cites{EltonOdell} is that $K(\X) >1$ for all infinite-dimensional spaces. From this perspective, it would have been tempting to speculate that careful geometric arguments, likely combined with combinatorial considerations, could prove that $\gamma(\X)< 2$ for all spaces. Because of the inequality $\gamma(\X)\geq \frac{2}{K(\X)}$, this would have actually been an improvement of the Elton--Odell theorem. Incidentally, by the very same argument, proving that $\gamma(\X)< 2$ for all separable spaces $\X$ (\Cref{probl: separable gamma=2}) would yield a novel proof of the Elton--Odell theorem (and because of the same inequality, spaces as in \eqref{mth.i: gamma =2} are again counterexamples to \Cref{probl: gamma=2/k}). However, \eqref{mth.i: gamma =2} shows that there exist spaces where it is impossible to produce packings that are more dense than the `random' packing offered by Zorn lemma. A different perspective on this fact is that in spaces as in \eqref{mth.i: gamma =2} each packing can be modified into an optimal one, just by adding more balls. Intriguingly, such spaces can also be taken to be octahedral, or reflexive; however, we don't know if there are super-reflexive such spaces (\Cref{probl: separable gamma=2}) and it follows from \eqref{mth.i: UC} that there aren't uniformly convex ones.

We should also remark that, in a few cases, namely $\C(\K)$ spaces with $\K$ extremally disconnected (\Cref{thm: C(K)}) and $L_\infty(\mu)$ spaces (\Cref{prop: L_infty}), our argument proving that $\gamma^*(\X) =1$ even yields a lattice tiling by balls. However, differently from the finite-dimensional case, the assumption that $\gamma(\X) =1$ (resp.~$\gamma^*(\X) =1$) is weaker than the presence of a (lattice) tiling by balls of radius one, as we shall discuss in \cite{DRSS_ell1}.

To conclude our explanation of the novelty of \Cref{mth: gamma}, we stress that while our results significantly advance the state of the art and yield information on all classical Banach spaces, it is plain that they leave a wide area for further investigation. In \Cref{sec: problem} we indicate a sample of natural problems that stem from our results.

We now describe the strategy to prove \Cref{mth: gamma} and the organisation of the second part of the paper. As it turns out, all our results in \Cref{mth: gamma} follow from two general methods to estimate the constants $\gamma(\X)$ and $\gamma^*(\X)$ and we consider the distillation of these methods to be the most significant outcome of our paper. All the upper bounds we obtain follow from a general procedure to construct discrete subgroups of normed spaces, that largely generalises the construction performed in \cite{DRShilbert} to show that $\gamma^*(\X)\leq 2$ for all infinite-dimensional normed spaces. In turn, this argument borrowed an idea from Klee's seminal paper \cite{Klee_MathAnn}, which has influenced several other results, such as those in \cites{Klee2, DEVEtiling, KleeMalZan, Swanepoel}. We shall present this general construction in \Cref{sec: subgroup}, where we also prove \Cref{mth: gamma}\eqref{mth.i: octa}, which only requires this upper bound. 

Instead, the general lower bound, that we present in \Cref{sec: phi-octahedral}, requires the introduction of the notion of $\phi$-octahedral normed space, where $\phi$ is a modulus. Intuitively speaking, we replace the linear growth in the `orthogonal' direction with the prescription of a growth lower bounded by the modulus $\phi$. While the definition is formally very similar to that of octahedrality (which gave us the inspiration for the terminology), the properties of $\phi$-octahedral normed space can differ quite significantly from octahedral ones; to wit, uniformly convex spaces are $\phi$-octahedral (\Cref{thm: UC implies phi-octa}). After some basic results on $\phi$-octahedral normed space and some results on stability under direct sums, the main result of the section is \Cref{thm: gamma of phi-octa}, where we prove the above-mentioned lower bound. Beside the crucial role that the result bears for the paper, it intrinsic interest also stems from the fact that it generalises the result of Casini, Papini, and Zanco \cite{CPZ} that we mentioned above (see also \Cref{rmk: CPZ for kappa}).

While \Cref{sec: phi-octahedral} also contains the proof of \Cref{mth: gamma}\eqref{mth.i: gamma =2}, our applications to uniformly convex and, in particular, $L_p(\mu)$ spaces are presented in the subsequent \Cref{sec: UC and Leb}. Among others, there we introduce a variant $\p_\X$ of a modulus of convexity studied by Milman, which we require in order to prove that uniformly convex spaces $\X$ are $\p_\X$-octahedral. Finally, the last section of the paper (\Cref{sec: problem}) is dedicated to the discussion of some possible directions that is natural to consider after our research.

\section{Preliminaries}\label{sec: prelim}
For a real normed space $\X$, $B_\X$ and $S_\X$ denote the closed unit ball and the unit sphere of $\X$ respectively. We denote by $B(x,r)$ the closed ball with radius $r>0$ and centre $x$. The \emph{density character} (or just \emph{density)} $\dens(\X)$ of $\X$ is the smallest cardinality of a set with dense span in $\X$. In particular, which is perhaps not entirely standard, for us $\dens(\X)< \omega$ means that $\X$ is finite dimensional. The cardinality of a set $S$ is indicated by $|S|$. We regard cardinal numbers as initial ordinal numbers; hence, we write $\omega$ for the first infinite cardinal $\aleph_0$ and $\omega_n$ for the cardinal $\aleph_n$ ($n\geq 1$).

\subsection{Convexity, packings, and moduli}\label{sec: convex}
A \emph{convex body} is a closed convex set with non-empty interior. A \emph{packing} in $\X$ is a collection of mutually non-overlapping convex bodies (two convex bodies are \emph{non-overlapping} if they have disjoint interiors). A \emph{tiling} is a packing that is also a covering for $\X$. A packing (resp.~a tiling) $\F$ is \emph{lattice} if there are a convex body $C$ and a discrete subgroup (sometimes called a lattice) $\Lambda$ in $\X$ such that $\F= \{C+\lambda\}_{\lambda\in \Lambda}$. Throughout the paper, we will only consider packings or tilings by translates of $B_\X$. A point $x\in \X$ is a \emph{singular point} for a family $\F$ of convex bodies if every neighbourhood of $x$ intersects infinitely many elements of $\F$. 

For a normed space $\X$, the \emph{modulus of convexity} $\delta_\X$ is defined by
\[ \textstyle \delta_\X(\e)\coloneqq \inf\left\{1-\left\|\frac{x+y}2 \right\|\colon\, x,y\in B_\X,\, \|x-y\|\geq\e \right\}, \qquad \e\in[0,2]. \]
Moreover, given $x_0\in S_\X$, the \emph{modulus of local uniform rotundity} at $x_0$ is defined as 
\[ \textstyle \delta_\X(x_0,\e)\coloneqq \inf\left\{1-\|\frac{x_0+y}2 \|\colon\, y\in B_\X,\, \|x_0-y\|\geq\e \right\}, \qquad \e\in[0,2]. \]
Then, $\X$ is uniformly convex if and only if $\delta_\X (\e)>0$ for each $\e\in (0,2]$, and $x_0$ is a LUR (locally uniformly rotund) point for $B_\X$ if and only if $\delta_\X (x_0,\e)>0$ for each $\e \in(0,2]$. It is well-known that the value of $\delta_\X(\e)$ is not affected if one only considers $x,y$ such that $x,y\in S_\X$ and/or $\|x-y\|=\e$; further, the function $\e\mapsto \delta_\X(\e)$ is continuous on $[0,2)$. Perhaps less well-known is the fact that a similar statement holds also for the modulus of local uniform rotundity, \cite{danes}: in the definition of $\delta_\X(x_0,\e)$ one can equivalently consider $y\in S_\X$ and/or $\|x-y\|=\e$. As a consequence of this fact, we readily deduce that $\delta_\X(x_0,\e)\leq \e/2$ for all $x_0\in \X$ and $\e>0$. For more information on these moduli, we refer, \emph{e.g.}, to \cite{LiTzaII}; see also \cites{DEVEmoduli, DEVEmodulilocal} for an extension of the previous results to convex bodies. Let us also recall here two important results that we require. The first is Nordlander inequality \cite{Nordlander} that, for all normed spaces of dimension at least $2$,
\[ \delta_\X(t)\leq 1- \sqrt{1-t^2/4}, \]
with equality if and only if $\X$ is a Hilbert space. The second, due to James \cite{James}, is that $\X$ is uniformly non-square, hence super-reflexive, if $\delta_\X(\e)>0$ for some $\e<2$.

\subsection{Kottman constants}\label{sec: Kottman}
A subset $\P$ of a normed space $\X$ is \emph{$r$-separated} if $\|x-y\|\geq r$ for all distinct $x,y\in \P$. The \emph{Kottman constant} $K(\X)$ \cite{Kottman} of $\X$ is
\[ K(\X)\coloneqq \sup\{r>0\colon B_\X \text{ contains an infinite $r$-separated set} \}. \]

By the celebrated Elton--Odell theorem \cite{EltonOdell}, $K(\X)>1$ for any normed space $\X$. Further, $K(\X)=2$ if $\X$ contains a subspace isomorphic to $c_0$ or $\ell_1$ (by James' distortion theorem), and $K(\ell_p(\kappa))= 2^{1/p}$, for every infinite cardinal $\kappa$ and every $1\leq p< \infty$. Another standard fact is that the Kottman constant of a finite $\ell_p$-sum of normed spaces is the maximum of the Kottman constants of the factors. For references on these facts and more information, we refer, \emph{e.g.}, to \cites{CGKP, CGP, HKR_JFA, R_RACSAM}. For our results, it will also be important to quantify separation of uncountable subsets of $B_\X$. Therefore, we introduce the following definition.

\begin{definition} Let $\X$ be a normed space and $\kappa$ a cardinal number. The \emph{$\kappa$-Kottman constant} $K(\X;\kappa)$ of $\X$ is
\[ K(\X; \kappa)\coloneqq \sup\{r>0\colon B_\X \text{ contains an $r$-separated set of cardinality }\kappa \}. \]
\end{definition}

For information on uncountable separated sets, we refer to \cites{DESOVEstar, HKR_TAMS, Kosz_Jussieu, Kosz_TAMS} and references therein. The following basic facts will be needed in what follows.

\begin{fact}\label{fact: K with a small summand} Let $\kappa$ be an infinite cardinal and $\X$ and $\Y$ be normed spaces, where $\dens(\Y)< \cf(\kappa)$. Then, $K(\X\oplus_p \Y;\kappa)= K(\X;\kappa)$ for all $p\in[1,\infty]$.
\end{fact}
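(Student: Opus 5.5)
We prove the two inequalities $K(\X\oplus_p\Y;\kappa)\geq K(\X;\kappa)$ and $K(\X\oplus_p\Y;\kappa)\leq K(\X;\kappa)$ separately. The first is immediate: the embedding $x\mapsto (x,0)$ is an isometry of $\X$ into $\X\oplus_p\Y$ that maps $B_\X$ into $B_{\X\oplus_p\Y}$. Hence every $r$-separated subset of $B_\X$ of cardinality $\kappa$ is carried to an $r$-separated subset of $B_{\X\oplus_p\Y}$ of the same cardinality.

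For the reverse inequality, fix $r< K(\X\oplus_p\Y;\kappa)$ and an $r$-separated set $\{(x_\alpha,y_\alpha)\}_{\alpha<\kappa}$ in the unit ball of the sum. Fix also $\e>0$. Let $D$ be a dense subset of $B_\Y$ with $|D|=\dens(\Y)$; when $\Y$ is finite dimensional, replace $D$ by a finite $\e$-net of $B_\Y$. In every case $|D|<\cf(\kappa)$, and $D$ is an $\e$-net of $B_\Y$. Assign to each $\alpha$ some $d_\alpha\in D$ with $\|y_\alpha-d_\alpha\|\leq\e$. Since $\kappa$ is split into fewer than $\cf(\kappa)$ classes, some class $A\subseteq\kappa$ has $|A|=\kappa$. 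This is the standard pigeonhole for cardinals: a union of fewer than $\cf(\kappa)$ sets each of size $<\kappa$ has size $<\kappa$. This is the only point where the hypothesis $\dens(\Y)<\cf(\kappa)$ is used, and it is the main step; the rest is bookkeeping.

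Within the class $A$, any two indices satisfy $\|y_\alpha-y_\beta\|\leq 2\e$. We now compare the separations of the two coordinates.
\begin{itemize}
\item For $p<\infty$, we have $r^p\leq \|x_\alpha-x_\beta\|^p+(2\e)^p$, so $\|x_\alpha-x_\beta\|\geq (r^p-(2\e)^p)^{1/p}\eqqcolon r_\e$.
\item For $p=\infty$, we get $\|x_\alpha-x_\beta\|\geq r$ directly once $2\e<r$.
\end{itemize}
In both cases $\{x_\alpha\}_{\alpha\in A}$ consists of $\kappa$ distinct points of $B_\X$ (since $\|x_\alpha\|\leq\|(x_\alpha,y_\alpha)\|\leq 1$) and is $r_\e$-separated. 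Consequently $K(\X;\kappa)\geq r_\e$.

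Finally let $\e\to0$, so that $r_\e\to r$, and then let $r\to K(\X\oplus_p\Y;\kappa)$. This gives $K(\X;\kappa)\geq K(\X\oplus_p\Y;\kappa)$, which completes the proof.
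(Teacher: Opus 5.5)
Your proof is correct and follows essentially the same route as the paper. In both, the second coordinates are pigeonholed into fewer than $\cf(\kappa)$ pieces of diameter at most $2\e$, which yields $\kappa$ indices whose first coordinates stay almost $r$-separated in $B_\X$. Your use of a finite $\e$-net of $B_\Y$ when $\Y$ is finite dimensional, and the exact $\ell_p$ estimate in place of the paper's cruder $r-2\e$, are if anything slightly more careful than the paper's covering of all of $\Y$ by $\dens(\Y)$-many balls.
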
 

\begin{proof} Clearly, we only need to prove the `$\leq$' inequality. Let $((x_\alpha,y_\alpha))_{\alpha< \kappa}$ be an injective enumeration of an $r$-separated subset $\P$ of $B_{\X\oplus_p \Y}$. Fix $\e>0$ and write $\Y$ as a union of $\dens(\Y)$-many balls of radius $\e$. Since $\dens(\Y)< \cf(\kappa)$, one such ball must contain $\kappa$-many vectors $y_\alpha$. In other words, up to replacing $\P$ with a subset, still of cardinality $\kappa$, we can assume that $\|y_\alpha- y_\beta\|\leq \e$ for all $\alpha,\beta< \kappa$. Thus, $(x_\alpha)_{\alpha< \kappa}$ is $(r-2\e)$-separated in $B_\X$, whence $r-2\e\leq K(\X;\kappa)$. As $\e$ and $\P$ are arbitrary, we get $K(\X\oplus_p \Y;\kappa)\leq K(\X;\kappa)$.
\end{proof}

\begin{fact}\label{fact: K d_BM cont} For all isomorphic normed spaces $\X$ and $\Y$ and infinite cardinals $\kappa$
\[ K(\X;\kappa)\leq d_{BM}(\X,\Y)\cdot K(\Y;\kappa). \]
\end{fact}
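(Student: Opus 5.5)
We argue directly from the definitions, transporting separated sets through an isomorphism. Let $d\coloneqq d_{BM}(\X,\Y)$; if $d=\infty$ there is nothing to prove (although the spaces are assumed isomorphic, we keep the convention that the right-hand side is then infinite). Fix $\e>0$ and choose an isomorphism $T\colon \X\to\Y$ with $\|T\|\,\|T^{-1}\|\leq d+\e$. Since the quantity $\|T\|\,\|T^{-1}\|$ is invariant under rescaling $T$, we may normalise so that $\|T^{-1}\|=1$ and $\|T\|\leq d+\e$.

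Now take $r< K(\X;\kappa)$ and an $r$-separated set $\P\subseteq B_\X$ of cardinality $\kappa$. Consider the set $\Q\coloneqq \{Tx/\|T\|\colon x\in \P\}$. It is contained in $B_\Y$, and since $T$ is injective it has cardinality $\kappa$. For distinct $x,y\in\P$ we have
\[ r\leq \|x-y\|=\|T^{-1}(Tx-Ty)\|\leq \|Tx-Ty\|, \]
and therefore $\Q$ is $r/\|T\|$-separated, hence $r/(d+\e)$-separated. This gives $r/(d+\e)\leq K(\Y;\kappa)$, that is, $r\leq (d+\e)K(\Y;\kappa)$.

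Letting $r\to K(\X;\kappa)$ and $\e\to0$ yields $K(\X;\kappa)\leq d_{BM}(\X,\Y)\cdot K(\Y;\kappa)$. One edge case needs a remark: if $B_\X$ contains no $r$-separated set of cardinality $\kappa$ for any $r>0$, then $K(\X;\kappa)$ is a supremum over the empty set, and the inequality holds trivially under the convention $\sup\emptyset=0$. There is no real obstacle in this argument; the only point requiring care is the normalisation of $T$, which is what makes the constant come out as exactly $\|T\|\,\|T^{-1}\|$.
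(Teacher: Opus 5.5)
Your proof is correct and is essentially the paper's argument: both normalise an isomorphism $T$ so that $\|x\|\leq\|Tx\|\leq M\|x\|$ and push an $r$-separated set of cardinality $\kappa$ from $B_\X$ into $M\cdot B_\Y$ to get $r\leq M\cdot K(\Y;\kappa)$. Your explicit rescaling by $1/\|T\|$ and the limits $r\to K(\X;\kappa)$, $\e\to 0$ only spell out what the paper leaves implicit when it says that $\P$ and $T$ are arbitrary.
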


\begin{proof} The proof is just the same as for the countable case, which is classical, \cite{Kottman}. However, it is so short that we give it here. Suppose that $\P\subseteq B_\X$ is an $r$-separated set of cardinality $\kappa$ and $T\colon \X\to \Y$ is such that $\|x\|\leq \|Tx\|\leq M\|x\|$. Then, $T(\P)$ is $r$-separated in $M\cdot B_\Y$, hence $r\leq M\cdot K(\Y; \kappa)$. Since $\P$ and the isomorphism $T$ are arbitrary, the result follows. 
\end{proof}

\subsection{The simultaneous packing and covering constant}\label{sec: def gamma}
Since we only consider packings by balls of radius $1$, the condition that the balls in the packing are non-overlapping is equivalent to requiring the centers to be $2$-separated. Further, the condition from the Introduction that inflating the balls by $r$ yields a covering just means that the set of centers is $r$-dense (a set $\P\subseteq \X$ is \emph{$r$-dense} if for each $x\in \X$ there is $p\in \P$ with $\|x- p\|\leq r$, namely, the balls $\{B(p,r) \colon p\in \P\}$ cover $\X$). Hence, the following definition is equivalent to the one from the Introduction.

\begin{definition} The \emph{simultaneous packing and covering constant} $\gamma(\X)$ of $\X$ is 
\[ \gamma(\X)\coloneqq \inf\{r>0\colon \text{there exists a $r$-dense and $2$-separated set } \P\subseteq \X\}. \]

The \emph{lattice simultaneous packing and covering constant} $\gamma^*(\X)$ of $\X$ is
\[ \gamma^*(\X)\coloneqq \inf\{r>0\colon \text{there exists a $r$-dense and $2$-separated subgroup } \P\subseteq \X\}. \]
\end{definition}

Plainly, $\gamma(\X)\geq 1$. Since every maximal $2$-separated set in $\X$ is $2$-dense, Zorn's lemma implies that $\gamma(\X)\leq 2$. This was recently improved to $\gamma^*(\X)\leq 2$ in \cite{DRShilbert}*{Theorem~C(ii)}. Notice that some authors, for instance \cite{Swanepoel}, define packings as collections of mutually disjoint balls; this however doesn't affect the above definition, due to the infimum. Another interpretation is that $\gamma(\X)-1$ is the radius of the largest circular hole present in every packing of $\X$, \cite{Zong_deep}.

In a few instances, especially when constructing discrete subgroups, we find it more convenient to optimise the separation, rather than the density, parameter. In these circumstances we will use the following equalities.
\begin{equation}\label{eq: 2/gamma(X)}
\begin{split}
    \frac{2}{\gamma(\X)}=& \sup\{r>0\colon \text{there exists a $1$-dense and $r$-separated set } \P\subseteq \X \}\\
    \frac{2}{\gamma^*(\X)}=& \sup\{r>0\colon \text{there exists a $1$-dense and $r$-separated subgroup } \P\subseteq \X \}.
\end{split}
\end{equation}

We now give a couple of basic inequalities that we require at various places.
\begin{fact}\label{fact: gamma and d_BM} For all normed spaces $\X$ and $\Y$ one has
\[ \gamma(\X\oplus_\infty \Y)\leq\max\{\gamma(\X), \gamma(\Y)\} \qquad\text{and}\qquad \gamma^*(\X\oplus_\infty \Y)\leq\max\{\gamma^*(\X), \gamma^*(\Y)\}. \]
Further, if $\X$ and $\Y$ are isomorphic,
\[ \gamma(\Y)\leq d_{BM}(\X,\Y)\cdot \gamma(\X) \qquad\text{and}\qquad \gamma^*(\Y)\leq d_{BM}(\X,\Y)\cdot \gamma^*(\X). \]
\end{fact}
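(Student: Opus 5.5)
For the first pair of inequalities I will take product sets. Fix $r>\gamma(\X)$ and $s>\gamma(\Y)$. Choose a $2$-separated, $r$-dense set $\P\subseteq\X$ and a $2$-separated, $s$-dense set $\Q\subseteq\Y$, and consider $\P\times\Q\subseteq \X\oplus_\infty\Y$.

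\emph{Separation.} Let $(p,q)\neq(p',q')$ be two points of $\P\times\Q$. Then $p\neq p'$ or $q\neq q'$, so
\[ \|(p,q)-(p',q')\|_\infty=\max\{\|p-p'\|,\|q-q'\|\}\geq 2. \]

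\emph{Density.} Given $(x,y)$, pick $p\in\P$ with $\|x-p\|\leq r$ and $q\in\Q$ with $\|y-q\|\leq s$. Then the distance from $(x,y)$ to $(p,q)$ is at most $\max\{r,s\}$.

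Taking the infimum over $r,s$ gives $\gamma(\X\oplus_\infty\Y)\leq\max\{\gamma(\X),\gamma(\Y)\}$. In the lattice case, if $\P$ and $\Q$ are subgroups, then $\P\times\Q$ is a subgroup of $\X\oplus_\infty\Y$, so the same argument yields the bound for $\gamma^*$.

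For the Banach--Mazur inequality, fix $d>d_{BM}(\X,\Y)$ and an isomorphism $T\colon\X\to\Y$ with $\|T\|\,\|T^{-1}\|<d$. Rescaling $T$, I may assume $\|x\|\leq\|Tx\|\leq d\|x\|$ for all $x$. Fix $r>\gamma(\X)$ and a $2$-separated, $r$-dense set $\P\subseteq\X$; I will use $T(\P)$.

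\emph{Separation.} For distinct $p,p'\in\P$ we have $\|Tp-Tp'\|\geq\|p-p'\|\geq 2$, so $T(\P)$ is $2$-separated.

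\emph{Density.} Given $y\in\Y$, put $x=T^{-1}y$ and pick $p\in\P$ with $\|x-p\|\leq r$. Then $\|y-Tp\|\leq d\,\|x-p\|\leq dr$, so $T(\P)$ is $dr$-dense.

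Hence $\gamma(\Y)\leq dr$, and letting $d\downarrow d_{BM}(\X,\Y)$ and $r\downarrow\gamma(\X)$ gives the claim. Since $T$ is linear, it maps subgroups to subgroups, so the same argument gives the inequality for $\gamma^*$.

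There is no real obstacle here. The only points requiring care are the normalisation of $T$ and passing to the infimum, since neither $\gamma$ nor $d_{BM}$ need be attained.
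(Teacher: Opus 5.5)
Your proposal is correct and follows essentially the same route as the paper: take the product set $\P\times\mathcal{Q}$ in $\X\oplus_\infty\Y$ for the first pair of inequalities, and push a packing forward by an isomorphism $T$ normalised so that $\|x\|\leq\|Tx\|\leq M\|x\|$ for the second. You spell out the normalisation of $T$ and the passage to the infimum, which the paper leaves implicit.
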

Observe that the first claim does not hold for different $\ell_p$-sums: for instance, $\gamma(\ell_1^3)= 7/6$ by \cite{LiZong}, while $\gamma(\ell_1^2)= \gamma(\R)= 1$.

\begin{proof} For the first part, notice that if $\P$ and $\mathcal{Q}$ are $r$-dense and $2$-separated in $\X$ and $\Y$ respectively, then $\P\times \mathcal{Q}$ is $r$-dense and $2$-separated in $\X\oplus_\infty \Y$. For the second part, if $T\colon \X\to \Y$ is such that $\|x\|\leq \|Tx\|\leq M\|x\|$ and $\P$ is $r$-dense and $2$-separated in $\X$, then $T(\P)$ is $Mr$-dense and $2$-separated in $\Y$.
\end{proof}

\begin{remark} Most of our arguments are of geometric flavour and don't require completeness. In the few instances where completeness is needed (such as \Cref{prop: CPZ gamma>=2/K} below), we consider the completion $\hat{\X}$ of $\X$ and rely on the fact that most parameters of normed spaces (\emph{e.g.}, $\delta_\X$, $K(\X;\kappa)$, $\gamma(\X)$) are invariant under taking completions. Notice however that we don't know if $\gamma^*(\X)= \gamma^*(\hat{\X})$ for all normed spaces $\X$.
\end{remark}

Casini, Papini, and Zanco showed in \cite{CPZ} that $\gamma(\X)\geq 2/K(\X)$ if $\X$ has an infinite-dimensional reflexive subspace and asked whether the same inequality holds true for every infinite-dimensional Banach space, \cite{CPZ}*{Question 1.3}. The argument in \cite{CPZ} depends on Corson's theorem \cite{Corson} and it turns out that the same argument carries over for all normed spaces, upon replacing the usage of Corson's theorem with its generalisation \cite{FZ06}. Thus, we have the following fact, whose short proof we give for the sake of completeness.

\begin{proposition}\label{prop: CPZ gamma>=2/K} For every infinite-dimensional normed space $\X$ one has
\[ \gamma(\X)\geq \frac{2}{K(\X)}. \]
\end{proposition}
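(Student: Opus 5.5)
We argue by contradiction via the equivalent formulation \eqref{eq: 2/gamma(X)}: it suffices to show that if $\P\subseteq\X$ is $1$-dense and $r$-separated, then $r\leq K(\X)$. Since all the quantities involved are invariant under completion, we may assume $\X$ is a Banach space. Fix such a $\P$ and $\e>0$. The goal is to find infinitely many points of $\P$ inside a single ball of radius $1+\e$. Once we have them, rescaling gives an infinite $\frac{r}{1+\e}$-separated subset of $B_\X$, so $r\leq (1+\e)K(\X)$. Letting $\e\to0$ then yields $r\leq K(\X)$, and taking the supremum over $r$ gives $2/\gamma(\X)\leq K(\X)$.

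The key step is to produce a point $x\in\X$ that is \emph{singular} for the covering $\{B(p,1)\}_{p\in\P}$, i.e.\ every neighbourhood of $x$ meets infinitely many of the balls. Indeed, if $x$ is singular, then $B(x,\e)$ meets infinitely many balls $B(p,1)$. Each such $p$ satisfies $\|p-x\|\leq 1+\e$, so infinitely many points of $\P$ lie in $B(x,1+\e)$, exactly as required.

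To get a singular point I will invoke the generalisation of Corson's theorem due to Fonf and Zanco \cite{FZ06}: in an infinite-dimensional Banach space, every covering by bounded closed convex sets (here, balls of radius $1$, which form a covering because $\P$ is $1$-dense) fails to be locally finite. That is, some point has every neighbourhood meeting infinitely many members of the covering. This is precisely where the original argument of \cite{CPZ} needed reflexivity (through Corson's theorem) and where the general version removes that need.

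The main obstacle is making sure the cited result applies in the required generality. The hypotheses to check are: completeness, which we arranged above; infinite dimension; and that the covering members are bounded convex bodies, which they are. An earlier fix-up, now only for clarity, concerns the passage to the completion. If $\P\subseteq\X$ is $1$-dense in $\X$, it is $(1+\e)$-dense in $\hat\X$, which is harmless after letting $\e\to0$. Moreover, $K(\hat\X)=K(\X)$.
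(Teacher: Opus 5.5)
Your argument is the paper's proof except for one missing hypothesis, and that omission is a genuine gap. The Fonf--Zanco generalisation of Corson's theorem does \emph{not} say that every covering of an infinite-dimensional Banach space by bounded closed convex sets has a singular point. It says this only under the additional assumption that $\X$ contains no isomorphic copy of $c_0$.

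That hypothesis cannot be dropped. In $c_0$, take the balls $B(d,1)$ with $d$ ranging over the (necessarily finitely supported) sequences with entries in $2\ZZ$. They cover $c_0$, and the covering is locally finite. Indeed, given $x\in c_0$ and $\e<1/2$, any ball $B(d,1)$ meeting $B(x,\e)$ must have $d_i=0$ whenever $|x_i|<1-\e$. On each of the finitely many remaining coordinates there are at most two admissible values of $d_i$. So for $\X=c_0$ your key step produces no singular point. Your list of hypotheses to verify (completeness, infinite dimension, bounded convex members) omits exactly the one that matters.

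The fix is the case split the paper makes, applied to the completion $\hat{\X}$. Suppose first that $\hat{\X}$ contains an isomorphic copy of $c_0$. By James' distortion theorem it then contains $(1+\e)$-isomorphic copies of $c_0$ for every $\e>0$, so $K(\X)=K(\hat{\X})=2$. The inequality then reduces to the trivial bound $\gamma(\X)\geq 1$.

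If instead $\hat{\X}$ contains no copy of $c_0$, your argument goes through verbatim and coincides with the paper's proof. You take a singular point $x_0$ of the covering $\{p+B_\X\}_{p\in\P}$, find infinitely many points of $\P$ in $B(x_0,1+\e)$, rescale, and let $\e\to0$.
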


\begin{proof} Notice that, if $\hat{\X}$ denotes the completion of $\X$, then $\gamma(\hat{\X})= \gamma(\X)$ and $K(\hat{\X})= K(\X)$. Thus, we can assume that $\X$ is a Banach space (which we require in order to apply \cite{FZ06}). If $\X$ contains an isomorphic copy of $c_0$, then $K(\X)=2$, and the desired inequality is obvious. Otherwise, we use \eqref{eq: 2/gamma(X)} to show that $2/\gamma(\X)\leq K(\X)$. Take a set $\P\subseteq \X$ that is $1$-dense and $r$-separated. Then, the collection $\B\coloneqq \{x+ B_\X\colon x\in \P\}$ is a covering of $\X$, which by \cite{FZ06} has a singular point $x_0$. Hence, for every $\e>0$, the ball $B(x_0,\e)$ intersects infinitely elements of $\B$; thus, $\|x_0-x\|\leq 1+\e$ for infinitely many elements $x$ of $\P$. In other words, the ball $B(x_0,1+\e)$ contains an infinite $r$-separated set, whence $r\leq (1+\e)\cdot K(\X)$. Letting $\e\to 0$, we deduce that $r\leq K(\X)$ and the conclusion follows from \eqref{eq: 2/gamma(X)}.
\end{proof}

\begin{remark}\label{rmk: CPZ for kappa} It is of course natural to wonder if, in a normed space $\X$ of density $\kappa$, one might improve the above inequality to
\[ \gamma(\X)\geq \frac{2}{K(\X; \kappa)}. \]
However, this in general fails to hold. For instance, $\gamma(c_0(\omega_1))=1$, while $K(c_0(\omega_1); \omega_1)= 1$, \cite{EltonOdell}*{Remarks (2)}. Since the canonical norm on $c_0(\omega_1)$ can be approximated by norms that are simultaneously Fr\'echet smooth and LUR \cite{DGZ}*{Corollary II.7.8} and the above inequality is continuous in the Banach--Mazur distance, it even fails to hold for spaces that are Fr\'echet smooth and LUR. On the other hand, our \Cref{thm: gamma of phi-octa} will give a sufficient condition for its validity, which in particular applies to all super-reflexive Banach spaces (\Cref{thm: superreflexive and not}).
\end{remark}

\section{LUR points and \texorpdfstring{\Cref{probl: gamma=2/k}}{Problem 1.1}}\label{sec: packing LUR}
The objective of this section is the proof of \Cref{mth: LUR point}. In particular, the main result of the section is that that $\gamma(\X)>1$ whenever $B_\X$ admits a LUR point (\Cref{thm: gamma LUR}). We then apply this result to derive our first counterexamples to Swanepoel's problem, \Cref{probl: gamma=2/k}. We start by mentioning two ingredients that we shall exploit in the proof of \Cref{thm: gamma LUR}. The first lemma can be found in \cite{DESOVEstar}*{Lemma 3.7}, or \cite{DEVEtiling}*{Lemma 4.5}.

\begin{lemma}[\cites{DESOVEstar, DEVEtiling}]\label{lemma: 3LUR balls} Let $\X$ be a normed space, $\eta\geq0$, and $B_0, B_1, B_2$ be three mutually non-overlapping balls of radius one, where $B_0\coloneqq B_\X$. Suppose that there exist points $x_i\in\partial B_i$ ($i=0,1,2$) such that ${\rm diam} \{x_0, x_1, x_2\}\leq \eta$; then
\begin{equation}\label{eq: 3balls}
	\textstyle {\rm diam}\bigl\{y\in S_\X\colon \|x_0+y\|\geq 2-\eta\bigr\} \geq2-2\eta.
\end{equation}
\end{lemma}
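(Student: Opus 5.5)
The plan is to exhibit two explicit points of the set in \eqref{eq: 3balls} that are far apart. These are the unit vectors pointing from the contact points $x_1,x_2$ towards the centres of $B_1,B_2$. Write $B_i=B(c_i,1)$ for $i=1,2$, so $c_0=0$. Non-overlapping balls of radius one have centres that are $2$-separated, which gives
\[ \|c_1\|\geq 2,\qquad \|c_2\|\geq 2,\qquad \|c_1-c_2\|\geq 2. \]
Moreover $x_0\in S_\X$ and $\|x_i-c_i\|=1$ for $i=1,2$.

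First I define $y_i\coloneqq c_i-x_i$ for $i=1,2$, so that $y_i\in S_\X$. Next I check that each $y_i$ lies in the set in \eqref{eq: 3balls}. Since $x_i+y_i=c_i$, the triangle inequality gives
\[ \|x_0+y_i\|\geq \|x_i+y_i\|-\|x_0-x_i\| = \|c_i\|-\|x_0-x_i\|\geq 2-\eta, \]
where the last step uses $\|c_i\|\geq 2$ and the hypothesis ${\rm diam}\{x_0,x_1,x_2\}\leq\eta$.

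Then I estimate the distance between the two points:
\[ \|y_1-y_2\| = \|(c_1-c_2)-(x_1-x_2)\|\geq \|c_1-c_2\|-\|x_1-x_2\|\geq 2-\eta. \]
Hence the diameter in \eqref{eq: 3balls} is at least $2-\eta$. Since $\eta\geq 0$, this is at least $2-2\eta$, as required.

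I do not expect a real obstacle; the argument is just these triangle inequalities. The only point to confirm is that the boundary of a ball of radius one in a normed space is exactly the sphere $\{x\colon \|x-c_i\|=1\}$, which justifies $y_i\in S_\X$. The given bound $2-2\eta$ is weaker than what the argument yields, so there is slack. If one wanted a symmetric argument, one could also use $x_0$ itself as a third point of the set, since $\|x_0+x_0\|=2$; it is not needed.
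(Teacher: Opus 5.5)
Your proof is correct and complete. The vectors $y_i=c_i-x_i$ lie in $S_\X$ because $\partial B(c_i,1)=\{x\colon \|x-c_i\|=1\}$ in any normed space. The bounds $\|c_1\|,\|c_2\|,\|c_1-c_2\|\geq 2$ are exactly the non-overlapping hypothesis: two centres at distance $<2$ would have their midpoint in the interior of both balls. Both reverse triangle inequalities are right.

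The paper itself gives no proof of this lemma; it only quotes it from the two cited works, so there is no internal argument to compare yours with.

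Your argument actually gives the sharper bound $2-\eta$. Using $c_i-x_i$, which already has norm one, avoids a normalisation; normalising $c_i-x_0$ instead would lose an extra $\eta$ in the separation estimate and give only the stated $2-2\eta$. Either version suffices for the application in \Cref{thm: gamma LUR}. There one only needs that, once three such balls touch near $x_0$, the diameter of $\{y\in S_\X\colon \|x_0+y\|\geq 2-\eta\}$ stays bounded away from $0$ as $\eta\to 0$, which contradicts $x_0$ being a LUR point.
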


\begin{fact}\label{fact: protected-lur} Let $B$ and $C$ be convex bodies in $\X$. Assume that $x_0\in\partial B$ is an extreme point of $B$ and that $x_0\in \inte(B\cup C)$. Then, $x_0\in\inte C$.
\end{fact}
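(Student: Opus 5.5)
We argue by contradiction. Suppose that $x_0\notin\inte C$, and use this to produce points arbitrarily close to $x_0$ that lie outside $B\cup C$, which contradicts $x_0\in\inte(B\cup C)$.

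First, the hypothesis gives some $r>0$ with $B(x_0,r)\subseteq B\cup C$. Since $x_0\in\partial B$ and $B$ is a convex body, we cannot have $x_0\in\inte C$ trivially. We also cannot have $x_0\notin C$: otherwise, since $C$ is closed, a small ball around $x_0$ would be contained in $B$, forcing $x_0\in\inte B$, which is impossible. So $x_0\in\partial C$.

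Next, we separate. Because $C$ is a convex body and $x_0\in\partial C$, the Hahn--Banach theorem gives $f\in S_{\X^*}$ with $f(c)\leq f(x_0)$ for all $c\in C$. The open half-space $H\coloneqq\{x\colon f(x)>f(x_0)\}$ misses $C$. Hence $B(x_0,r)\cap H\subseteq B$.

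We then show this set forces $x_0$ not to be extreme in $B$. Pick $u\in \X$ with $f(u)>0$ and $\|u\|$ small. For any $v\in\ker f$ with $\|v\|$ small, both $x_0+u+v$ and $x_0+u-v$ lie in $B(x_0,r)\cap H\subseteq B$. As $\X$ is a normed space of dimension at least $2$, we can take $v\neq0$; the one-dimensional case can be handled directly, since there $B$ and $C$ are intervals and the claim is immediate.

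To conclude, we need a segment through $x_0$ itself, so we exploit closedness of $B$. For $t\in(0,1]$, the points $x_0+tu\pm v$ lie in $B(x_0,r)\cap H$ provided $\|u\|+\|v\|<r$, so they belong to $B$. Letting $t\to0$ and using that $B$ is closed, we get $x_0\pm v\in B$. Then $x_0=\frac12\bigl((x_0+v)+(x_0-v)\bigr)$ with $v\neq 0$, which contradicts the assumption that $x_0$ is an extreme point of $B$. Therefore $x_0\in\inte C$.

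The main delicate point is this limiting step, passing from points strictly inside $H$ to points on the supporting hyperplane. It is exactly where closedness of $B$ and the fact that $v$ is tangent to the hyperplane $\{f=f(x_0)\}$ are used.
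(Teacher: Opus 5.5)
Your argument is correct in dimension at least $2$ and is essentially the paper's proof. The paper also separates $x_0$ from $C$ by a functional $f$, deduces that $\{x\in B(x_0,r)\colon f(x)\geq f(x_0)\}\subseteq B$ from the closedness of $B$, and contradicts extremality. Your version spells out the segment $[x_0-v,x_0+v]$ with $0\neq v\in\ker f$, which the paper leaves implicit. Your preliminary reduction to $x_0\in\partial C$ is harmless but unnecessary, since you can separate $x_0$ from the open convex set $\inte C$ directly and pass to $C=\overline{\inte C}$.

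The one thing that is wrong is your remark that the one-dimensional case is `immediate'. In $\X=\R$ the statement is false. Take $B=[0,1]$, $C=[1,2]$ and $x_0=1$. Then $x_0$ is an extreme point of $B$ lying in $\partial B$, and $x_0\in\inte(B\cup C)=(0,2)$, but $x_0\notin\inte C=(1,2)$. So the existence of a nonzero $v\in\ker f$ is a genuine hypothesis, namely $\dim\X\geq 2$. The paper's proof also uses it tacitly: in $\R$ the closed half-ball is a segment with $x_0$ as an endpoint, which does not contradict extremality. You should state $\dim\X\geq 2$ as an assumption rather than assert that the excluded case is trivial.
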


\begin{proof} Let $\e>0$ be such that $B(x_0,\e) \subseteq B\cup C$ and assume, towards a contradiction, that $x_0\notin \inte C$. By the Hahn-Banach theorem there exists $f\in S_{\X^*}$ such that $f(x)\leq f(x_0)$ for all $x\in C$. Then, $\{x\in B(x_0,\e) \colon  f(x)> f(x_0)\}\subseteq B$, so $\{x\in B(x_0,\e) \colon  f(x)\geq f(x_0)\}\subseteq B$ too (as $B$ is closed). This contradicts $x_0$ being an extreme point for $B$.
\end{proof}

We are now ready for the main result of the section.
\begin{theorem}\label{thm: gamma LUR} If $\X$ is a normed space such that $B_\X$ admits a LUR point, then $\gamma(\X)>1$.
\end{theorem}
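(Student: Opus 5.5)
We argue by contradiction: assume $\gamma(\X)=1$ and let $x_0\in S_\X$ be a LUR point of $B_\X$. The contradiction will come from \Cref{lemma: 3LUR balls} with a small $\eta$. Indeed, if $y\in S_\X$ and $\|x_0+y\|\geq 2-\eta$, then $\delta_\X(x_0,\|x_0-y\|)\leq \eta/2$. Since $\delta_\X(x_0,\cdot)$ is positive on $(0,2]$, this forces $\|x_0-y\|\leq \rho(\eta)$, where $\rho(\eta)\to0$ as $\eta\to 0$. Hence the set in \eqref{eq: 3balls} has diameter at most $2\rho(\eta)$, which is smaller than $2-2\eta$ once $\eta$ is small. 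So it suffices to find, for every small $\eta>0$, three mutually non-overlapping unit balls $B_0=B_\X$, $B_1$, $B_2$ together with boundary points $x_i\in\partial B_i$ satisfying ${\rm diam}\{x_0,x_1,x_2\}\leq\eta$. Here $x_0$ is the LUR point itself, so $B_0$ is the ball with the LUR point on its boundary.

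To produce these balls, fix $\eta$ and a much smaller $\e>0$, and let $\P$ be a $2$-separated, $(1+\e)$-dense set. After translating we may assume $0\in\P$, so $B_0=B_\X$ belongs to the packing. First, the point $(1+s)x_0$, for a small $s\gg\e$, lies at distance $s>\e$ from $B_0$. It is therefore covered by some $B(q_1,1+\e)$ with $q_1\neq0$, and $B_1\coloneqq B(q_1,1)$ has a boundary point $x_1$ with $\|x_1-x_0\|\leq s+\e$. Since $B_0$ and $B_1$ are non-overlapping, Hahn--Banach gives $g\in S_{\X^*}$ and $c\in\R$ with $B_0\subseteq\{g\leq c\}$ and $B_1\subseteq\{g\geq c\}$. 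Because $x_0$ and $x_1$ lie on opposite sides and are close, $g(x_0)$ is close to $c$, namely $c-g(x_0)\leq s+\e$.

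The key step is to locate a point $w$ with $\|w-x_0\|\lesssim r$, for a parameter $s\ll r\ll\eta$, such that
\[ \dist(w,B_0)>\e \qquad\text{and}\qquad \dist(w,B_1)>\e. \]
Given such a $w$, it is covered by a third ball $B_2=B(q_2,1)$ with $q_2\notin\{0,q_1\}$, which has a boundary point $x_2$ within $\e$ of $w$. Then ${\rm diam}\{x_0,x_1,x_2\}=O(r)\leq\eta$, and the contradiction follows as explained above.

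We would look for $w$ of the form $w=x_0+rh-\tau z$, where $h\in\ker g$ has norm one and $z$ is a fixed vector with $g(z)>0$; the term $-\tau z$ pushes $w$ strictly into $\{g<c\}$, giving distance at least $\tau g(z)$, up to the $O(s)$ error in $c-g(x_0)$, from $B_1\subseteq\{g\geq c\}$. To show that $w$ is outside $B_0$ by a definite margin we use LUR at $x_0$. Suppose $\|w\|\leq 1+\theta$. Then $w/(1+\theta)$ and $x_0$ both lie in $B_0$, and their midpoint has $g$-value close to $g(x_0)$, hence norm close to $1$. By the definition of $\delta_\X(x_0,\cdot)$ this forces $\|w-x_0\|$ to be small, contradicting $\|rh\|=r$ once $\tau$, $\theta$ are small relative to $\delta_\X(x_0,r)$. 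Choosing in order $r$, then $\tau$ and $s$, then $\e$ (each much smaller than the previous quantities, including $\delta_\X(x_0,r)$) makes all margins exceed $\e$. We expect this quantitative bookkeeping to be the main obstacle. The point is that $B_1$ is merely a translate of $B_\X$ with no rotundity at $x_1$, so all the curvature must be extracted from $B_0$ at $x_0$, which is why the separating functional $g$ is used in the way described. This is the quantitative analogue of \Cref{fact: protected-lur}.
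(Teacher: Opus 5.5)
Your argument is correct. Its architecture is the paper's: a contradiction with \Cref{lemma: 3LUR balls}, a second ball obtained by covering a point just outside $x_0$ on the ray through $x_0$, and a third ball obtained by covering a point near $x_0$ lying outside both slightly inflated balls. Where you differ is the proof of this last step, which is the paper's \Cref{claim: 3rd close ball}.

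The paper argues qualitatively. By \Cref{fact: protected-lur} it may assume $x_0'\in\inte B_1'$, and it picks a common boundary point $z'\in\partial B_0'\cap\partial B_1'$. Rotundity of $B_0'$ at $x_0'$ then shows that the half-line beyond $z'$ in direction $z'-x_0'$ avoids both inflated balls. The LUR modulus, together with $\|p\|\geq 2$, shows $\|z'-x_0'\|<2\e$.

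You instead separate $B_0$ and $B_1$ by a norm-one functional $g$ and write down $w$ explicitly. The margin from $B_1$ is then linear and costless: since $h\in\ker g$ and $g(x_0)\leq c$, you get $\dist(w,B_1)\geq\tau g(z)$ with no error term at all. The closeness of $x_1$ to $x_0$ enters only through $c-g(x_0)\leq s+\e$. This makes $g$ almost supporting at $x_0$, and the LUR modulus then gives the margin from $B_0$. Your route avoids \Cref{fact: protected-lur}, the case distinction, and the choice of $z'$, at the price of a longer chain of parameters.

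A few details to tighten when writing it out:
\begin{itemize}
\item The functional $g$ depends on $\P$, hence on $\e$, which you choose last. So $g(z)$ needs a lower bound independent of $\e$. Taking $z=x_0$ works, since $g(x_0)\geq 1-s-\e$.
\item $\delta_\X(x_0,\cdot)$ need not be continuous, so compare with $\delta_\X(x_0,r/2)$ rather than $\delta_\X(x_0,r)$. Your midpoint argument actually yields $\delta_\X(x_0,r-\tau-\theta)\leq s+\e+\frac{\tau+\theta}{2}$.
\item $x_2$ need not lie within $\e$ of $w$; this fails if $w\in\inte B_2$. However, $x_0\notin\inte B_2$, so the segment $[w,x_0]$ meets $\partial B_2$. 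In any case $\|x_2-x_0\|\leq\|w-x_0\|+\e=O(r)$, which is all you need.
\item Choosing a unit vector $h\in\ker g$ uses $\dim\X\geq 2$, just as the paper's choice of $z'$ does. This hypothesis is unavoidable: $\pm1$ are LUR points of $B_\R=[-1,1]$, while $\gamma(\R)=1$ via $2\ZZ$.
\end{itemize}
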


\begin{proof} The rough idea is that, if $\gamma(\X)=1$, close to a LUR point $x_0$ of $B_\X$ there has to be a second ball $B_1$ of the packing. Since $x_0$ is LUR, the balls $B_\X$ and $B_1$ aren't enough to cover a large enough fraction of a neighbourhood of $x_0$ (\Cref{claim: 3rd close ball}). Hence, there exists a third ball $B_2$ close to $x_0$, which then contradicts \Cref{lemma: 3LUR balls}. Now to the details.

Let $x_0$ be a LUR point for $B_0\coloneqq B_\X$; by definition of LUR point, 
\[ {\rm diam}\big\{y\in S_\X\colon \|x_0+y\|\geq 2-\eta\big\} \to 0, \qquad \text{as } \eta\to 0; \]
hence, there is $\eta>0$ such that \eqref{eq: 3balls} does not hold. Thus, by \Cref{lemma: 3LUR balls}, it is impossible to find balls $B_1$ and $B_2$ of radius one and points $x_i\in \partial B_i$ such that $B_0, B_1, B_2$ don't overlap and ${\rm diam} \{x_0, x_1, x_2\}\leq \eta$. We argue by contradiction and show that the assumption $\gamma(\X) =1$ implies that balls and points as above do exist.

Choose $\e\coloneqq\eta/6$ and let $\delta\in (0,1)$ be such that
\begin{equation}\label{eq: choice delta}
    2\delta< \delta_\X(x_0,\e)\leq \frac\e2.
\end{equation}

If, towards a contradiction, $\gamma(\X)=1$, there exists a set $\P\subseteq \X$ that is $2$-separated and $(1+ \delta)$-dense. Moreover, we can (and do) assume that $0\in \P$. Thus, $\|p\|\geq 2$ for every non-zero $p\in \P$. Let us denote 
\[x_0'\coloneqq (1+\delta)x_0,\qquad x_1'\coloneqq(1+2\delta)x_0; \]
by our assumption that $\P$ is $(1+ \delta)$-dense, there exists $p\in \P\setminus\{0\}$ such that $x_1'\in p+(1+\delta)B_\X$. We now set\footnote{In general, objects decorated with a prime are inflated balls of radius $1+\delta$, or points therein, while objects without the prime are balls of radius one, or their points.} (see \Cref{fig: LUR point})
\[ B_0'\coloneqq (1+\delta)B_\X,\quad B_1\coloneqq p+B_\X, \quad B_1'\coloneqq p+(1+\delta)B_\X. \]
Observe that, since $\|x_1'\|= 1+ 2\delta$ and $\|p\|\geq 2$, we have $1-2\delta\leq \|x_1'-p\|\leq 1+\delta$. Hence, setting $x_1\coloneqq p+\frac{x_1'-p}{\|x_1'-p\|}$, it holds that $x_1\in\partial B_1$ and $\|x_1'-x_1\|\leq 2\delta$. In order to find the point $x_2$ and the ball $B_2$, we shall appeal to the following claim.

\begin{figure}\begin{tikzpicture}[scale=0.4]
    
    \fill[fill=black] (0,0) circle (2pt);
	\node at (-0.3,-0.4) {\tiny{$0$}};

    \draw (0,0) circle (8);
    \shade[color = gray!40, opacity = 0.4] (0,0) circle (8);
    \node at (-3,-3) {\small{$B_0$}};
	\draw[dashed,thick, line width = 0.5 pt, opacity=0.3](0,0)--(0,8);
	\node at (-0.4,4) {\tiny{$1$}};

	\draw[dashed] (0,0) circle (10);
	\shade[color = gray!40, opacity = 0.2] (0,0) circle (10);
    \node at (-6.3,-6.3) {\small{$B_0'$}};		
	\draw[dashed,thick, line width = 0.5 pt, opacity=0.3](0,0)--(-10,0);
	\node at (-5,0.4) {\tiny{$1+\delta$}};

	\fill[fill=black] (8,0) circle (2pt);
    \node at (8.45,0.3) {\tiny{$x_0$}};
    \fill[fill=black] (10,0) circle (2pt);
	\node at (10.45,0.3) {\tiny{$x_0'$}};
	\fill[fill=black] (12,0) circle (2pt);
	\node at (12.45,0.3) {\tiny{$x_1'$}};
    \fill[fill=black] (12.23,-0.45) circle (2pt);
    \node at (12.9,-0.55) {\tiny{$x_1$}};
				
    \draw (16,-7.5) circle (8);
    \shade[color = gray!40, opacity = 0.4] (16,-7.5) circle (8);
    \node at (19,-10.5) {\small{$B_1$}};
    \draw[dashed,thick, line width = 0.5 pt, opacity=0.3](12,0)--(16,-7.5);
    \fill[fill=black] (16,-7.5) circle (2pt);
    \node at (16,-8) {\tiny{$p$}};

    \draw[dashed] (16,-7.5) circle (10);
    \shade[color = gray!40, opacity = 0.2] (16,-7.5) circle (10);
    \node at (22.3,-13.8) {\small{$B_1'$}};

    \draw[dotted] (10,0) circle (4);
	\fill[color = gray!40, opacity = 0.2] (10,0) circle (4);
    \fill[pattern=north east lines, opacity = 0.2] (10,0) circle (4);
    \node at (18,5.3) {\small{$x_0'+2\e B_\X$}};
    \draw [->, thick, dotted, rounded corners] (18,6) to [out=120,in=60] (12,3);

    \fill[fill=black] (10.3,3) circle (2pt);
	\node at (10.75,3.3) {\tiny{$x_2'$}};

\end{tikzpicture}
\caption{Choice of balls $B_1, B_2$ and points $x_1, x_2$ in the proof of\\ \Cref{thm: gamma LUR} (the point $x_1'$ is not necessarily outside of $B_1$).}\label{fig: LUR point}
\end{figure}
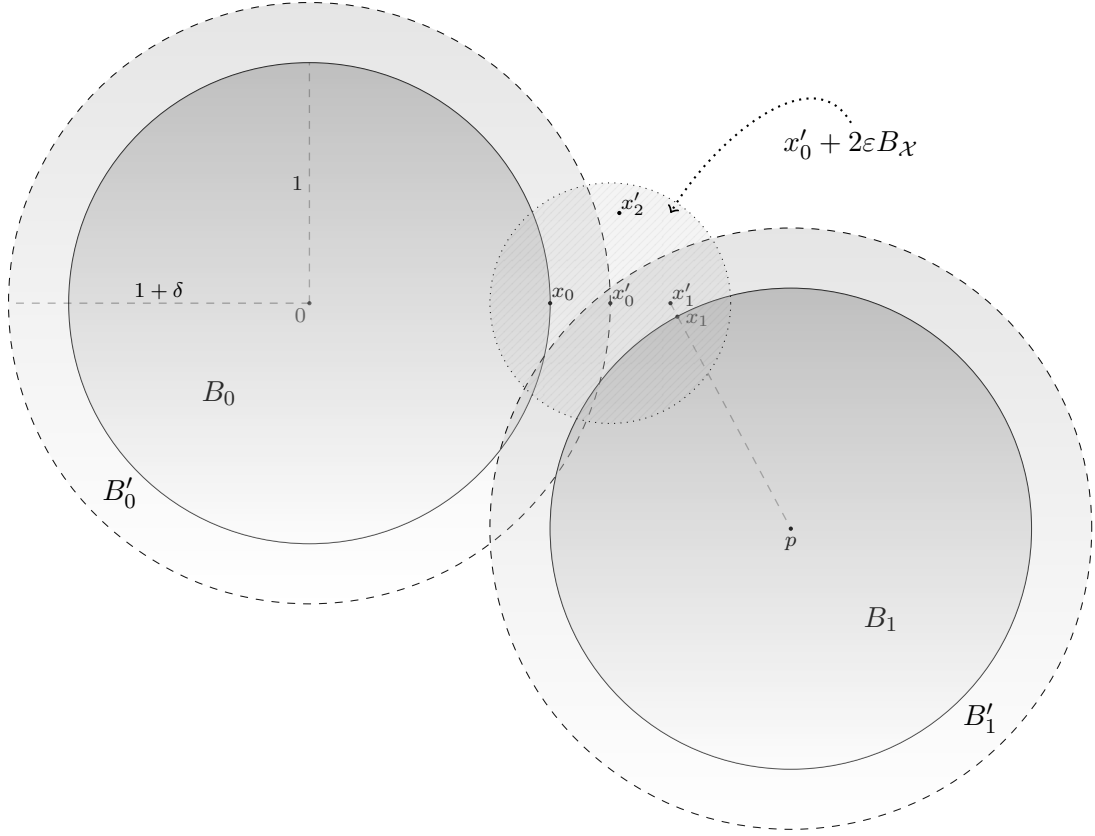

\begin{claim}\label{claim: 3rd close ball} The set $(x_0'+2\e B_\X) \setminus (B_0'\cup B_1')$ is non-empty.
\end{claim}

\begin{proof}[Proof of \Cref{claim: 3rd close ball}] \renewcommand\qedsymbol{$\square$} If $x_0'\notin \inte (B_0'\cup B_1')$, the conclusion of the claim obviously holds. Hence, by \Cref{fact: protected-lur}, we can assume that $x_0'\in \inte B_1'$. Take a point $z'$ in $(\partial B_0')\cap (\partial B_1')$ and consider the open half-line $\ell\coloneqq z'+(0,\infty)(z'-x_0')$. An easy convexity argument implies that:
\begin{itemize}
    \item $\ell$ does not intersect $B_1'$, since $x_0'\in\inte B_1'$;
    \item $\ell$ does not intersect $B_0'$, since $x_0'$ is a LUR point for $B_0'$.
\end{itemize}
Hence, $\ell$ is disjoint from $B_0'\cup B_1'$ and any point in $\ell$ that is close to $z'$ proves the claim, once we show that $\|z'- x_0'\|< 2\e $. For this, since $\frac{x_0'+z'}{2}\in B_1'$, we have
\[ \left\| \frac{x_0'+ z'}{2} \right\|\geq \|p\|- \left\| \frac{x_0'+ z'}{2}- p\right\| \geq1- \delta. \]
Thus, if we put $z\coloneqq \frac{z'}{1+\delta}\in B_{\X}$, we obtain
\[ \left\| \frac{x_0+ z}{2}\right\|= \frac{1}{1+ \delta} \left\| \frac{x_0'+ z'}{2} \right\|\geq \frac{1- \delta}{1+ \delta}. \]
Choosing $r\coloneqq \|x_0-z\|$, the definition of $\delta_\X(x_0,\cdot)$ yields
\[ \delta_\X(x_0,r)\leq1- \left\|\frac{x_0+ z}{2}\right\|\leq \frac{2\delta}{1+ \delta}< \delta_\X(x_0,\e). \]
Since $\delta_{\X}(x_0,\cdot)$ is non-decreasing, we deduce that $r<\e$, thus (recall that $\delta<1$)
\[ \|x_0'-z'\|=r(1+\delta)< \e(1+\delta)< 2\e, \]
and the claim is proved.
\end{proof}

By our previous claim, we can choose $x_2'\in (x_0'+2\e B_\X) \setminus (B_0'\cup B_1')$. Hence, there exists $q\in \P\setminus \{0, p\}$ such that $x_2'\in q+(1+ \delta) B_\X$. We denote $B_2\coloneqq q+ B_\X$. As above, the fact that $\|x_2'\|\leq 1+ \delta+ 2\e$ and $\|q\|\geq 2$ yields $1- \delta- 2\e\leq \|x_2'-q\|\leq 1+ \delta$. Thus, if we denote $x_2\coloneqq q+\frac{x_2'-q}{\|x_2'-q\|}$, we obtain $x_2\in\partial B_2$ and $\|x_2'-x_2\|\leq \delta+2\e$. Finally, observe that:
\begin{itemize}
    \item $\|x_0-x_1\|\leq \|x_0-x_1'\|+ \|x_1'-x_1\|\leq 2\delta+ 2\delta \leq \e$;
    \item $\|x_0-x_2\|\leq \|x_0-x_0'\|+ \|x_0'-x_2'\|+ \|x_2'-x_2\|\leq \delta+ 2\e+ \delta+ 2\e \leq 5\e$;
    \item $\|x_1-x_2\|\leq \|x_1-x_0\|+ \|x_0-x_2\|\leq 6\e$.
\end{itemize}
In particular, we have $\mathrm{diam}\{x_0, x_1, x_2\}\leq \eta$. Thus, we have found balls $B_0, B_1, B_2$ and points $x_0, x_1, x_2$ as in the assumptions of \Cref{lemma: 3LUR balls}, which contradicts our choice of $\eta$ and concludes the proof.
\end{proof}

As a corollary, we obtain that $\gamma(\X) >1$ when $\X$ is uniformly smooth, which compares to the same result for uniformly convex spaces, that we shall prove in \Cref{thm: superreflexive and not}.

\begin{corollary} Let $\X$ be a normed space such that $B_\X$ admits a strongly exposed point $x_0$ in which the norm is Fr\'echet differentiable. Then, $\gamma(\X) >1$. As a consequence, $\gamma(\X) >1$ when $\X$ is a Banach space with the Radon--Nikodym property and its norm is Fr\'echet smooth. In particular, this is the case if $\X$ is a uniformly smooth normed space.
\end{corollary}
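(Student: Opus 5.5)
The plan is to reduce everything to \Cref{thm: gamma LUR}: I will show that a strongly exposed point of $B_\X$ at which the norm is Fr\'echet differentiable is a LUR point of $B_\X$. Let $x_0\in S_\X$ be strongly exposed by $f\in S_{\X^*}$. Strong exposure means $f(x_0)=1$ and, for every sequence $(y_n)\subseteq B_\X$ with $f(y_n)\to 1$, we have $\|y_n-x_0\|\to0$. Fr\'echet differentiability of the norm at $x_0$ forces the derivative to be the unique supporting functional at $x_0$, so it equals $f$. Moreover, for every sequence $(g_n)\subseteq B_{\X^*}$ with $g_n(x_0)\to 1$, we have $\|g_n-f\|\to 0$ (\v{S}mulyan's lemma).

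To get LUR at $x_0$, take $y_n\in B_\X$ with $\|x_0+y_n\|\to2$, and pick norming functionals $g_n\in S_{\X^*}$ with $g_n(x_0+y_n)=\|x_0+y_n\|$. Then $g_n(x_0)\to1$ and $g_n(y_n)\to1$. By \v{S}mulyan, $\|g_n-f\|\to0$, hence $f(y_n)\geq g_n(y_n)-\|g_n-f\|\to 1$. Strong exposure then gives $y_n\to x_0$. Arguing by contradiction on sequences, this says $\delta_\X(x_0,\e)>0$ for each $\e>0$, so $x_0$ is a LUR point, and \Cref{thm: gamma LUR} yields $\gamma(\X)>1$.

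For the consequences, let $\X$ be a Banach space with the Radon--Nikodym property. Then $B_\X$ is the closed convex hull of its strongly exposed points (Phelps/Bourgain), so in particular some strongly exposed point exists. If the norm is Fr\'echet smooth, it is Fr\'echet differentiable at that point, and the first part applies. If $\X$ is a uniformly smooth normed space, pass to its completion $\hat\X$. It is still uniformly smooth, hence super-reflexive and therefore RNP, with Fr\'echet smooth norm, so $\gamma(\hat\X)>1$. Finally $\gamma(\X)=\gamma(\hat\X)$, by the remark on completions in \Cref{sec: def gamma}.

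The only delicate points are two. The first is invoking \v{S}mulyan's characterization correctly, in the direction that differentiability implies norm convergence of almost-norming functionals. The second is handling the completion in the uniformly smooth case. Neither seems a serious obstacle.
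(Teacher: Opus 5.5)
Your proposal is correct and follows essentially the same route as the paper: you use \v{S}mulyan's lemma to show that a strongly exposed point where the norm is Fr\'echet differentiable is a LUR point and apply \Cref{thm: gamma LUR}. You then use that RNP balls are the closed convex hull of their strongly exposed points, and pass to the (uniformly smooth, hence super-reflexive) completion in the last clause. The only difference is that you write out the \v{S}mulyan argument for the LUR step explicitly, whereas the paper simply cites it from the literature.
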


It should be pointed out that G\^{a}teaux smoothness of the norm is not sufficient to ensure that $\gamma(\X) >1$. In fact, there exists separable normed spaces $\X$ that are G\^{a}teaux smooth and octahedral, \cite{CH}. For these spaces, $\gamma^*(X) =1$, as we prove in \Cref{thm: octahedral} below.

\begin{proof} It is a consequence of \v{S}mulyan's lemma that, if $x_0\in B_\X$ is strongly exposed and the norm is Fr\'echet differentiable in $x_0$, then $x_0$ is a LUR point, (see, \emph{e.g.}, \cite{AGP}*{Theorem 2.10} for a proof). Therefore, the first part of the result follows directly from \Cref{thm: gamma LUR}. If $\X$ is a Banach space with the Radon--Nikodym property, its unit ball is the closed convex hull of the strongly exposed points; therefore, the first clause applies. Finally, if $\X$ is uniformly smooth, its completion $\hat{\X}$ is also uniformly smooth, hence \emph{a fortiori} Fr\'echet smooth; further, it is super-reflexive, whence it has the Radon--Nikodym property. Thus, the previous clause yields $\gamma(\X)= \gamma(\hat{\X})> 1$.
\end{proof}

We shall now use \Cref{thm: gamma LUR} to give examples of Banach spaces $\X$ such that $\gamma(\X) > 2/K(\X)$, thereby answering Swanepoel's question (\Cref{probl: gamma=2/k}) in the negative.

\begin{example}\label{ex: ell1 +2 R} The Banach space $\ell_1\oplus_2 \R$ clearly admits a LUR point, thus $\gamma(\ell_1\oplus_2\R)>1$ by the above theorem. On the other hand, $K(\ell_1\oplus_2\R)=2$. Thus,
\[ \gamma(\ell_1\oplus_2\R)> \frac{2}{K(\ell_1\oplus_2\R)}. \]

Further, let us observe that $\gamma(\ell_1)=1$, by \cite{Swanepoel} (or \Cref{thm: octahedral} below) and plainly $\ell_1\equiv \ell_1\oplus_1 \R$. It follows that the value of $\gamma(\ell_1\oplus_p \R)$ depends on $p$ (differently from the value of Kottman's constant). We actually do not know what the exact value of $\gamma(\ell_1\oplus_2\R)$ is, see \Cref{probl: ell1 +2 R}.
\end{example}

\begin{remark} More generally, if $\X$ is any Banach space isomorphic to $\ell_1$ (or to $c_0$) and $B_\X$ has a LUR point, then $\gamma(\X)>1$ by the previous theorem, while $K(\X)=2$ by James' distortion theorem. 
\end{remark}

We can also prove that every normed space admits an equivalent norm in which said inequality is strict. Thus, there are reflexive (and even isomorphic to $\ell_2$) counterexamples to Swanepoel's question.
\begin{corollary}\label{cor: renorming counterexample} Every infinite-dimensional normed space $\X$ has an equivalent norm $\n$ such that $B_{(\X,\n)}$ has a LUR point and $K(\X,\n)=2$. Thus, $\gamma(\X,\n)> 2/K(\X,\n)$.
\end{corollary}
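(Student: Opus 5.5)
The plan is to build the new norm by hand so that it contains a LUR point in its unit ball and still has Kottman constant $2$; the strict inequality will then follow at once from \Cref{thm: gamma LUR}.

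First, by Dvoretzky's theorem (or just infinite dimensionality plus Elton--Odell independence), I will not aim for anything deep. Instead, I take a one-codimensional decomposition $\X= \R x_0\oplus \Z$, where $\|x_0\|=1$, $f\in S_{\X^*}$ with $f(x_0)=1$, and $\Z\coloneqq\ker f$. Since $\Z$ is infinite dimensional, it contains a basic sequence, so there is a closed infinite-dimensional subspace on which I can put a norm isomorphic to the original one that contains an isometric $\ell_1^n$-structure. Concretely, I will choose a normalised basic sequence $(z_k)$ in $\Z$ with biorthogonal functionals $z_k^*$ (extended to $\Z$ by Hahn--Banach, bounded by some $C$). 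I then define an equivalent norm on $\Z$:
\[ \nn{z}\coloneqq \max\Bigl\{\|z\|, \sup_{k} \tfrac{1}{?}\,\cdots\Bigr\} \]
More simply, I will rely on the standard fact that every infinite-dimensional space admits an equivalent norm with $K=2$. For instance, one can take $\nn{z}\coloneqq\max\{\|z\|, c\sup_{k\neq j}|z_k^*(z)-z_j^*(z)|\}$ with small $c$, or use the norm $\max\{\|z\|, c\sup_k|z_k^*(z)|\}$ to produce a $c_0$-like sequence. The cleanest choice is a renorming making $\Z$ contain $(1+\e)$-copies of $\ell_\infty^n$ for all $n$ in a uniform way that yields an infinite $2$-separated set. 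If needed, I will instead cite the known result (e.g.\ from \cites{CGP, R_RACSAM}) that every infinite-dimensional space is renormable with $K=2$. This is the step I expect to be the main technical obstacle, and citing the literature is the first thing I would try.

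Second, on $\X=\R\oplus \Z$, I set $\n\coloneqq (|t|^2+\nn{z}^2)^{1/2}$ for $x=tx_0+z$. This norm is equivalent to the original one because the projection $x\mapsto f(x)x_0$ is bounded. The point $(1,0)$ is a LUR point of the new unit ball. Indeed, if $\|(t_n,z_n)\|\leq1$ and $\|(1+t_n,z_n)\|\to2$, then by the triangle inequality in $\R\oplus_2\R$ applied to $(|1+t_n|, \nn{z_n})$, we get $t_n\to1$ and $\nn{z_n}\to0$. This is the elementary $\ell_2$-sum computation. Finally, $K(\X,\n)\geq K(\Z,\nn\cdot)=2$, since $\Z$ embeds isometrically as a summand, and $K\leq2$ always holds.

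The conclusion then follows: \Cref{thm: gamma LUR} gives $\gamma(\X,\n)>1=2/K(\X,\n)$.
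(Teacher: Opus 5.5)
This is essentially the paper's proof: renorm a hyperplane $\Z$ to have Kottman constant $2$ (the paper simply cites Kottman's Theorem~7, which is the citation you fall back on), pass to $\R\oplus_2\Z$, where $(1,0)$ is a LUR point (this follows from uniform convexity of the Euclidean plane rather than from the triangle inequality) and $K=2$ is inherited from the summand $\Z$, and conclude with \Cref{thm: gamma LUR}. If you want a self-contained renorming instead of the citation, your first candidate $\max\{\|z\|, c\sup_{k\neq j}|z_k^*(z)-z_j^*(z)|\}$ works with $c=1$ but not with small $c$, since for $c=1$ one gets $\nn{z_k}=1$ and $\nn{z_k-z_j}\geq |z_k^*(z_k-z_j)-z_j^*(z_k-z_j)|=2$ for $k\neq j$, whereas the variant $\max\{\|z\|,c\sup_k|z_k^*(z)|\}$ gives no such $2$-separation.
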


\begin{proof} We can write $\X$ as $\Z\oplus \R$, where $\Z$ is a hyperplane in $\X$. By a result of Kottman \cite{Kottman}*{Theorem 7}, there is a norm $\n_\Z$ on $\Z$ such that $K(\Z,\n_\Z)=2$. We can now define an equivalent norm on $\X$ by setting
\[ \|(z,t)\|\coloneqq \sqrt{\|z\|^2_\Z +t^2}, \qquad(z,t)\in \Z\oplus \R, \]
namely, we consider $(\Z,\n_\Z)\oplus_2 \R$. Then, the point $(0,1)$ is a LUR point and $K(\X,\n)=2$, as desired.
\end{proof}

\section{Constructions of discrete subgroups and lattice packings}\label{sec: subgroup}
This section presents two constructions of discrete subgroups in some normed spaces, which directly yield upper bounds for the lattice simultaneous packing and covering constant $\gamma^*(\X)$. The first construction is based on a substantial generalisation of the argument employed in \cite{DRShilbert} to show that $\gamma^*(\X)\leq 2$ for all normed spaces. As a consequence, we obtain information on $\gamma^*(\X)$ when $\X$ is octahedral. The second construction employs the standard lattice tiling of $c_0$ and generalises the construction to a vast class of $\C(\K)$ spaces. Our first theorem is the main result of the section and contains the first construction mentioned above.

\begin{theorem}\label{thm: subgroup theta} Let $\X$ be an infinite-dimensional normed space of density $\kappa$. Suppose that there exists $\theta>1$ with the following property: for every subspace $\Z$ of $\X$ with $\dens(\Z)< \kappa$, there exists a vector $x\in S_\X$ such that
\begin{equation} \label{eq: x far from S_Z}
    \dist(x, S_\Z)\geq \theta.
\end{equation}
Then,
\[ \gamma^*(\X) \leq \frac{2}{\theta}. \]
\end{theorem}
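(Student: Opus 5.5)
We use the reformulation \eqref{eq: 2/gamma(X)}. It suffices to build, for each $\theta'<\theta$ close to $\theta$, a subgroup $G\subseteq\X$ that is $\theta'$-separated and $1$-dense; then $\gamma^*(\X)\le 2/\theta'$, and letting $\theta'\to\theta$ gives the claim. Equivalently, after rescaling, we want a $2$-separated subgroup that is $(2/\theta')$-dense. We build $G$ by transfinite recursion of length $\kappa$, following Klee's idea as used in \cite{DRShilbert}.

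\textbf{Setup.} Fix a dense subset $\{d_\alpha\}_{\alpha<\kappa}$ of $\X$, which exists since $\dens(\X)=\kappa$. We construct an increasing chain of subgroups $(G_\alpha)_{\alpha<\kappa}$ and take unions at limit stages. Each $G_\alpha$ will satisfy three properties:
\begin{itemize}
\item it is generated by at most $|\alpha|+\omega$ vectors, so $\Z_\alpha:=\overline{\spn}\,G_\alpha$ has density $<\kappa$ when $\kappa$ is uncountable;
\item it is $\theta'$-separated;
\item it is $1$-dense for $d_\beta$ for all $\beta<\alpha$, meaning $\dist(d_\beta,G_\alpha)\le1$.
\end{itemize}
Since separation is a finitary condition, it passes to unions, and so does the density property. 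The final group $G=\bigcup_\alpha G_\alpha$ is therefore $\theta'$-separated and within distance $1$ of a dense set. Replacing $1$ by $1-\e$ throughout, which costs only a factor absorbed by $\theta'\to\theta$, $G$ is $1$-dense.

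\textbf{Successor step.} Suppose $G_\alpha$ is given and $d=d_\alpha$ is at distance $>1$ from $G_\alpha$. We must add a generator $v$ with $\dist(d,G_\alpha+\ZZ v)\le 1$, while keeping $G_\alpha+\ZZ v$ $\theta'$-separated. The latter means $\|g+nv\|\ge\theta'$ for all $g\in G_\alpha$ and $n\ne0$.

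Let $\Z=\overline{\spn}(G_\alpha\cup\{d\})$, which has density $<\kappa$. The hypothesis \eqref{eq: x far from S_Z} gives $x\in S_\X$ with $\dist(x,S_\Z)\ge\theta$. Homogeneity then gives $\|z+tx\|\ge\theta\max\{\|z\|,|t|\}$ roughly, and in particular $\|z+tx\|\ge \theta|t|\,\dist(x/1,\ldots)$ type bounds. The key inequality we extract is $\|z+x\|\ge\theta$ for $\|z\|=1$, $z\in\Z$, together with convexity estimates.

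Now pick $g_0\in G_\alpha$ roughly nearest to $d$ and set $v:=(d-g_0)+ s x$ with a suitable scalar $s$. The cleanest choice in the spirit of \cite{DRShilbert} is to add the vector $w=d+\lambda x$ with $\lambda$ chosen so that $\|w-d\|\le1$, i.e.\ $\lambda=1$ (up to $\e$). Then:
\begin{itemize}
\item for $n=\pm1$, $\|g+n w\|=\|(g+nd)+nx\|$, which is $\ge\theta$ whenever $\|g+nd\|=1$, using \eqref{eq: x far from S_Z} with $z=-(g+nd)$;
\item for $|n|\ge2$, we would like a bound $\ge|n|\theta/\ldots$.
\end{itemize}

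\textbf{Main obstacle.} Passing from the unit-sphere condition $\dist(x,S_\Z)\ge\theta$ to $\|z+nx\|\ge\theta'$ for \emph{all} $z\in\Z$ and $n\ne0$ is the crucial point. I expect this to be handled by showing that the condition implies $\|z+tx\|\ge\theta'$ whenever $|t|\ge1$ and $z\in\Z$. The argument I would try is a case split on the size of $z$:
\begin{itemize}
\item if $\|z\|$ is large compared to $|t|$, the triangle inequality suffices;
\item if $\|z\|\approx|t|$, rescale to the sphere and use the hypothesis;
\item if $\|z\|$ is small, convexity of $s\mapsto\|sz/\|z\|+x\|$ together with $\|x\|=1$ and the value at $s=1$ being $\ge\theta$ should force $\|\pm z+x\|\ge\theta$-type bounds via the line through $0$.
\end{itemize}
If this direct estimate fails, the fallback is to modify the hypothesis usage by applying it to a larger $\Z$ (still of density $<\kappa$) and choosing $x$ iteratively. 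When $\kappa=\omega$, the set $\Z$ must be finite-dimensional, and the construction is a countable recursion that adds finitely many generators at each step.
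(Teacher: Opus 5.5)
Your overall architecture matches the paper's: a transfinite recursion along a dense set where, whenever $\dist(d,G_\alpha)>1$, you apply the hypothesis to $\Z=\overline{\spn}(G_\alpha\cup\{d\})$ and adjoin the generator $d+x$. The paper adjoins $u_\beta-x_\beta$, which is the same up to the sign of $x$. The gap is the separation check. You leave it open, and the lemma you propose for closing it is false. It is not true that $\|z+tx\|\geq\theta'$ for all $z\in\Z$ and $|t|\geq1$: taking $z=0$, $t=1$ gives $\|x\|=1<\theta'$, and more generally $\|z+x\|\leq 1+\|z\|<\theta'$ whenever $\|z\|<\theta'-1$. Your ``homogeneity'' bound $\|z+tx\|\geq\theta\max\{\|z\|,|t|\}$ fails for the same reason, and no convexity argument can rescue the small-$\|z\|$ case of your case split. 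So no estimate uniform over $\Z$ can work. You have to use the specific shape of the elements $g+n(d+x)$, and two ideas are missing.

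For $n=\pm1$ one has $g+n(d+x)=n\bigl(x+(d+ng)\bigr)$. Moreover $\|d+ng\|>1$ for every $g\in G_\alpha$, because $\dist(d,G_\alpha)>1$ and $G_\alpha=-G_\alpha$. This is exactly where the assumption that $d$ is far from $G_\alpha$ enters, and your proposal never uses it for separation. Put $u=(d+ng)/\|d+ng\|\in S_\Z$. The function $s\mapsto\|x+su\|$ is convex, equals $1$ at $s=0$, and is at least $\theta>1$ at $s=1$ (since $-u\in S_\Z$). Hence it is at least $\theta$ for every $s\geq1$, in particular at $s=\|d+ng\|$.

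For $|n|\geq2$ you need the auxiliary fact $\dist(x,\Z)\geq\theta/2$. For $u\in S_\Z$, the function $f(s)=\|x-su\|$ is convex and $1$-Lipschitz with $f(0)=1$ and $f(1)\geq\theta$. So $f(s)\geq\max\{1-s,\,s+\theta-1\}\geq\theta/2$ on $[0,1]$, and $f(s)\geq\theta$ for $s\geq1$; since $u$ ranges over all of $S_\Z$, nonnegative $s$ suffice. Then $\|g+n(d+x)\|=|n|\,\|x+(d+g/n)\|\geq 2\cdot\theta/2=\theta$, and the factor $|n|\geq2$ is precisely what compensates the halving.

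With these two estimates the enlarged group is genuinely $\theta$-separated, so you do not need any $\theta'<\theta$ for separation. The only loss is the factor $(1+\e)$ in density, which is removed by rescaling as you indicate.
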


It will be apparent from the proof below that the result also holds when $\theta=1$. However, in this case, we don't obtain any improvement over the bound $\gamma^*(\X) \leq2$, which is already known from \cite{DRShilbert}. Therefore, we only consider the case that $\theta>1$.

\begin{proof} To begin with, we observe that \eqref{eq: x far from S_Z} implies that
\begin{equation} \label{eq: x far from Z}
    \dist(x, \Z)\geq \frac{\theta}{2}.
\end{equation}
In fact, for every $z\in S_\Z$, consider the function $f(t)\coloneqq \|x-tz\|$ ($t\in\R)$. We will show that $f(t)\geq \theta/2$ for $t\geq 0$, which implies \eqref{eq: x far from Z}. Plainly, $f$ is convex and $1$-Lipschitz; further, by assumption, $f(0)=1$ and $f(1)\geq \theta$. Since $f$ is convex and $\theta>1$, $f(t)\geq \theta$ when $t\geq 1$. Moreover, the fact that $f$ is $1$-Lipschitz yields that
\[ f(t)\geq \max\{1-t, t+\theta -1\} \qquad\mbox{for }t\in [0,1]. \]
Computing the minimum of the function $t\mapsto \max\{1-t, t+\theta -1\}$ gives $f(t)\geq \theta/2$ for $t\in [0,1]$, as required.

We now enter the core part of the proof and we construct the subgroup witnessing that $\gamma^*(\X) \leq \frac{2}{\theta}$. Choose a dense set $\{u_\alpha\}_{\alpha< \kappa}$ in $\X$ such that $u_0=0$. We shall construct, by transfinite induction, an increasing chain $(\D_\alpha) _{\alpha< \kappa}$ of subgroups of $\X$ such that, for all $\alpha<\kappa$, the following properties hold:

\begin{enumerate}
    \item\label{i: gen} $\D_\alpha$ is generated by at most $|\alpha|$ elements;
    \item\label{ii: septd} $\D_\alpha$ is $\theta$-separated;
    \item\label{iii: close} there exists $d\in \D_\alpha$ such that $\|d- u_\alpha\|\leq 1$.
\end{enumerate}

Once we have such subgroups at our disposal, we set $\D\coloneqq \bigcup_{\alpha< \kappa} \D_\alpha$. Because the chain $(\D_\alpha) _{\alpha< \kappa}$ is increasing, it is clear that $\D$ is a subgroup of $\X$, which is $\theta$-separated. Further, for every $\e>0$, \eqref{iii: close} and the fact that $\{u_\alpha\}_{\alpha< \kappa}$ is dense in $\X$ imply that $\D$ is $(1+ \e)$-dense in $\X$. By scaling, $\frac{1}{1+ \e}\D$ is $1$-dense and $\frac{\theta}{1+\e}$-separated, which, by \eqref{eq: 2/gamma(X)}, yields
\[ \frac{2}{\gamma^*(\X)}\geq \frac{\theta}{1+\e}. \]
As $\e>0$ is arbitrary, the conclusion follows.

Therefore, we only have to construct a chain $(\D_\alpha)_{\alpha< \kappa}$ as above, which we shall do by transfinite induction. To begin with, we set $\D_0\coloneqq \{0\}$, so that \eqref{iii: close} holds because $u_0=0$, while \eqref{i: gen} and \eqref{ii: septd} are obvious. Assume now, by transfinite induction, to have already constructed subgroups $(\D_\alpha)_{\alpha<\beta}$ with the above properties, for some $\beta< \kappa$. Consider the set $\bigcup_{\alpha< \beta} \D_\alpha$; arguing as before, it is clear that $\bigcup_{\alpha< \beta} \D_\alpha$ is a subgroup of $\X$, which is generated by at most $|\beta|$ elements and is $\theta$-separated. If there exists $d\in \bigcup_{\alpha< \beta} \D_\alpha$ such that $\|d- u_\beta\|\leq 1$, the induction step is concluded by just setting $\D_\beta\coloneqq \bigcup_{\alpha< \beta} \D_\alpha$. Hence, using that $\bigcup_{\alpha< \beta} \D_\alpha=- \bigcup_{\alpha< \beta} \D_\alpha$ we can assume without loss of generality that 
\begin{equation}\label{eq: norm >1}
    \|d+ u_\beta\|> 1 \qquad\mbox{for all } d\in \bigcup_{\alpha< \beta} \D_\alpha.
\end{equation}

Consider the subspace $\Z$ of $\X$ defined by
\[ \Z\coloneqq \overline{\spn}\left\{\bigcup_{\alpha< \beta} \D_\alpha, u_\beta \right\}. \]
In the light of \eqref{i: gen}, we see that $\dens(\Z)\leq |\beta|< \kappa$, therefore our assumption implies the existence of a unit vector $x_\beta\in S_\X$ such that $\dist(x_\beta, S_\Z)\geq \theta$. By the argument at the beginning of the proof, \eqref{eq: x far from Z} holds as well. We now define the subgroup
\[ \D_\beta\coloneqq \left(\bigcup_{\alpha< \beta} \D_\alpha\right)\oplus (u_\beta- x_\beta)\ZZ. \]
The validity of \eqref{iii: close} is clear, because the vector $u_\beta- x_\beta\in \D_\beta$ has distance $1$ from $u_\beta$, and \eqref{i: gen} is also immediate (when $\beta$ is a finite ordinal, $\bigcup_{\alpha< \beta} \D_\alpha= \D_{\beta-1}$ is generated by at most $\beta-1$ elements, so \eqref{i: gen} also holds in this case). Thus, we only have to check that $\D_\beta$ is $\theta$-separated, equivalently (as $\D_\beta$ is a subgroup), that every non-zero element of $\D_\beta$ has norm at least $\theta$.

Take any non-zero element $d+k(u_\beta- x_\beta)\in \D_\beta$, where $d\in \bigcup_{\alpha<\beta} \D_\alpha$ and $k\in \ZZ$. If $k=0$, then $\|d\|\geq \theta$ by \eqref{ii: septd} of the inductive assumption; hence, we assume that $k$ is non-zero. Further, as $d\in \bigcup_{\alpha<\beta} \D_\alpha$ if and only if $-d\in \bigcup_{\alpha<\beta} \D_\alpha$, we can assume that $k>0$. We distinguish two cases depending on whether $k=1$ or $k\geq 2$.
\begin{itemize}
    \item If $k=1$, we use \eqref{eq: x far from S_Z} to show that $\|d+(u_\beta- x_\beta)\|= \|x_\beta- (d+ u_\beta)\|\geq \theta$. Consider the function
    \[ f(t)\coloneqq \left\| x_\beta - t\frac{d+ u_\beta}{\|d+ u_\beta\|} \right\| \qquad t\in\R. \]
    Then, $f(0)=1$ and, by \eqref{eq: x far from S_Z}, $f(1)\geq \theta>1$, since $d+ u_\beta\in \Z$. The convexity of $f$ and the fact that $\|d+ u_\beta\|> 1$ by \eqref{eq: norm >1} then yield
    \[ f(\|d+ u_\beta\|)= \|x_\beta- (d+ u_\beta)\|\geq \theta. \]
    \item Instead, if $k\geq 2$ we use \eqref{eq: x far from Z}. Notice that
    \[ \|d+k(u_\beta- x_\beta)\|= k \left\| \left(\textstyle{\frac{d}{k}}+ u_\beta\right)- x_\beta \right\|\geq 2\left\| \left(\textstyle{\frac{d}{k}}+ u_\beta\right)- x_\beta \right\|. \]
    Since the vector $\frac{d}{k}+ u_\beta$ belongs to $\Z$, by \eqref{eq: x far from Z} its distance from $x_\beta$ is at least $\frac{\theta}{2}$. Thus, $\|d+k(u_\beta- x_\beta)\|\geq \theta$ in this case as well.
\end{itemize}
Hence, $\D_\beta$ is $\theta$-separated, which concludes the induction step and the proof.
\end{proof}

\subsection{Octahedral and \texorpdfstring{$\C(\K)$}{C(K)} spaces}\label{sec: gamma=1}
We now move to the second part of the section, where we apply the above general result and determine two classes of normed spaces having the best possible value for $\gamma^*$, namely $\gamma^*(\X)= 1$ (which obviously implies that $\gamma(\X)=1$ as well). The first class is that of octahedral normed spaces and the corresponding result is just a direct consequence of \Cref{thm: subgroup theta}. For the second class, that of $\C(\K)$ spaces for zero-dimensional $\K$, we generalise the construction of the tiling of $c_0$ via the even integers grid (see also \Cref{prop: L_infty}).

We begin by recalling the definition of octahedral normed space. A normed space $\X$ is \emph{octahedral} if, for every finite-dimensional subspace $\Z$ of $\X$ and every $\e>0$, there is $x\in S_\X$ such that
\[ \|z+\lambda x\|\geq (1-\e)(\|z\| + |\lambda|) \]
for every $z\in \Z$ and every $\lambda\in \R$.

Recently, in \cite{CLL}*{Definition 5.3} the definition of octahedral normed space was generalised to encompass infinite-dimensional subspaces as well. For an infinite cardinal $\kappa$, a normed space $\X$ is \emph{$(<\kappa)$-octahedral} if for every subspace $\Z$ of $\X$ with $\dens(\Z)< \kappa$ and every $\e>0$ , there is $x\in S_\X$ such that
\[ \|z+\lambda x\|\geq (1-\e)(\|z\| + |\lambda|) \]
for every $z\in \Z$ and every $\lambda\in \R$. Clearly, $(<\omega)$-octahedrality merely reduces to octahedrality (recall that, by our convention, $\dens(\Z)< \omega$ means that $\Z$ is finite dimensional).

It is clear that every $(<\kappa)$-octahedral normed space $\X$ of density $\kappa$ satisfies the assumptions of \Cref{thm: subgroup theta} with $\theta= 2-\e$, for every $\e>0$. Therefore, the following theorem is an immediate consequence of \Cref{thm: subgroup theta}.

\begin{theorem}\label{thm: octahedral} Every $(<\kappa)$-octahedral normed space $\X$ of density $\kappa$ satisfies $\gamma^*(\X)=1$. In particular, every separable octahedral normed space $\X$ satisfies $\gamma^*(\X)=1$.
\end{theorem}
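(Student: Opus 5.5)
The plan is to apply \Cref{thm: subgroup theta} with $\theta=2-\e$ for every small $\e>0$, and then let $\e\to 0$.

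First I verify its hypothesis. Fix $\e\in(0,1)$ and a subspace $\Z$ of $\X$ with $\dens(\Z)<\kappa$. The definition of $(<\kappa)$-octahedrality, applied with parameter $\e/2$, provides $x\in S_\X$ such that
\[ \|z+\lambda x\|\geq (1-\e/2)(\|z\|+|\lambda|) \qquad\text{for all } z\in\Z,\ \lambda\in\R. \]
Now take any $z\in S_\Z$. Choosing $\lambda=-1$ and using $\|-z\|=1$, we get
\[ \|x-z\|=\|{-z}+x\|\geq (1-\e/2)\cdot 2 = 2-\e. \]
Hence $\dist(x,S_\Z)\geq 2-\e$, which is exactly \eqref{eq: x far from S_Z} with $\theta=2-\e>1$.

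The remaining hypotheses of \Cref{thm: subgroup theta} also hold. Infinite-dimensionality is automatic, since octahedrality applied to $\Z=\{0\}$, and then inductively to finite-dimensional $\Z$, yields vectors outside every finite-dimensional subspace. The density of $\X$ is $\kappa$ by assumption. The theorem therefore gives $\gamma^*(\X)\leq 2/(2-\e)$ for every $\e\in(0,1)$. Letting $\e\to0$ gives $\gamma^*(\X)\leq1$. The reverse inequality $\gamma^*(\X)\geq\gamma(\X)\geq1$ is trivial, so $\gamma^*(\X)=1$.

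For the separable case, take $\kappa=\omega$. Since $(<\omega)$-octahedrality is precisely octahedrality (by the paper's convention that $\dens(\Z)<\omega$ means $\Z$ is finite dimensional), a separable octahedral space has density $\omega$ and is $(<\omega)$-octahedral, so the first part applies.

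There is no real obstacle here. The only point needing care is that the octahedral inequality with $\lambda=-1$ is applied to unit vectors $z$, which gives separation from $S_\Z$ rather than from all of $\Z$; this is exactly what \Cref{thm: subgroup theta} requires.
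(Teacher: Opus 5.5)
Your proof is correct and is exactly the paper's argument: the paper notes that a $(<\kappa)$-octahedral space of density $\kappa$ satisfies the hypothesis of \Cref{thm: subgroup theta} with $\theta=2-\e$ for every $\e>0$, and it deduces the theorem immediately from that result. Your proposal makes explicit two steps the paper calls clear. The first is the separation estimate, obtained by taking $\lambda=-1$, or equivalently by applying the octahedral inequality to $-z$ with $\lambda=1$. The second is the remark that octahedrality forces infinite-dimensionality, so that a separable octahedral space has density exactly $\omega$.
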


As we will see later, the separability of $\X$ can not be dispensed with, in the second clause of the previous theorem. In fact, we will see in \Cref{ex: octa gamma>1} that there exist octahedral Banach spaces $\X$ of density $\omega_1$ and such that $\gamma(\X)$ is as close to $2$ as we wish. In \Cref{sec: gamma=2} we will even give an example of an octahedral Banach space $\X$ (having density $\omega_\omega$) such that $\gamma(\X)=2$.

\begin{remark}\label{rmk: <kappa octa examples} Beside the examples and classes of octahedral Banach spaces that we mentioned in the Introduction, the previous theorem allows us to obtain more examples of Banach spaces $\X$ such that $\gamma^*(\X)=1$, by gleaning examples of $(<\kappa)$-octahedral Banach spaces from \cites{ACLLR, AMR_ell1(kappa), CLL}. Further, it can be directly checked that Banach spaces $\C(\{0,1\}^\kappa)$ are $(<\kappa)$-octahedral and that $\X\oplus_1 \Y$ is $(<\kappa)$-octahedral whenever $\X$ is $(<\kappa)$-octahedral ($\Y$ being arbitrary). Moreover, for every measure space $(\M, \Sigma, \mu)$, the Banach space $L_1(\mu)$ is $(<\kappa)$-octahedral, where $\kappa$ is the density of $L_1(\mu)$. Since, to the best of our knowledge, this perhaps folklore result has not appeared in the literature, we will prove it (in a more general form) in \Cref{prop: L_p p-octa}. Finally, in order to connect with the second part of this section, we also recall here that $\C(\K)$ is octahedral if (and only if) $\K$ is perfect (see the references in the proof of \Cref{lem: octa iff K perfect}); therefore, $\gamma^*(\C(\K))=1$ when $\K$ is a perfect compact metric space.
\end{remark}

We now move to the class of $\C(\K)$ spaces. Recall that a compact topological space is \emph{zero-dimensional} if it admits a basis consisting of clopen sets (equivalently, it is \emph{totally disconnected}, namely all its connected components are singletons). Further, a compact topological space is \emph{extremally disconnected} if the closure of every open subset is open (and hence clopen). Equivalently, disjoint open sets have disjoint closures.

\begin{theorem}\label{thm: C(K)} If $\K$ is zero-dimensional, then $\gamma^*(\C(\K))=1$. Further, if $\K$ is extremally disconnected, $\C(\K)$ admits a lattice tiling by balls.
\end{theorem}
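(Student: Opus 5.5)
The plan is to generalise the even-integer grid $2\ZZ^{\N}$ in $c_0$ (or $\ell_\infty$) by using integer-valued functions that are constant on the pieces of clopen partitions.

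\emph{Extremally disconnected case.} Here we can take the whole group of even-integer-valued continuous functions,
\[ \Lambda\coloneqq \{f\in \C(\K)\colon f(\K)\subseteq 2\ZZ\}. \]
Each $f\in\Lambda$ has finite range and is locally constant, so $\Lambda$ is a subgroup. It is $2$-separated, since distinct elements differ by at least $2$ at some point. To see that $\Lambda$ is $1$-dense, fix $g\in \C(\K)$. We need $f\in \Lambda$ with $\|g-f\|\leq 1$, that is, $f(k)\in[g(k)-1,g(k)+1]$ for every $k\in\K$. Since $g$ is bounded, only finitely many even integers $2n$ are relevant. We build $f$ as follows.
\begin{itemize}
\item Cover $\K$ by the closed sets $F_n\coloneqq\{|g-2n|\leq 1\}$, and also by the open sets $U_n\coloneqq\{|g-2n|<1\}$.
\item Extremal disconnectedness makes $\overline{U_n}$ clopen, and $\overline{U_n}\subseteq F_n$.
\item The $U_n$ miss exactly the points where $g$ is an odd integer. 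Adjacent closures may overlap, so we disjointify: take $V_n\coloneqq \overline{U_n}\setminus\bigcup_{m<n}\overline{U_m}$, which are clopen.
\end{itemize}
The points not covered by any $\overline{U_n}$ lie in the set $\{g=2m+1\}$ minus the closures of $U_m$ and $U_{m+1}$. This set is closed, and it is open because it equals $\K\setminus\bigcup_n\overline{U_n}$ with the union finite. We assign such points value $2m$. The remaining step is to check that the resulting piecewise constant $f$ satisfies $|g-f|\leq 1$ everywhere. This gives a lattice tiling, because the balls $f+B$ are non-overlapping and cover $\C(\K)$.

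\emph{Zero-dimensional case.} Here we expect only $\gamma^*=1$, via approximate density: we aim for $(1+\e)$-density and conclude using \eqref{eq: 2/gamma(X)}. We keep the same $\Lambda$. Given $g$ and $\e>0$, compactness and zero-dimensionality give a finite clopen partition $\{C_1,\dots,C_m\}$ of $\K$ on each piece of which $g$ oscillates by less than $\e$. On each $C_j$ we pick an even integer within $1+\e$ of all values of $g$ on $C_j$, which is possible because the oscillation is below $\e$. The resulting $f$ lies in $\Lambda$ and satisfies $\|g-f\|\leq 1+\e$. So $\Lambda$ is $2$-separated and $(1+\e)$-dense for every $\e>0$, whence $\gamma^*(\C(\K))\leq 1$.

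The main obstacle is the exact $1$-density in the extremally disconnected case, namely handling points where $g$ takes odd integer values. My first attempt will be the finite-partition argument above. If that fails, I will fall back on the Stone-space identification of $\C(\K)$ with a Dedekind-complete lattice and use lattice completeness to choose the floor-type function.
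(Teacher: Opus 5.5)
Your proposal is correct and follows essentially the same route as the paper: the subgroup $\C(\K;2\ZZ)$, a small-oscillation clopen partition in the zero-dimensional case, and, in the extremally disconnected case, the clopen closures of $\{|g-2n|<1\}$ together with the clopen leftover set $\K_0$ on which $g$ takes odd integer values (the paper even records your Dedekind-completeness fallback as a remark). Two small corrections: in an extremally disconnected space, disjoint open sets already have disjoint closures, so your disjointification step is unnecessary though harmless; and the openness of each piece $\{x\in\K_0\colon g(x)=2m+1\}$ follows because these closed pieces form a finite partition of the clopen set $\K_0$, or alternatively from noting that your $f$ equals $g-1$ on $\K_0$ and is therefore continuous.
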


Recall that by the Goodner--Kelley--Nachbin theorem (see, \emph{e.g.}, \cite{AK}*{Section 4.3}) a Banach space is $1$-injective if and only if it is isometrically isomorphic to $\C(\K)$, for some extremally disconnected $\K$; therefore, the above theorem implies in particular the appealing fact that $1$-injective Banach spaces admit a lattice tiling by balls.

\begin{proof} In both clauses of the proof we consider the subgroup $\C(\K;2\ZZ)$ of $\C(\K)$ comprising all continuous functions on $\K$ with values in the even integers $2\ZZ$. Plainly, $\C(\K;2\ZZ)$ is $2$-separated. In order to clinch the proof it is enough to prove the following two claims: if $\K$ is zero-dimensional, $\C(\K;2\ZZ)$ is $(1+\e)$-dense for every $\e>0$, and if $\K$ is extremally disconnected, then $\C(\K;2\ZZ)$ is $1$-dense.

For the first case, suppose that $\K$ is zero-dimensional and fix any function $f\in \C(\K)$. Because $\K$ is zero-dimensional, there are a partition $\{U_1,\dots,U_n\}$ of $\K$ into clopen sets and real numbers $\lambda_1\dots, \lambda_n$ such that 
\[ |f(x) - \lambda_j|< \e \;\mbox{ for all }\; x\in U_j \]
(take a clopen cover such that $f$ has small oscillation on each clopen set in the cover; pass to a finite subcover and use the fact that clopen sets constitute a Boolean algebra to obtain a partition). For each $j=1,\dots,n$ there is an even integer $k_j\in 2\ZZ$ such that $|\lambda_j- k_j|\leq 1$. Then, the function
\[ g\coloneqq \sum_{j=1}^n k_j\cdot \bone_{U_j} \]
belongs to $\C(\K;2\ZZ)$ and $\|f-g\|\leq 1+\e$, as desired.

Next, we assume that $\K$ is extremally disconnected and fix $f\in \C(\K)$. The sets 
\[ V_k\coloneqq \{x\in \K\colon 2k-1< f(x)< 2k+1\} \qquad (k\in\ZZ) \]
are open and disjoint, hence their closures $\overline{V_k}$ are disjoint clopen sets. Further, on $\overline{V_k}$, $f$ attains values in the interval $[2k-1, 2k+1]$. Notice that, as $f$ is bounded, only finitely many $\overline{V_k}$ are non-empty. Hence, the set $\K_0\coloneqq \K\setminus \bigcup_{k\in\ZZ} \overline{V_k}$ is clopen; further, we have
\[ \K_0= \bigcup_{k\in\ZZ} \{x\in \K_0\colon f(x)=2k+1\} \]
(as above, notice that this is actually a finite union). Thus, the clopen set $\K_0$ is expressed as a disjoint union of finitely many closed sets; therefore, all these sets are in fact clopen. We may thus consider the clopen sets
\[ U_k\coloneqq \overline{V_k}\cup \{x\in \K_0\colon f(x)= 2k+1\}; \]
as before, $f$ has values in the interval $[2k-1, 2k+1]$ on the set $U_k$. Further, $\{U_k\}_{k\in\ZZ}$ is a partition of $\K$ in finitely many clopen sets. Thus, the function
\[ g\coloneqq \sum_{k\in\ZZ} 2k\cdot \bone_{U_k} \]
belongs to $\C(\K;2\ZZ)$ and clearly $\|f-g\|\leq 1$, thereby concluding the proof.
\end{proof}

\begin{remark} An alternative, slightly shorter but not self-contained, proof of the second claim could be obtained by using the fact that $\C(\K)$ is \emph{order-complete} (namely, every subset of $\C(\K)$ with an upper bound admits a least upper bound) if and only if $\K$ is extremally disconnected. We just sketch the argument here. For $f\in \C(\K)$, let $h_0$ be the least upper bound of the set
\[ \mathcal{L}\coloneqq \{h\in \C(\K;\ZZ\setminus 2\ZZ)\colon h\leq f \}. \]
It is not hard to prove that $h_0(x)\leq f(x)\leq h_0(x)+2$ for all $x\in \K$ (in fact, otherwise there would be an integer $k\in \ZZ$ and a clopen subset $U$ of $\K$ such that $f(x)> 2k+1> h_0(x)$ for all $x\in U$, which readily contradicts the fact that $h_0$ is an upper bound). Then, the function $g\coloneqq h_0 + \bone_\K\in \C(\K;2\ZZ)$ satisfies $g-\bone_\K\leq f\leq g+ \bone_\K$, whence $\|f-g\|\leq 1$.
\end{remark}

\begin{remark} The above construction clearly generalises the construction of the tiling in $c_0$ or $\ell_\infty$ by balls of radius $1$ and centers the even integers grid. It is of course natural to wonder whether the packing obtained in the above proof is a tiling also when $\K$ is only assumed to be zero-dimensional. As it turns out, this is not the case, because $\C(\K;2\ZZ)$ might fail to be $1$-dense. We illustrate this in the Banach space $c$ of convergent sequences. Consider the sequence $x= (x_n)_{n=1}^\infty$ given by $x_n= 1+ (-1)^n/n$. Suppose that there is a $2\ZZ$-valued sequence $y= (y_n)_{n=1}^\infty \in c$ such that $\|x-y\|\leq 1$. Then $y_{2n}=2$, while $y_{2n+1}=0$, contradicting the fact that $(y_n)_{n=1}^\infty$ must admit a limit. Notice that the same construction actually works for every $\K$ that contains non-trivial convergent sequences.
\end{remark}

\section{\texorpdfstring{$\phi$}{phi}-octahedral normed spaces and lower bounds on \texorpdfstring{$\gamma(\X)$}{gamma(X)}} \label{sec: phi-octahedral}
In this section we introduce the notion of $\phi$-octahedral normed space, by replacing the linear growth in the `orthogonal' direction with a growth lower bounded by a fixed modulus $\phi$. We study the stability of $\phi$-octahedrality under $\ell_p$-sums and we then give a lower bound for $\gamma(\X)$ when $\X$ is $\phi$-octahedral (\Cref{thm: gamma of phi-octa}). As an application of this lower bound, in \Cref{sec: gamma=2} we also give examples of Banach spaces $\X$ such that $\gamma(\X)=2$.

\begin{definition}\label{def: modulus} We say that  $\phi\colon [0,\infty)\to [0,\infty)$ is a \emph{modulus} if 
\begin{enumerate}[label=($\phi$\arabic*), ref=$\phi$\arabic*]
    \item \label{phi_i: cont 0} $\phi(0) =0$ and $\phi$ is continuous in $0$, and
    \item \label{phi_i: phi/t} the function $t\mapsto \frac{\phi(t)}{t}$ is  non-decreasing on $(0,\infty)$.
\end{enumerate}
The modulus $\phi$ is a \emph{positive modulus} if $\phi(t)>0$ when $t>0$.
\end{definition}

As an example, every convex function $\phi\colon [0,\infty)\to [0,\infty)$ that vanishes at $0$ is a modulus (but the converse is not true). Further, note that \eqref{phi_i: phi/t} implies in particular that $\phi$ is non-decreasing.  We begin with a reformulation of this condition.

\begin{lemma}\label{lemma: basic modulus} Condition \eqref{phi_i: phi/t} is equivalent to
\begin{enumerate}[label=\rm{($\phi$\arabic*)}, ref=$\phi$\arabic*] \setcounter{enumi}{2}
\item \label{phi_i: modulus iff} $\phi(\lambda t)\leq \lambda \phi(t)$ for all $\lambda\in (0,1)$ and $t\in (0,\infty)$.
\end{enumerate}
As a consequence, $\psi\circ \phi$ is a (positive) modulus whenever $\psi$ and $\phi$ are.
\end{lemma}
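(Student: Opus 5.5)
The equivalence is a direct rewriting of the definitions, and I will prove the two implications separately.

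\emph{From (\ref{phi_i: phi/t}) to (\ref{phi_i: modulus iff}).} Fix $\lambda\in(0,1)$ and $t>0$. Since $\lambda t< t$, monotonicity of $s\mapsto \phi(s)/s$ gives
\[ \frac{\phi(\lambda t)}{\lambda t}\leq \frac{\phi(t)}{t}. \]
Multiplying by $\lambda t>0$ yields $\phi(\lambda t)\leq \lambda\phi(t)$.

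\emph{From (\ref{phi_i: modulus iff}) to (\ref{phi_i: phi/t}).} Given $0<s<t$, write $s=\lambda t$ with $\lambda\coloneqq s/t\in(0,1)$. Then (\ref{phi_i: modulus iff}) gives $\phi(s)\leq \frac{s}{t}\phi(t)$. Dividing by $s$ gives $\phi(s)/s\leq \phi(t)/t$, which is (\ref{phi_i: phi/t}).

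For the composition, let $\psi,\phi$ be moduli; I check the three defining properties of $\psi\circ\phi$ in turn.

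\begin{enumerate}
\item \emph{Condition (\ref{phi_i: cont 0}).} We have $\psi(\phi(0))=\psi(0)=0$. Continuity at $0$ follows because $\phi(t)\to 0$ as $t\to0$ and $\psi$ is continuous at $0$.

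\item \emph{Condition (\ref{phi_i: modulus iff}), hence (\ref{phi_i: phi/t}).} Here I use the remark after the definition that (\ref{phi_i: phi/t}) forces $\psi$ to be non-decreasing. Fix $\lambda\in(0,1)$ and $t>0$, and split into cases according to whether $\phi(t)$ vanishes.
\begin{itemize}
\item If $\phi(t)=0$: since $\phi$ is non-decreasing and nonnegative, $\phi(\lambda t)=0$. Hence both sides of the desired inequality $\psi(\phi(\lambda t))\leq\lambda\psi(\phi(t))$ equal $\psi(0)=0$.
\item If $\phi(t)>0$: from $\phi(\lambda t)\leq\lambda\phi(t)$ and monotonicity of $\psi$ we get $\psi(\phi(\lambda t))\leq \psi(\lambda\phi(t))$. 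Applying (\ref{phi_i: modulus iff}) for $\psi$ at the positive point $\phi(t)$ then gives $\psi(\lambda\phi(t))\leq\lambda\psi(\phi(t))$.
\end{itemize}

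\item \emph{Positivity.} If both moduli are positive and $t>0$, then $\phi(t)>0$, so $\psi(\phi(t))>0$.
\end{enumerate}

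The only point needing care is the first case of item 2. Property (\ref{phi_i: modulus iff}) for $\psi$ is stated only for arguments in $(0,\infty)$, so the case $\phi(t)=0$ cannot be absorbed into the second case and must be handled separately as above.
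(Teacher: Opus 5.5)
Your proof is correct and follows the paper's argument: the equivalence uses the same rescaling $\lambda=s/t$, and the composition uses the same chain $\psi(\phi(\lambda t))\leq\psi(\lambda\phi(t))\leq\lambda\psi(\phi(t))$ via monotonicity of $\psi$. Your separate case $\phi(t)=0$ is harmless but not needed, since $\psi(0)=0$ makes \eqref{phi_i: modulus iff} trivially true at $t=0$ as well.
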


\begin{proof} If \eqref{phi_i: phi/t} holds, the fact that $\lambda t< t$ implies that $\frac{\phi(\lambda t)}{\lambda t}\leq \frac{\phi(t)}{t}$, whence $\phi(\lambda t)\leq \lambda \phi(t)$. Conversely, fix $s,t\in (0,\infty)$ with $s<t$ and set $\lambda\coloneqq s/t\in (0,1)$. Then $\phi(s)= \phi(\lambda t)\leq \frac{s}{t} \phi(t)$, and $\frac{\phi(s)}{s}\leq \frac{\phi(t)}{t}$.

For the second part, note that, for $\lambda\in (0,1)$, $\psi(\phi(\lambda t))\leq \psi(\lambda \phi(t))\leq \lambda \psi( \phi(t))$, where we used the fact that $\psi$ is non-decreasing. The other assertions are clear.
\end{proof}

The most important examples of moduli for us will be the moduli $\p_\X$ (\Cref{def: phi_X}) and $\phi_p$ defined by $\phi_p(t)\coloneqq (1+ t^p)^{1/p}-1$ related to the $\ell_p$ spaces (\Cref{ex: phi_p}). The fact that $\phi_p$ is indeed a positive modulus is an easy computation that we omit.

\begin{definition}\label{def: phi-octa} Let $\kappa$ be an infinite cardinal and $\phi$ a positive modulus. A normed space $\X$ is \emph{$(<\kappa)$-$\phi$-octahedral} if, for every $\e>0$ and every subspace $\Z$ of $\X$ with $\dens(\Z)< \kappa$, there exists a vector $x\in S_\X$ such that
\begin{equation} \label{eq: phi-orthogonal}
	\|z+ \lambda x\|\geq (1-\e)\big(1+ \phi(|\lambda|) \big) \qquad \mbox{for all }\, z\in S_\Z \mbox{ and } \lambda\in \R.
\end{equation}
When $\kappa=\omega$, we just say that $\X$ is \emph{$\phi$-octahedral}. We will also say that a vector $x$ satisfying \eqref{eq: phi-orthogonal} is \emph{$(\phi,\e)$-orthogonal} to $\Z$ .
\end{definition}

Before moving to our main results, we collect some basic remarks and examples.
\begin{remark} The notion of $(<\kappa)$-$\phi$-octahedrality is much more general than mere $(<\kappa)$-octahedrality. In fact:
\begin{enumerate}
    \item $(<\kappa)$-$\phi$-octahedrality is a generalisation of $(<\kappa)$-octahedrality. Not only the definition reduces to the notion of $(<\kappa)$-octahedrality when $\phi$ is the identity, but also every $(<\kappa)$-octahedral normed space is $(<\kappa)$-$\phi$-octahedral for all moduli $\phi$ such that $\phi(t)\leq t$ for all $t\in [0,\infty)$. (Notice that the existence of some $\phi$-octahedral normed space implies that $\phi(t)\leq t$ for all $t\in [0,\infty)$.)
    \item On the other hand, all uniformly convex normed spaces of density $\kappa$ are $(<\kappa)$-$\phi$-octahedral (\Cref{thm: UC implies phi-octa}). Therefore, $\phi$-octahedral spaces can be quite far from octahedral ones\footnote{And, as far as this manuscript is a preprint, we explicitly welcome criticisms or alternative terminology.}. Further, $\ell_1$ is, say, $\phi_2$-octahedral, whence $\phi_p$-octahedral Banach spaces need not contain a copy of $\ell_p$, nor be reflexive. On the other hand, it is not inconceivable that every Banach space containing $\ell_p(\kappa)$ might admit a $(<\kappa)$-$\phi_p$-octahedral norm (which would generalise a result from \cites{CLL, AMR_ell1(kappa)}).
\end{enumerate}
\end{remark}

Next, we rephrase the notion of $\phi_p$-octahedrality and show that $\ell_p(\kappa)$ is $(<\kappa)$-$\phi_p$-octahedral. The more general fact that $L_p(\mu)$ is $(<\kappa)$-$\phi_p$-octahedral, where $\kappa$ is the density of $L_p(\mu)$, will be proved in \Cref{prop: L_p p-octa}.

\begin{example}\label{ex: phi_p} In the case of $\phi_p$, the inequality in \eqref{eq: phi-orthogonal} rewrites as
\[ \|z+ \lambda x\|\geq (1-\e)\big( 1+ |\lambda|^p \big)^{1/p} \qquad \mbox{for all }\, z\in S_\Z \mbox{ and } \lambda\in \R. \]
By homogeneity, this is equivalent to 
\begin{equation}\label{eq: p-octahedral}
    \|z+ \lambda x\|\geq (1-\e)\big( \|z\|^p+ |\lambda|^p \big)^{1/p} \qquad \mbox{for all }\, z\in \Z \mbox{ and } \lambda\in \R.
\end{equation}
Thus, $\X$ is $(<\kappa)$-$\phi_p$-octahedral if and only if for every $\e>0$ and every subspace $\Z$ of $\X$ with $\dens(\Z)< \kappa$, there exists $x\in S_\X$ such that \eqref{eq: p-octahedral} holds. Notice further that, again by homogeneity, in \eqref{eq: p-octahedral} it is enough to assume that $\lambda= 1$.
\end{example}

As a consequence of this, we observe that $\ell_p(\kappa)$ is $(<\kappa)$-$\phi_p$-octahedral (the proof is identical to the case $p=1$, and we just give it for the sake of completeness).

\begin{lemma}\label{lemma: ell_p octa} Let $\kappa$ be an infinite cardinal, $(\X_\alpha)_{\alpha< \kappa}$ be a family of non-zero normed spaces, and $p\in [1,\infty)$. Then $(\bigoplus_{\alpha< \kappa} \X_\alpha)_{\ell_p}$ is $(<\kappa)$-$\phi_p$-octahedral. In particular, $\ell_p(\kappa)$ is $(<\kappa)$-$\phi_p$-octahedral.
\end{lemma}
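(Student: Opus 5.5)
By \Cref{ex: phi_p}, it suffices to prove the following. Fix $\e>0$ and a subspace $\Z$ of $\X\coloneqq(\bigoplus_{\alpha<\kappa}\X_\alpha)_{\ell_p}$ with $\dens(\Z)<\kappa$. We will find $x\in S_\X$ such that
\[ \|z+x\|\geq (1-\e)\big(\|z\|^p+1\big)^{1/p} \qquad\text{for all } z\in\Z; \]
the case $\lambda=1$ is enough, by homogeneity.

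The idea is to place $x$ in a coordinate that the elements of $\Z$ essentially do not see. Choose a dense subset $\{z_i\}_{i\in I}$ of $\Z$ with $|I|=\dens(\Z)<\kappa$; when $\Z$ is finite dimensional, take $I$ countable instead, which is still $<\kappa$ because $\kappa$ is infinite. Each $z_i$ has countable support $\supp z_i\coloneqq\{\alpha\colon z_i(\alpha)\neq 0\}$, since $p<\infty$. Hence the set $A\coloneqq\bigcup_{i\in I}\supp z_i$ has cardinality at most $|I|\cdot\omega<\kappa$. This step needs $\kappa$ infinite and $|I|<\kappa$, so that $|I|\cdot\omega=\max\{|I|,\omega\}<\kappa$ when $\kappa>\omega$. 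When $\kappa=\omega$, $\Z$ is finite dimensional, so $I$ can be taken finite after choosing a basis; the supports are then countable, and we need some care (see below).

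Since $A$ is a small subset of $\kappa$, we can pick $\beta\in\kappa\setminus A$ and a unit vector $x_\beta\in S_{\X_\beta}$; this uses that $\X_\beta\neq\{0\}$. Let $x$ be the vector with $x_\beta$ in the $\beta$-th coordinate and $0$ elsewhere. For each $i$, the supports of $z_i$ and $x$ are disjoint, so
\[ \|z_i+x\|^p=\|z_i\|^p+1. \]
Since the set of $z$ with $\|z+x\|^p=\|z\|^p+1$ is closed, every $z\in\overline{\{z_i\}}=\Z$ satisfies this equality. This gives the inequality even with $\e=0$.

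The only delicate point is the cardinality count when $\kappa=\omega$. In that case $\Z$ is finite dimensional and spanned by finitely many vectors, but the union of their supports is countable, which can be all of $\omega$. So we cannot expect an exact disjoint coordinate, and this is where the $\e$ is needed. Approximate a basis $e_1,\dots,e_n$ of $\Z$ in such a way that all elements of $\Z$ of norm at most $1$ have tails, beyond some finite set $F\subseteq\omega$, of norm at most $\delta$. This uses compactness of $B_\Z$ together with the tail estimate for each basis vector. Then pick $\beta\notin F$ and $x$ as before. For $z\in S_\Z$, write $z=z\cut F+z\cut F^c$. Then
\[ \|z+x\|\geq \|z\cut F+x\|-\delta=(\|z\cut F\|^p+1)^{1/p}-\delta\geq\big((1-\delta)^p+1\big)^{1/p}-\delta, \]
which is at least $(1-\e)2^{1/p}$ for $\delta$ small enough. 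By homogeneity (\Cref{ex: phi_p}), this bound for $z\in S_\Z$ yields the required inequality for every $z\in\Z$.

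The general $\kappa$ can be handled either by the exact argument above, when $\kappa>\omega$, or by this approximate argument. The statement for $\ell_p(\kappa)$ is the special case $\X_\alpha=\R$.
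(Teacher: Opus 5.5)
Your uncountable case is correct and is the paper's argument: you choose a coordinate outside the union of the countable supports of a small dense set, and you get the equality $\|z+x\|^p=\|z\|^p+1$ on all of $\Z$. The uniform finite set $F$ in the case $\kappa=\omega$ is also fine, by compactness of $B_\Z$. The gap is in the last step of the case $\kappa=\omega$. There you prove $\|z+x\|\geq(1-\e)2^{1/p}$ only for $z\in S_\Z$. You then claim that homogeneity gives $\|z+x\|\geq(1-\e)(\|z\|^p+1)^{1/p}$ for every $z\in\Z$, and it does not.

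Homogeneity lets you normalise \emph{one} of the two quantities $\|z\|$ and $\lambda$, not both. The reduction in \Cref{ex: phi_p} is to $\lambda=1$ with $z$ ranging over all of $\Z$, or equivalently to $z\in S_\Z$ with $\lambda$ ranging over all of $\R$. You have checked only the instance $\|z\|=|\lambda|=1$, that is, one value of the ratio $\|z\|/|\lambda|$. It says nothing about $\|z+\lambda x\|$ for $z\in S_\Z$ and, say, $\lambda=10$ or $\lambda=1/10$. Since $x$ is fixed, you cannot rescale $z$ onto $S_\Z$ without also rescaling $x$.

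Your own ingredients repair this. The uniform tail bound on $B_\Z$ is homogeneous: $\|z\cut F^c\|\leq\delta\|z\|$, and hence $\|z\cut F\|\geq(1-\delta)\|z\|$, for every $z\in\Z$. Using disjointness of supports as before, for all $z\in\Z$,
\[ \|z+x\|\geq\big(\|z\cut F\|^p+1\big)^{1/p}-\delta\|z\|\geq(1-\delta)\big(\|z\|^p+1\big)^{1/p}-\delta\big(\|z\|^p+1\big)^{1/p}=(1-2\delta)\big(\|z\|^p+1\big)^{1/p}, \]
which is what is required once $2\delta\leq\e$.

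The paper handles this differently. For $\|z\|\geq M$ the bound is trivial from $\|z+x\|\geq\|z\|-1$. On the compact set $\{z\in\Z\colon \|z\|\leq M\}$ it chooses a single coordinate $n$ that works uniformly. Your finite set $F$ plays the role of that compactness step. Once stated homogeneously, it covers all norms of $z$ at once and removes the need for the split.
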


\begin{proof} Fix $p\in [1,\infty)$, an infinite cardinal $\kappa$, non-zero normed spaces $(\X_\alpha)_{\alpha< \kappa}$, and let $\Z$ be a subspace of $(\bigoplus_{\alpha< \kappa} \X_\alpha)_{\ell_p}$ with $\dens(\Z)< \kappa$. We distinguish two cases depending on whether $\kappa= \omega$ or $\kappa\geq \omega_1$.

If $\kappa$ is uncountable, because every vector of $(\bigoplus_{\alpha< \kappa} \X_\alpha)_{\ell_p}$ is countably supported and $\dens(\Z)< \kappa$, there exists $\alpha\in \kappa$ such that $z(\alpha)=0$ for each $z\in \Z$. Therefore, if we pick any unit vector $e_\alpha\in \X_\alpha$, we have
\[ \|z+ e_\alpha\|^p= \|z\|^p+ 1,\; \mbox{ for all } z\in \Z.\]

Instead, if $\kappa$ is countable, we fix $\e>0$ (and notice that $\Z$ is finite dimensional in this case). Since $\lim_{t\to \infty}\frac{t-1}{(t^p+1)^{1/p}} =1$, there exists $M\in \R$ such that $t-1\geq (1-\e)(t^p+1)^{1/p}$ for every $t\geq M$. Thus, for all $z\in \Z$ with $\|z\|\geq M$ and all $x\in (\bigoplus_{k< \omega} \X_k)_{\ell_p}$ with $\|x\| =1$,
\begin{equation}\label{eq: p-octa for ||z|| large}
    \|z+ x\|\geq \|z\|-1\geq (1-\e)\big(\|z\|^p+ 1 \big)^{1/p}.
\end{equation}
Therefore, it is enough to find $x\in (\bigoplus_{k< \omega} \X_k)_{\ell_p}$ with $\|x\|=1$ that satisfies \eqref{eq: p-octahedral} for $\lambda=1$ and all $z\in \Z$ with $\|z\|\leq M$. For each $k\in \N$, take a unit vector $e_k\in \X_k$. For a fixed $z\in \Z$ and all sufficiently large $n\in \N$
\[ \|z+ e_n\|^p= \|z\|^p- \|z(n)\|^p+ \|z(n)+ e_n\|^p\geq (1-\e)^p (\|z\|^p+ 1). \]
However, the set $\{z\in \Z\colon \|z\|\leq M\}$ is compact because $\Z$ is finite dimensional. Thus, there is $n\in \N$ so that the above inequality (possibly, with $\e$ replaced by $2\e$) holds for all $z\in \Z$ with $\|z\|\leq M$. This shows that $(\bigoplus_{k< \omega} \X_k)_{\ell_p}$ is $\phi_p$-octahedral.
\end{proof}

Before we proceed, it is our duty to give at least one example of a normed space that is not $\phi$-octahedral for any (positive) modulus $\phi$, the simplest being the space $c_0$. The same argument works for $\C_0(\K)$, for every locally compact space $\K$ and actually shows that $\phi$-octahedrality is equivalent to octahedrality for $\C_0(\K)$ spaces.

\begin{lemma}\label{lem: octa iff K perfect} For a locally compact space $\K$ the following are equivalent:
\begin{enumerate}
    \item \label{i: K perfect} $\K$ is perfect;
    \item \label{i: C(K) Daug} $\C_0(\K)$ has the Daugavet property;
    \item \label{i: C(K) octa} $\C_0(\K)$ is octahedral;
    \item \label{i: C(K) phi-octa} $\C_0(\K)$ is $\phi$-octahedral, for some positive modulus $\phi$.
\end{enumerate}
\end{lemma}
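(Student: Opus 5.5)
The plan is to prove the chain of implications \eqref{i: K perfect}$\Rightarrow$\eqref{i: C(K) Daug}$\Rightarrow$\eqref{i: C(K) octa}$\Rightarrow$\eqref{i: C(K) phi-octa}$\Rightarrow$\eqref{i: K perfect}.

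The first two implications are classical facts, and we will simply quote them. For the first, the classical result says that $\C_0(\K)$ has the Daugavet property exactly when $\K$ has no isolated points. For the second, every Banach space with the Daugavet property is octahedral (see, e.g., \cite{octa_Daugavet}).

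The third implication is immediate from the first item of the Remark following \Cref{def: phi-octa}. An octahedral space is $\phi$-octahedral for every modulus with $\phi(t)\leq t$, for instance $\phi(t)=t$, and this $\phi$ is a positive modulus.

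The real content is \eqref{i: C(K) phi-octa}$\Rightarrow$\eqref{i: K perfect}, which we prove by contraposition. Suppose $k_0\in\K$ is an isolated point and $\phi$ is any positive modulus. Take $\Z\coloneqq\spn\{\bone_{\{k_0\}}\}$; this is one-dimensional, and $\bone_{\{k_0\}}\in\C_0(\K)$ because $\{k_0\}$ is clopen and compact. Fix any $x\in S_{\C_0(\K)}$ and put $a\coloneqq x(k_0)\in[-1,1]$. We choose $z\coloneqq \pm\bone_{\{k_0\}}\in S_\Z$ with the sign opposite to $a$ (any sign if $a=0$), and take $\lambda\coloneqq 1$. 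Off $k_0$ we have $|z+x|=|x|\leq 1$, while at $k_0$ we have $|z(k_0)+x(k_0)|=|\pm1+a|=1-|a|\leq1$. Hence $\|z+x\|\leq1$.

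On the other hand, $\phi$-octahedrality would require $\|z+x\|\geq(1-\e)(1+\phi(1))$. Since $\phi(1)>0$, we can choose $\e>0$ with $(1-\e)(1+\phi(1))>1$, and then no $x$ is $(\phi,\e)$-orthogonal to $\Z$. This contradiction shows $\K$ has no isolated points, i.e.\ $\K$ is perfect.

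I expect no serious obstacle. The only delicate points are checking that the cited Daugavet characterisation is available in the locally compact $\C_0$ setting, and noting the degenerate case $\K=\emptyset$, which is vacuous or excluded.
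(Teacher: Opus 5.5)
Your proposal is correct and follows the paper's proof essentially step for step. The paper uses the same cycle of implications, quotes the classical results for (i)$\Rightarrow$(ii)$\Rightarrow$(iii), treats (iii)$\Rightarrow$(iv) as obvious, and proves (iv)$\Rightarrow$(i) with the same argument. That argument takes an isolated point $k_0$ and $\Z=\spn\{\bone_{\{k_0\}}\}$, chooses the sign of $\pm\bone_{\{k_0\}}$ so that $\|\pm\bone_{\{k_0\}}+x\|\leq 1$, and contradicts $(1-\e)(1+\phi(1))>1$.
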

The equivalence between \eqref{i: K perfect}, \eqref{i: C(K) Daug}, and \eqref{i: C(K) octa} is well-known and we just repeat if for clarity.

\begin{proof} The implication \eqref{i: K perfect}$\implies$\eqref{i: C(K) Daug} is well-known, see, \emph{e.g.}, \cite{KMRW_Daug_book}*{Theorem 3.3.1}, while \eqref{i: C(K) Daug}$\implies$\eqref{i: C(K) octa} holds for all Banach spaces, and \eqref{i: C(K) octa}$\implies$\eqref{i: C(K) phi-octa} is obvious. Thus, we only need to prove that $\C_0(\K)$ is not $\phi$-octahedral, for any positive modulus $\phi$, provided that $\K$ has an isolated point (the proof of this implication is essentially the same as the proof that \eqref{i: C(K) Daug}$\implies$\eqref{i: K perfect}). Let $x_0\in \K$ be an isolated point and $\Z\coloneqq \spn\{\bone_{\{x_0\}} \}$. If $\C_0(\K)$ were $\phi$-octahedral, for each $\e>0$, there would be a unit vector $f\in \C_0(\K)$ such that $\|\eta \bone_{\{x_0\}}+ f\|\geq (1-\e)(1+ \phi(1))$, for $\eta= \pm1$. Since $|f(x_0)|\leq 1$, we can choose $\eta= \pm1$ so that $|\eta+ f(x_0)|\leq 1$. Thus, $\|\eta \bone_{\{x_0\}}+ f\|\leq 1$. This yields $(1-\e)(1+ \phi(1))\leq 1$, which, for sufficiently small $\e>0$, is a contradiction.
\end{proof}

We now move to results concerning stability of $(<\kappa)$-$\phi$-octahedrality under direct sums, the first pertaining to general moduli and the second being an improvement in the case of the moduli $\phi_p$.

\begin{theorem}\label{thm: phi-octa sum Y} Let $\X$ be a $(<\kappa)$-$\phi$-octahedral normed space and $p\in[1,\infty)$. Then $\X\oplus_p \Y$ is $(<\kappa)$-$(\phi_p\circ \phi)$-octahedral for every normed space $\Y$.
\end{theorem}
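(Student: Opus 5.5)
Given a subspace $\W$ of $\X\oplus_p\Y$ with $\dens(\W)<\kappa$ and $\e>0$, I will reduce to the first coordinate. Let $P_\X$ be the projection onto $\X$, and let $\Z\coloneqq\overline{P_\X(\W)}$. Then $\dens(\Z)\leq\dens(\W)<\kappa$. By $(<\kappa)$-$\phi$-octahedrality of $\X$, I will pick $x\in S_\X$ that is $(\phi,\e)$-orthogonal to $\Z$. By density and continuity, the inequality \eqref{eq: phi-orthogonal} extends from $S_{P_\X(\W)}$ to its closure. I will then test the candidate $(x,0)\in S_{\X\oplus_p\Y}$.

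Fix $w=(u,v)\in S_\W$ and $\lambda\in\R$, so that $\|u\|^p+\|v\|^p=1$. By definition of the $p$-sum,
\[ \|w+\lambda(x,0)\|^p=\|u+\lambda x\|^p+\|v\|^p. \]
If $u\neq0$, write $u=s z$ with $s=\|u\|\in(0,1]$ and $z\in S_\Z$. Homogeneity gives
\[ \|u+\lambda x\|=s\|z+(\lambda/s)x\|\geq(1-\e)\,s\bigl(1+\phi(|\lambda|/s)\bigr)\geq(1-\e)\bigl(s+\phi(|\lambda|)\bigr), \]
where the last step uses \eqref{phi_i: modulus iff}, or equivalently \eqref{phi_i: phi/t}, in the form $s\,\phi(|\lambda|/s)\geq\phi(|\lambda|)$ for $s\leq1$. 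If $u=0$, then $\|u+\lambda x\|=|\lambda|\geq\phi(|\lambda|)$. Here I use that $\phi(t)\leq t$ whenever $\X$ is $\phi$-octahedral, as noted in the remark following \Cref{def: phi-octa}. So the same bound holds with $s=0$.

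Setting $t\coloneqq\phi(|\lambda|)$ and $s^p+r^p=1$ with $r=\|v\|$, it remains to prove
\[ (s+t)^p+r^p\geq 1+t^p=\bigl(1+\phi_p(t)\bigr)^p. \]
This gives $\|w+\lambda(x,0)\|\geq(1-\e)\bigl(1+\phi_p(\phi(|\lambda|))\bigr)$, which is the required conclusion. Since $\phi_p\circ\phi$ is a positive modulus by \Cref{lemma: basic modulus}, this finishes the proof.

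The elementary inequality is the step that needs checking. By convexity of $\tau\mapsto\tau^p$ (superadditivity for $p\geq1$), $(s+t)^p\geq s^p+t^p$. Therefore $(s+t)^p+r^p\geq s^p+r^p+t^p=1+t^p$, as needed.
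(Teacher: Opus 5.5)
Your proof is correct and follows the paper's argument essentially step by step. You project onto $\X$, take $x$ that is $(\phi,\e)$-orthogonal to the projected subspace, use \eqref{phi_i: modulus iff} to get $\|u+\lambda x\|\geq(1-\e)(\|u\|+\phi(|\lambda|))$, and finish with $(a+b)^p\geq a^p+b^p$. Your explicit treatment of the case $u=0$ via $\phi(t)\leq t$ covers a point that the paper's proof passes over silently when it divides by $\|z_1\|$.
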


\begin{proof} Let us denote by $P\colon \X\oplus_p \Y\to \X$ the canonical projection onto the first component. Fix $\e>0$ and let $\Z$ be an arbitrary subspace of $\X\oplus_p \Y$ such that $\dens(\Z)< \kappa$. Then, $P(\Z)$ is a subspace of $\X$ and $\dens(P(\Z))< \kappa$ as well. Therefore, the definition of $(<\kappa)$-$\phi$-octahedrality of $\X$ provides us with a vector $x\in S_\X$ with the property that $x$ is $(\phi,\e)$-orthogonal to $P(\Z)$ . We will show that $(x,0)$ is $(\phi_p\circ \phi,\e)$-orthogonal to $\Z$, which concludes the proof. Thus, let us fix $z=(z_1,z_2)\in S_\Z$ and $\lambda\in\R$. To begin with, since $\|z_1\|\leq 1$ and $\phi$ is a modulus, we obtain
\begin{eqnarray*}
   \|z_1+ \lambda x\|&=& \|z_1\| \left\|\frac{z_1}{\|z_1\|}+ \frac{\lambda}{\|z_1\|} x \right\| \\
    &\geq& (1-\e)\left(\|z_1\|+ \|z_1\|\phi\left( \frac{|\lambda|}{\|z_1\|} \right) \right)\geq (1-\e) \big(\|z_1\|+ \phi(|\lambda|)\big),
\end{eqnarray*}
where the last inequality follows from \eqref{phi_i: modulus iff}. Combining this inequality with the standard fact that $(a+b)^p\geq a^p+ b^p$ for $a,b\geq 0$, we get
\begin{eqnarray*}
 	\|(z_1+ \lambda x,z_2)\|&=& \big(\|z_1+ \lambda x\|^p+ \|z_2\|^p \big)^{1/p}\\
	&\geq& (1-\e)\Big(\big(\|z_1\|+ \phi(|\lambda|)\big)^p+ \|z_2\|^p \Big)^{1/p}\\
    &\geq& (1-\e)\big(\|z_1\|^p+ \phi(|\lambda|)^p+ \|z_2\|^p \big)^{1/p}\\
    &=& (1-\e)\big(1+ \phi(|\lambda|)^p\big)^{1/p}\\
    &=& (1-\e)\big(1+ \phi_p\circ\phi (|\lambda|) \big).
\end{eqnarray*}
This means that $(x,0)$ is $(\phi_p\circ \phi,\e)$-orthogonal to $\Z$, as desired.
\end{proof}

In the particular case when the modulus has the form $\phi_p$, we can obtain the following stronger result, which illustrates the importance of the moduli $\phi_p$. Further, it generalises the fact that $\X\oplus_1 \Y$ is $(<\kappa)$-octahedral when $\X$ is $(<\kappa)$-octahedral.

\begin{proposition}\label{prop: p-octa sum Y} Let $\X$ be a $(<\kappa)$-$\phi_p$-octahedral normed space and $1\leq r \leq p$. Then $\X\oplus_r \Y$ is $(<\kappa)$-$\phi_p$-octahedral for every normed space $\Y$.
\end{proposition}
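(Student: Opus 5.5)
We follow the proof of \Cref{thm: phi-octa sum Y}, using the homogeneous reformulation \eqref{eq: p-octahedral} from \Cref{ex: phi_p}. Fix $\e>0$ and a subspace $\Z$ of $\X\oplus_r\Y$ with $\dens(\Z)<\kappa$. Let $P$ be the projection onto $\X$. Since $\dens(P(\Z))<\kappa$, the $(<\kappa)$-$\phi_p$-octahedrality of $\X$ gives $x\in S_\X$ with
\[ \|z_1+\lambda x\|\geq (1-\e)\big(\|z_1\|^p+|\lambda|^p\big)^{1/p}\qquad\mbox{for all } z_1\in P(\Z),\ \lambda\in\R. \]
The candidate vector for $\Z$ is $(x,0)$. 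By \Cref{ex: phi_p} it suffices to show, for $z=(z_1,z_2)\in \Z$ and $\lambda\in\R$,
\[ \|(z_1+\lambda x,z_2)\|\geq (1-\e)\big(\|z\|^p+|\lambda|^p\big)^{1/p}. \]

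Write $a=\|z_1\|$, $b=\|z_2\|$ and $c=|\lambda|$. Then
\[ \|(z_1+\lambda x,z_2)\|=\big(\|z_1+\lambda x\|^r+b^r\big)^{1/r}\geq (1-\e)\Big(\big(a^p+c^p\big)^{r/p}+b^r\Big)^{1/r}. \]
Since $\|z\|=(a^r+b^r)^{1/r}$, the proof reduces to the elementary inequality
\[ \Big(\big(a^p+c^p\big)^{r/p}+b^r\Big)^{1/r}\geq \Big(\big(a^r+b^r\big)^{p/r}+c^p\Big)^{1/p}\qquad (a,b,c\geq 0,\ 1\leq r\leq p). \]

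This inequality is the main step. We read both sides as mixed norms of the vector with entries $a,b,c$. The left side is the $\ell_r$-norm of the pair $\big(\|(a,c)\|_p,\,b\big)$. The right side is the $\ell_p$-norm of the pair $\big(\|(a,b)\|_r,\,c\big)$. We prove the inequality through Minkowski's inequality in $L_{p/r}$, where $p/r\geq 1$. Put $s=p/r$ and raise both sides to the power $r$. The right side becomes $\big((a^r+b^r)^s+c^{rs}\big)^{1/s}$, which is the $\ell_s$-norm of the vector $(a^r,c^r)+(b^r,0)$. By the triangle inequality in $\ell_s$ this is at most $\|(a^r,c^r)\|_s+\|(b^r,0)\|_s=(a^p+c^p)^{r/p}+b^r$, which is the left side raised to the power $r$.

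Combining the displays gives the desired estimate, so $(x,0)$ is $(\phi_p,\e)$-orthogonal to $\Z$. This proves that $\X\oplus_r\Y$ is $(<\kappa)$-$\phi_p$-octahedral. The hypothesis $r\leq p$ is needed exactly to make $s\geq 1$, so that Minkowski's inequality applies.
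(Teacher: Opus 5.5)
Your proof is correct and follows the paper's argument: you take the same witness $(x,0)$, obtained from the $(<\kappa)$-$\phi_p$-octahedrality of $\X$ applied to $P(\Z)$, and you make the same reduction to the scalar inequality \eqref{eq: integral Mink}. Keeping $c=|\lambda|$ rather than normalising $\lambda=1$ is harmless by homogeneity. The only difference is how that inequality is proved: the paper shows $f'(t)\geq 1$ for $f(t)=\big((\alpha^p+t)^{r/p}+\beta^r\big)^{p/r}-(\alpha^r+\beta^r)^{p/r}$, while you apply the triangle inequality in $\ell_{p/r}$ to $(a^r,c^r)+(b^r,0)$. Yours is the shorter, more conceptual Minkowski-type route that the paper mentions but chooses not to write out.
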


Notice that the proposition also implies that $\X\oplus_r \Y$ is $(<\kappa)$-$\phi_r$-octahedral, when $r>p$. In fact, if $r>p$, $\X$ is also $(<\kappa)$-$\phi_r$-octahedral, whence $\X\oplus_r \Y$ is $(<\kappa)$-$\phi_r$-octahedral.

\begin{proof} The desired vector is chosen in the same way as in the previous proof, while the computation showing the $(\phi_p,\e)$-orthogonality differs. Fix $\e>0$ and let $\Z$ be an arbitrary subspace of $\X\oplus_r \Y$ such that $\dens(\Z)< \kappa$. Letting $P\colon \X\oplus_r \Y\to \X$ be the canonical projection, $P(\Z)$ is a subspace of $\X$ and $\dens(P(\Z))< \kappa$. Thus, the $(<\kappa)$-$\phi_p$-octahedrality of $\X$ (\Cref{ex: phi_p}) provides us with a vector $x\in S_\X$ with the property that
\[ \|z_1+ x\|\geq (1-\e)\big(\|z_1\|^p+ 1\big)^{1/p} \qquad \mbox{for all }\, z_1\in P(\Z). \]

We claim that the vector $(x,0)\in \X\oplus_r \Y$ witnesses that $\X\oplus_r \Y$ is $(<\kappa)$-$\phi_p$-octahedral. Therefore, we fix any element $z=(z_1,z_2)\in \Z$. The inequality we aim to prove is
\begin{eqnarray*}
    \|(z_1+ x,z_2)\|&\geq& (1-\e)\Big(\|(z_1,z_2)\|^p+ 1\Big)^{1/p}\\
    &=& (1-\e)\Big(\big(\|z_1\|^r+ \|z_2\|^r\big)^{p/r}+ 1 \Big)^{1/p}.
\end{eqnarray*}
The left-hand side of the inequality can be estimated as follows:
\begin{eqnarray*}
    \|(z_1+ x,z_2)\|&=& \big(\|z_1+ x\|^r+ \|z_2\|^r \big)^{1/r}\\
    &\geq& \Big((1-\e)^r\big(\|z_1\|^p+ 1\big)^{r/p}+ \|z_2\|^r \Big)^{1/r}\\
    &\geq& (1-\e)\Big(\big(\|z_1\|^p+ 1\big)^{r/p}+ \|z_2\|^r \Big)^{1/r}.
\end{eqnarray*}

Setting $\alpha= \|z_1\|$ and $\beta= \|z_2\|$, we thus see that it suffices to prove the inequality
\begin{equation}\label{eq: integral Mink}
    \Big(\big(\alpha^p+ 1\big)^{r/p}+ \beta^r \Big)^{1/r}\geq \Big(\big(\alpha^r+ \beta^r\big)^{p/r}+ 1 \Big)^{1/p} \qquad(\alpha,\beta\geq0).
\end{equation}
This inequality can be proved via Minkowski integral inequality; however, we give a direct elementary proof. Consider the real function
\[ f(t)\coloneqq \Big(\big(\alpha^p+ t\big)^{r/p}+ \beta^r \Big)^{p/r}- \big(\alpha^r+ \beta^r\big)^{p/r} \qquad (t\geq 0). \]
Then, $f(0)=0$, and \eqref{eq: integral Mink} is equivalent to $f(1)\geq 1$. Hence, it suffices to prove that $f'(t)\geq 1$ for all $t\geq 0$, which can be checked directly. In fact,
\begin{eqnarray*}
    f'(t)&=& \frac{p}{r} \Big(\big(\alpha^p+ t\big)^{r/p}+ \beta^r \Big)^{\frac{p-r}{r}}\cdot \frac{r}{p} \big(\alpha^p+ t\big)^{\frac{r-p}{p}}\geq 1\\
    &\overset{p-r\geq0}{\iff}& \Big(\big(\alpha^p+ t\big)^{r/p}+ \beta^r \Big)^{1/r} \geq \big(\alpha^p+ t\big)^{1/p}.
\end{eqnarray*}
This last inequality is clearly satisfied, because $\beta\geq 0$.    
\end{proof}

Finally, we move to the main result of the section, where we give a lower bound for $\gamma(\X)$ when $\X$ is $(<\kappa)$-$\phi$-octahedral.

\begin{theorem}\label{thm: gamma of phi-octa} Let $\kappa$ be an infinite cardinal, $\phi$ a positive modulus, and $\X$ be a $(<\kappa)$-$\phi$-octahedral normed space. Then, for every normed space $\Y$ such that $\dens(\Y)< \cf(\kappa)$ and every $p\in [1,\infty]$ we have 
\[ \gamma(\X\oplus_p \Y)\geq \frac{2}{K(\X;\kappa)}. \]
\end{theorem}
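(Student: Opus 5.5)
Following \eqref{eq: 2/gamma(X)}, I would fix $r>0$ and a set $\P\subseteq \W\coloneqq \X\oplus_p \Y$ that is $1$-dense and $r$-separated, and aim to show $r\leq (1+\eta)K(\W;\kappa)$ for every $\eta>0$. By \Cref{fact: K with a small summand}, which uses $\dens(\Y)<\cf(\kappa)$, we have $K(\W;\kappa)=K(\X;\kappa)$, so this gives the theorem.

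It is therefore enough to find a ball of radius $1+\eta$ containing $\kappa$ many points of $\P$. I would argue by contradiction and assume that every ball $B(w,1+\eta)$ meets $\P$ in fewer than $\kappa$ points. Using this, I would build a subspace $\Z\subseteq\W$ with $\dens(\Z)<\kappa$ that is \emph{$\P$-closed at scale $1+\eta$}: every $q\in \P$ with $\dist(q,\Z)\leq 1+\eta$ already lies in $\Z$.

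The construction is a closing-off argument in countably many steps. Start from $\Z_0\coloneqq\spn\{p_0\}$ with $p_0\in\P$. Given $\Z_n$, add to it all points of $\P$ within $1+\eta$ of a dense subset of $\Z_n$. Then let $\Z$ be the closure of $\bigcup_n \Z_n$. I expect a cardinality issue at this step. Only a bounded region of $\Z$ is really needed (the points near a fixed base point), and for singular $\kappa$ one has to use $\cf(\kappa)$ together with the fact that each ball contributes fewer than $\kappa$ points. If this causes trouble, I would localise and only close off a bounded piece around the chosen centre.

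Next I apply $(<\kappa)$-$\phi$-octahedrality. Let $P\colon\W\to\X$ be the projection; then $\dens(P(\Z))<\kappa$, so there is $x\in S_\X$ that is $(\phi,\e)$-orthogonal to $P(\Z)$. Exactly as in the proof of \Cref{thm: phi-octa sum Y}, the vector $(x,0)$ is then $(\phi_p\circ\phi,\e)$-orthogonal to $\Z$. By homogeneity and \eqref{phi_i: modulus iff} this gives
\[ \|v+\lambda x\|\geq(1-\e)\,\|v\|\,\bigl(1+\psi(|\lambda|/\|v\|)\bigr) \qquad\text{for all } v\in\Z,\ \lambda\in\R, \]
where $\psi=\phi_p\circ\phi$ is a positive modulus.

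Then I pick a centre $z_0\in\Z$ and a small $\lambda>0$, and set $w\coloneqq z_0+\lambda(x,0)$. By $1$-density there is $q\in\P$ with $\|q-w\|\leq 1$. Then $\dist(q,\Z)\leq 1+\lambda$, so with $\lambda\leq\eta$ the closedness of $\Z$ gives $q\in\Z$. Applying the displayed inequality to $v=q-z_0$, it suffices to have $\|q-z_0\|\geq 1-\eta'$ for every $q\in\P\cap\Z$ near $z_0$. Indeed, with $\e,\eta'$ small compared to $\psi(\lambda/2)$, we then get $\|q-w\|>1$, which is a contradiction.

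The main obstacle is the choice of the centre $z_0$: a point of $\Z$ lying at distance at least $1-\eta'$ from $\P\cap\Z$. My first attempt would be a ``deep hole'' argument inside $\Z$, in one of two forms:
\begin{itemize}
    \item take $z_0$ on a segment joining two points of $\P\cap\Z$, which are $r$-separated with $r>1$;
    \item iterate the octahedral trick inside $\Z$ itself, by first moving off $\P$ in an orthogonal direction chosen at an earlier stage of the closing-off construction, so that the distances to earlier points are already close to $1$.
\end{itemize}
Combining this with a suitable ordering of the construction of $\Z$ is where I expect the real work to lie.
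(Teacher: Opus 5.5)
Your reduction is sound: it is enough to find a ball of radius close to the covering radius that contains $\kappa$ points of $\P$, and then apply \Cref{fact: K with a small summand}. The gap is the step you single out as the main obstacle, and as set up it cannot be carried out. From ``$\P$ is $1$-dense'' alone there need not be any $z_0$ with $\|q-z_0\|\geq 1-\eta'$ for the points $q\in\P\cap\Z$ near $z_0$. Suppose $\P$ happens to be $\rho$-dense for some $\rho<1-\eta'$. Then every $z_0\in\Z$ has some $q\in\P$ with $\|q-z_0\|\leq\rho$, and your closure property forces $q\in\Z$. Your first remedy does not help: the midpoint of two points of $\P\cap\Z$ is controlled only against those two points, and only at distance $r/2$, which is not close to $1$ unless $r$ is close to $2$. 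The second remedy is not carried out. The missing idea is an \emph{extremal} choice of the hole. Let $R\coloneqq\sup_{w\in\W}\dist(w,\P)$ be the exact covering radius; rescaling so that $R=1$ only increases the separation. Then $\P$ is $(R+\e)$-dense, and by the very definition of $R$ there is $x_0$ with $\|d-x_0\|>R-\e$ for all $d\in\P$. This is the deep hole, and it comes for free.

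With this choice, the closing-off construction is unnecessary, and so are its cardinality problems. Those problems are genuine for singular $\kappa$: fewer than $\kappa$ sets of size $<\kappa$ can have a union of size $\kappa$, and if $\cf(\kappa)=\omega$ then $\bigcup_n\Z_n$ can have density $\kappa$. Translate $x_0$ to $0$ and set $\mathcal{Q}\coloneqq\{d\in\P\colon \|d\|\leq R+\e+\delta\}$. Suppose $|\mathcal{Q}|<\kappa$, and take $x$ that is $(\phi,\e)$-orthogonal to $\overline{\spn}(\mathcal{Q})$, with $\e$ small compared to $\phi(\delta/3)$. For $d\in\mathcal{Q}$, orthogonality together with $\|d\|>R-\e$ gives $\|d-\delta x\|>R+\e$. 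For $d\notin\mathcal{Q}$ the same bound follows from the triangle inequality. This contradicts $(R+\e)$-density. So $\mathcal{Q}$ is a $2$-separated set of size $\kappa$ in a ball of radius $R+\e+\delta$, which is the paper's argument for $\Y=\{0\}$. The general case follows by applying it to $\X\oplus_p\Y$ via \Cref{thm: phi-octa sum Y} and \Cref{fact: K with a small summand}.

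Your second bullet can also be turned into a proof, but only by iterating \emph{localised} pushes from an arbitrary point. Each push replaces the depth $s$ by at least $\min\{cs,1\}$, where $c\coloneqq(1-\e)\bigl(1+\psi(\lambda/(1+\lambda))\bigr)>1$. Once $cs>1$, the pushed point is at distance $>1$ from all of $\P$, a contradiction. Again, only the ball around the current point is needed, not a closed-off $\Z$.

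Finally, your transfer of orthogonality to $(x,0)$ fails for $p=\infty$. If $(0,z_2)\in\Z$ with $\|z_2\|=1$, then $\|(\lambda x,z_2)\|_\infty=\max\{|\lambda|,1\}=1$ for $|\lambda|\leq 1$. So no positive modulus works, and that case must be deduced from $p<\infty$ by continuity of $\gamma$ in the Banach--Mazur distance (\Cref{fact: gamma and d_BM}).
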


\begin{proof} We split the argument in two steps, by first proving the particular case where $\Y=\{0\}$ and then bootstrapping the general case out of the particular one with the aid of \Cref{thm: phi-octa sum Y}. The argument we give in the first step is based upon a modification of the proof of \cite{Swanepoel}*{Proposition 2}.
\smallskip

\textbf{Step 1:} $\Y=\{0\}$.\\
Let $\X$ be a $(<\kappa)$-$\phi$-octahedral normed space and let  $\D$ be an arbitrary subset of $\X$ which is $2$-separated and $\rho$-dense. We let $R$ be the infimum of all these $\rho$ such that $\D$ is $\rho$-dense. Hence, by definition, for every $\e>0$, $\D$ is $(R+\e)$-dense and it is not $(R-\e)$-dense. We shall show that $R\geq \frac{2}{K(\X;\kappa)}$, which proves the desired inequality. Clearly, without any loss of generality, we can suppose that $R\leq 2$.

As $\phi$ is a positive modulus, for every $\delta>0$ we can pick $\e\in(0,1)$ such that 
\begin{equation}\label{eq: phi(delta/3)}
    1+ \phi\left(\frac\delta3\right)> \frac{R+\e}{(1-\e)(R-\e)}.
\end{equation}
Notice that $\e\to 0$ as $\delta\to 0$. Thus, if $\delta>0$ is small enough, we may suppose that $R+ \e+ \delta \leq3$. By definition, the set $\D$ is not $(R-\e)$-dense, whence there exists $x_0\in \X$ with the property that $\|d-x_0\|> R-\e$ for all $d\in \D$. Without loss of generality, we assume that $x_0=0$. As a consequence, $\|d\|> R-\e$ for all $d\in \D$. We now consider the set $\mathcal{Q}=\{d\in \D\colon \|d\|\leq R+\e+ \delta\}$.

\begin{claim}\label{claim: Q large} $|\mathcal{Q}|\geq \kappa$.
\end{claim}
\begin{proof}[Proof of \Cref{claim: Q large}] \renewcommand\qedsymbol{$\square$}
Suppose, towards a contradiction, that $|\mathcal{Q}|< \kappa$ and let $\Z\coloneqq \overline{\spn}(\mathcal{Q})$; then, $\dens(\Z)< \kappa$, therefore by definition of $(<\kappa)$-$\phi$-octahedrality there exists a vector $x\in S_\X$ such that, for every $d\in \mathcal{Q}$ and $\lambda \in \R$,
\[ \left\|\frac{d}{\|d\|}+\lambda x\right\|\geq (1-\e) \big(1+ \phi(|\lambda|)\big). \]
Hence, for $d\in \mathcal{Q}$, using that $R-\e< \|d\|\leq 3$ and that $\phi$ is non-decreasing, we get
\begin{eqnarray*}
	\|d- \delta x\| &=& \|d\|\left\|\frac{d}{\|d\|}-\frac{\delta}{\|d\|} x\right\|\\
	&>& (R-\e)(1-\e) \left(1+ \phi\left(\frac{\delta}{\|d\|}\right)\right)\\
	&\geq& (R-\e)(1-\e) \left(1+ \phi\left(\frac{\delta}{3}\right)\right) \overset{\eqref{eq: phi(delta/3)}}{>} R+\e.
\end{eqnarray*}

On the other hand, there exists $d_0\in \D$ such that $\|d_0-\delta x\|\leq R+\e$, because $\D$ is $(R+\e)$-dense. By the triangle inequality, $\|d_0\|\leq R+\e +\delta$, whence $d_0\in \mathcal{Q}$. This contradicts the fact that  $\|d- \delta x\|>R+\e$ whenever $d\in \mathcal{Q}$, thereby proving our claim.
\end{proof}

From the claim, we infer that the set $\mathcal{Q}$ is a $2$-separated subset of $(R+ \e+ \delta)B_\X$, having cardinality at least $\kappa$. By a rescaling and the definition of $K(\X;\kappa)$, we then obtain that
\[ 2\leq (R+\e +\delta) K(\X;\kappa). \]
Letting $\delta\to 0$ (hence, $\e\to 0$ too), we reach the inequality $R\geq \frac{2}{K(\X;\kappa)}$, which completes the proof of this step.
\smallskip

\textbf{Step 2:} The general case $\dens(\Y)< \cf(\kappa)$.\\
We first consider the case when $p<\infty$. \Cref{thm: phi-octa sum Y} yields that $\X\oplus_p \Y$ is $(<\kappa)$-$(\phi_p\circ \phi)$-octahedral. Therefore, application of the first step to $\X\oplus_p \Y$ leads us to
\[ \gamma(\X\oplus_p \Y)\geq \frac{2}{K(\X\oplus_p \Y;\kappa)}. \]
However, by \Cref{fact: K with a small summand}, $K(\X\oplus_p \Y;\kappa)= K(\X;\kappa)$, which concludes the proof in the case that $p<\infty$. Finally the case when $p=\infty$ just follows from the previous case by letting $p\to \infty$ and using that $\gamma(\X\oplus_p \Y)$ is a continuous function of $p$, by \Cref{fact: gamma and d_BM}.
\end{proof}

\subsection{Banach spaces with \texorpdfstring{$\gamma(\X) =2$}{gamma(X)=2}}\label{sec: gamma=2}
In this part we apply the results from this section to give examples of Banach spaces $\X$ such that $\gamma(\X)=2$. These spaces have the surprising property that any maximal packing already has the optimal covering property. Even though the result is now an almost direct application of the above theorems, we consider this as one of the most striking contributions of the paper.

\begin{theorem}\label{thm: gamma=2} Let $p\in [1,\infty)$ and let $(p_k)_{k=1}^\infty$ be any sequence in $[1, \infty)$ that diverges to $\infty$. Then, the Banach space
\[ \X_p= \left(\bigoplus_{k=1}^\infty \ell_{p_k}(\omega_k) \right)_{\ell_p} \]
satisfies $\gamma(\X_p)=2$.

In particular, $\X_1$ is an octahedral Banach space with $\gamma(\X_1)=2$; further, if $p>1$ and each $p_k\in (1,\infty)$, $\X_p$ is a reflexive Banach space with $\gamma(\X_p)=2$.
\end{theorem}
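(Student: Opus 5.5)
The plan is to apply \Cref{thm: gamma of phi-octa} for each $k$, splitting $\X_p$ into one large summand plus a small complement. Since $\gamma\leq 2$ always holds, it suffices to show that $\gamma(\X_p)\geq 2^{1-1/p_k}$ for every $k$ and then let $k\to\infty$.

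Fix $k\geq 1$. Write $\X_p\equiv \ell_{p_k}(\omega_k)\oplus_p \Y_k\oplus_p \W_k$, where $\Y_k=(\bigoplus_{j<k}\ell_{p_j}(\omega_j))_{\ell_p}$ and $\W_k=(\bigoplus_{j>k}\ell_{p_j}(\omega_j))_{\ell_p}$. The difficulty is that $\W_k$ has density $\omega_\omega>\omega_k$, so the small-summand hypothesis $\dens(\Y)<\cf(\kappa)$ cannot hold with $\kappa=\omega_k$ if $\W_k$ is treated as small. I would therefore regroup and take $\kappa=\omega_{k}$ with large summand $\X\coloneqq \ell_{p_k}(\omega_k)\oplus_p\W_k$ and small summand $\Y\coloneqq\Y_k$. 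Then $\dens(\Y_k)=\omega_{k-1}<\omega_k=\cf(\omega_k)$; for $k=1$, $\Y_1=\{0\}$.

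\textbf{Step 1: $\X$ is $(<\omega_k)$-$\phi$-octahedral for a positive modulus $\phi$.} This holds for $\phi=\phi_{\max(p,p_k)}$. Indeed, every factor $\ell_{p_j}(\omega_j)$ with $j\geq k$ is $(<\omega_k)$-$\phi_{p_j}$-octahedral, since a subspace of density $<\omega_k$ misses some coordinate. However, $p_j$ varies with $j$, so it is simpler to note that $\ell_{p_k}(\omega_k)$ is $(<\omega_k)$-$\phi_{p_k}$-octahedral by \Cref{lemma: ell_p octa}, and then \Cref{thm: phi-octa sum Y} gives that $\ell_{p_k}(\omega_k)\oplus_p\W_k$ is $(<\omega_k)$-$(\phi_p\circ\phi_{p_k})$-octahedral. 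This composite is a positive modulus by \Cref{lemma: basic modulus}. Then \Cref{thm: gamma of phi-octa} yields $\gamma(\X_p)\geq 2/K(\X;\omega_k)$.

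\textbf{Step 2: the main obstacle, $K(\ell_{p_k}(\omega_k)\oplus_p\W_k;\omega_k)\leq 2^{1/p_k}$.} \Cref{fact: K with a small summand} does not apply, because $\W_k$ is large. The bound must instead come from the fact that every factor in $\W_k$ has exponent $p_j$ (possibly smaller than $p_k$, since the sequence only diverges), and that $\ell_{p_j}(\omega_j)$ has large Kottman constant. To repair this, I would first pass to a subsequence so that $p_k$ is increasing. The Kottman constant of any $\ell_p$-type sum is then at least $2^{1/p}$, which ruins the bound whenever $p<p_k$. So the choice of large summand is wrong.

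Revised plan: take the large part to be just $\ell_{p_k}(\omega_k)$ and use the $\kappa$-Kottman constant with $\kappa=\omega_k$ more carefully, by re-running Step 1 of the proof of \Cref{thm: gamma of phi-octa} directly in $\X_p$. Claim \ref{claim: Q large} only needs $(<\omega_k)$-$\psi$-octahedrality of $\X_p$ itself, which holds because $\ell_{p_k}(\omega_k)\oplus_p(\text{rest})$ is $(<\omega_k)$-$\phi_p\circ\phi_{p_k}$-octahedral by \Cref{thm: phi-octa sum Y}. This produces $\omega_k$ many $2$-separated points in $(R+\e+\delta)B_{\X_p}$. Let $\mathcal Q$ be this set. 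By a pigeonhole argument as in \Cref{fact: K with a small summand}, I can first thin $\mathcal Q$, keeping cardinality $\omega_k$, so that the $\Y_k$-coordinates are $\e$-close. The delicate point is the $\W_k$-coordinates, which I expect to be the real obstacle. For these I would try a $\Delta$-system/support argument: each vector in $\W_k$ is countably supported, so among $\omega_k$ vectors I can thin to a family whose $\W_k$-parts nearly coincide on a common countable root and are otherwise supported disjointly. The disjoint tails then combine in $\ell_{p_j}$ norms with $p_j\geq p_k$ after the subsequence choice, giving separation at most $2^{1/p_k}$ up to $\e$. This yields $2\leq (R+\e+\delta)2^{1/p_k}+o(1)$, hence $\gamma(\X_p)\geq 2^{1-1/p_k}\to 2$.

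For the final clauses: $\X_1$ is octahedral because $\ell_1$-sums of nonzero spaces are octahedral, and $\X_p$ is reflexive when $p,p_k\in(1,\infty)$, since an $\ell_p$-sum of reflexive spaces is reflexive.
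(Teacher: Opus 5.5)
Your first plan is the paper's proof: the large summand $\ell_{p_k}(\omega_k)\oplus_p\W_k$ is $(<\omega_k)$-$(\phi_p\circ\phi_{p_k})$-octahedral by \Cref{lemma: ell_p octa} and \Cref{thm: phi-octa sum Y}, $\dens(\Y_k)<\cf(\omega_k)$, and \Cref{thm: gamma of phi-octa} applies. You abandoned it for a wrong reason, and the step you then leave open is exactly the one that needs proof.

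The objection that an $\ell_p$-sum has Kottman constant at least $2^{1/p}$ concerns $K(\cdot)$, i.e.\ countable separated sets such as unit vectors in distinct summands. What enters here is $K(\cdot;\omega_k)$, and the outer sum has only countably many summands. The missing idea is a finite-block reduction. Take an $r$-separated family of cardinality $\omega_k$ in the unit ball of $\ell_{p_k}(\omega_k)\oplus_p\W_k$, and perturb each vector so that it has finite support in the summand index $j$. Since $\N$ has only countably many finite subsets and $\omega_k$ is uncountable, infinitely many perturbed vectors lie in one finite block $(\bigoplus_{j=k}^N\ell_{p_j}(\omega_j))_{\ell_p}$. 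The Kottman constant of that block is $\max_{k\leq j\leq N}2^{1/p_j}$.

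Your target $2^{1/p_k}$ is also false in general. If $p_j<p_k$ for some $j>k$, the unit vectors of $\ell_{p_j}(\omega_j)\subseteq\W_k$ give $K(\ell_{p_k}(\omega_k)\oplus_p\W_k;\omega_k)\geq 2^{1/p_j}$. Passing to a subsequence of $(p_k)$ is not allowed, since it changes the space. The correct bound is $2^{1/m_k}$ with $m_k\coloneqq\inf_{j\geq k}p_j$, which still tends to $\infty$. This is why the paper fixes $M$ and chooses $n$ with $p_j\geq M$ for \emph{all} $j\geq n$.

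The revised plan does not avoid the issue. Once the $\Y_k$-coordinates are made $\e$-close, you must again bound $K(\ell_{p_k}(\omega_k)\oplus_p\W_k;\omega_k)$, and the $\Delta$-system sketch does not do this. First, the $\Delta$-system lemma fails for $\omega_1$ countable sets: the initial segments of $\omega_1$ contain no three-element $\Delta$-system. So you would need finitely supported approximations first.

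More importantly, your final estimate ignores that disjointly supported tails in \emph{different} summands combine through the outer $\ell_p$-norm and can be $2^{1/p}$ apart. That is the very obstruction you raised against the first plan. To conclude, you would need a further thinning that makes the norm profiles $(\|x(j)\|)_{j\geq k}$ of the tails nearly equal. This is possible, since the profiles lie in the separable space $\ell_p$ and $\cf(\omega_k)>\omega$. Disjointness within each summand then gives separation at most $2^{1/m_k}$ up to $\e$. That would be a valid but heavier variant of the finite-block argument; as written, Step 2 is unproved.
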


Incidentally, if $p=1$, or $p_k=1$ for some $k\in \N$, we have $K(\X_p)=2$, hence $\X_p$ provides yet another negative answer to \Cref{probl: gamma=2/k}. Compared to the previous examples, this is the only one where the product $\gamma(\X)\cdot K(\X)$ attains it maximum possible value, namely $4$.

\begin{proof} It is clear that $\X_p$ is reflexive when $p>1$ and $p_k \in(1,\infty)$. Likewise, $\X_1$, being an infinite $\ell_1$-sum, is octahedral. Therefore, we only have to prove that $\gamma(\X_p)= 2$.

Fix any real number $M$ and take $n\in \N$ such that $p_k\geq M$ for each $k\geq n$. We now consider the following Banach spaces:
\[ \Y\coloneqq \left(\bigoplus_{k=1}^{n-1} \ell_{p_k}(\omega_k) \right)_{\ell_p} \qquad\mbox{and}\qquad \Z\coloneqq \left(\bigoplus_{k=n+1}^\infty \ell_{p_k}(\omega_k) \right)_{\ell_p}, \]
which we canonically consider as subspaces of $\X_p$. Plainly, $\X_p= \Y\oplus_p \ell_{p_n}(\omega_n)\oplus_p \Z$.

We first prove the following estimate for the $\omega_n$-Kottman constant of $\ell_{p_n}(\omega_n)\oplus_p \Z$:
\begin{equation}\label{eq: K of tail part}
    K(\ell_{p_n}(\omega_n)\oplus_p \Z;\omega_n)\leq 2^{1/M}.
\end{equation}

Towards a contradiction, suppose that there exists a subset $\P$ of $B_{\ell_{p_n}(\omega_n)\oplus_p \Z}$ of cardinality $\omega_n$ that is $(2^{1/M}+ \e)$-separated. For $x\in \P$, let us write $x=(x(k))_{k=1}^\infty$, where $x(k)\in \ell_{p_k}(\omega_k)$ and let us write $\supp(x)\coloneqq \{k\in \N\colon x(k)\neq 0\}$. Up to replacing $\e$ with $\e/2$ and a small perturbation, we can assume that $\supp(x)$ is a finite set for each $x\in \P$. Since $\P$ has cardinality $\omega_n$ and $\N$ only has countably many finite subsets, there exists a countable subset $\P_0$ of $\P$ such that $\supp(x)\subseteq [n,N]$ for all $x\in \P_0$ (actually, one might even find such a subset $\P_0$ of cardinality $\omega_n)$. Hence, $\P_0$ is a $(2^{1/M}+ \e)$-separated subset of the unit ball of
\[ \bigoplus_{k=n}^N \ell_{p_k}(\omega_k). \]
However, the Kottman constant of said space equals $\max\{2^{1/p_k} \colon n\leq k\leq N\}$, which is at most $2^{1/M}$ because $p_k\geq M$ for all $k\geq n$. This contradicts the fact that $\P_0$ is $(2^{1/M}+ \e)$-separated and proves \eqref{eq: K of tail part}.

We now proceed with the proof. Since $\ell_{p_n}(\omega_n)$ is ${(<\omega_n)}$-$\phi_{p_n}$-octahedral by \Cref{lemma: ell_p octa}, \Cref{thm: phi-octa sum Y} implies that $\ell_{p_n}(\omega_n)\oplus_p \Z$ is $(<\omega_n)$-$\psi_n$-octahedral, where $\psi_n= \phi_p\circ \phi_{p_n}$. Thus, we are in position to apply \Cref{thm: gamma of phi-octa} to the spaces $\ell_{p_n}(\omega_n)\oplus_p \Z$ and $\Y$ (observing that $\dens(\Y)=\omega_{n-1}$ and $\omega_n$ is regular) to reach the conclusion that
\[ \gamma(\X_p)\geq\frac{2}{K(\ell_{p_n}(\omega_n)\oplus_p \Z;\omega_n)}\overset{\eqref{eq: K of tail part}}{\geq} \frac{2}{2^{1/M}}. \]
Since $M$ was arbitrary, we reach the conclusion that $\gamma(\X_p)=2$, and we are done.
\end{proof}

\begin{remark} Actually, one can directly conclude from \Cref{thm: phi-octa sum Y} that $\X_p$ itself is $(<\omega_n)$-$\psi_n$-octahedral for all $n$ and then slightly modify the above argument. In fact, \Cref{thm: gamma of phi-octa} yields the lower bound $\gamma(\X_p)\geq 2/K(\X_p;\omega_n)$ and then this Kottman constant is estimated via \Cref{fact: K with a small summand} and \eqref{eq: K of tail part}.
\end{remark}

\begin{remark} Let us also remark that, in order for our argument to work, we need $p_k\to \infty$ and index sets of larger and larger cardinality. Therefore, the argument above cannot produce a separable or super-reflexive example, \Cref{probl: separable gamma=2}. Here it is perhaps worth noticing that if one considers an increasing sequence of finite cardinals, the resulting space
\[ \X_p= \left(\bigoplus_{k=1}^\infty \ell_{p_k}^k \right)_{\ell_p} \]
is $\phi_p$-octahedral by \Cref{lemma: ell_p octa} and separable; further, $K(\X_p)= 2^{1/p}$. Therefore, $\gamma(\X_p)= \gamma^*(\X_p)= \frac{2}{2^{1/p}}$ by \Cref{thm: subgroup theta} and \Cref{thm: gamma of phi-octa}.
\end{remark}

In conclusion to this section, we notice that finite-dimensional spaces satisfy $\gamma(\X)<2$. This is a folklore fact, mentioned without proof, \emph{e.g.}, in \cite{Zong7/4}. Since at first it wasn't entirely clear to us how to prove it, we shall give the argument below.
\begin{fact}\label{fact: finite-dim gamma} For every finite-dimensional normed space $\X$ one has $\gamma(\X)< 2$.
\end{fact}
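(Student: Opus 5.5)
The plan is to exhibit a $2$-separated set that is $r$-dense for some $r<2$, namely a suitably scaled lattice; this even gives $\gamma^*(\X)<2$. The natural tool is a lattice whose covering radius is strictly less than twice its packing radius, obtained from a successive-minima/Minkowski-reduced basis argument. Alternatively, one can take a maximal $2$-separated set and exploit compactness to locally improve it. I would go with the lattice approach, since it avoids handling infinitely many holes.

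Let $n=\dim\X$ and identify $\X$ with $\R^n$. Choose a lattice $\Lambda$ realising the minimum norm $\lambda_1=2$ and which is \emph{saturated}: no point $x$ is at distance $\geq 2$ from all of $\Lambda$ while $\Lambda+\ZZ x$ stays $2$-separated. Concretely, I would start from any lattice with minimum $2$ and, as long as some point $x$ has $\dist(x,\Lambda)\geq 2$, replace $\Lambda$ by a lattice containing it and $x$. The key inequality is the one already used in the proof of \Cref{thm: subgroup theta}: if $\dist(x,\Lambda)\geq 2$, then $\Lambda+\ZZ x$ is still $2$-separated, since for $k\geq 2$
\[ \|d+kx\|=k\|d/k+x\|, \]
and $d/k$ ranges over $\frac1k\Lambda$, which is handled by working in the quotient torus. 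The iteration must terminate because the covolume drops by a factor at least $2$ while Minkowski's theorem bounds the covolume of a lattice with minimum $2$ from below by $\mathrm{vol}(B_\X)/1$. Hence one reaches a lattice $\Lambda$ with minimum $\geq2$ and covering radius $\mu\leq 2$.

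The main obstacle is the strict inequality $\mu<2$, because the saturation step is only guaranteed to bring the covering radius down to $\leq 2$. To get strictness I would argue as follows. The covering radius of a lattice is attained, by compactness of $\X/\Lambda$, at a deep hole $x$ with $\dist(x,\Lambda)=\mu$. If $\mu=2$, then the $2$-separation check above works for $k=1$. For $k\geq 2$ we need $\|d+kx\|\geq 2$; since $\mu=2$ we have $\dist(kx,\Lambda)$ at least $\dist(x,\frac1k\Lambda)$, and this needs care. I would instead use a cleaner route. Pick a basis $v_1,\dots,v_n$ of a lattice of minimum $2$, and use the parallelepiped bound $\mu\leq\frac12\sum\|v_i\|$ together with a scaling trick in one direction. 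Take $\Lambda=2\ZZ v_1\oplus\Lambda'$, where $v_1$ is a unit vector and $\Lambda'$ lies in a complementary hyperplane $H$ that is Birkhoff--James orthogonal from $v_1$, i.e.\ $\|h+tv_1\|\geq\|h\|$. Then proceed by induction on dimension, using compactness to get a uniform gap at each stage.

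Honestly, the cleanest route for strictness is likely a compactness-perturbation argument. Take a maximal $2$-separated lattice $\Lambda$, so that its covering radius is $\leq 2$ by the saturation above. Suppose every deep hole $x$ has distance exactly $2$. Deep holes form a compact subset of the torus, and adjoining $x$ fails only because some $\|d+kx\|<2$ with $k\geq2$. One then shrinks $\Lambda$ slightly, replacing it by $(1-\eta)\Lambda$ rescaled, and adds $x$. I expect this finite bookkeeping, controlling the multiples $kx$ for $2\leq k\leq$ the order of $x$ modulo $\Lambda$, to be the genuinely hard step. My fallback is to avoid lattices altogether: cover a fundamental torus by finitely many pieces, and greedily add non-lattice periodic points of a $2$-separated periodic set until every point is within $2-\eta$. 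Compactness of the torus then gives a uniform $\eta>0$.
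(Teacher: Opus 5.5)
Your main (lattice) route has a genuine gap at the saturation step. The estimate you borrow from the proof of \Cref{thm: subgroup theta} controls the multiples $k\geq 2$ only because there $x_\beta$ is at distance $\geq\theta/2$ from the whole closed span $\Z$ of the previous group. Once $\Lambda$ has full rank in a finite-dimensional $\X$, that span is $\X$ itself, and $\dist(x,\Lambda)\geq 2$ gives no control over $kx$ modulo $\Lambda$. Concretely, in $\X=\R$ take $\Lambda=10\ZZ$ and $x=3$: then $\dist(x,\Lambda)=3$, but $\Lambda+\ZZ x=\ZZ$ is not $2$-separated. With $x=3\sqrt2$ the group $\Lambda+\ZZ x$ is not even discrete, so the covolume argument also breaks down.

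Suppose you restrict to torsion points all of whose non-zero multiples stay at distance $\geq 2$ from $\Lambda$. Then the procedure terminates, but nothing shows the resulting lattice has covering radius $\leq 2$. Saturation only says that every far point either is not torsion or has some multiple close to $\Lambda$; that says nothing about the point's own distance to $\Lambda$. Your strictness routes do not repair this. For instance, $2\ZZ v_1\oplus\Lambda'$ with $\Lambda'\subseteq\ker f$, $f$ norming $v_1$, is $2$-separated. However, the only covering estimate it gives is $1+\mu(\Lambda')$, and $\mu(\Lambda')\geq 1$ for every $2$-separated $\Lambda'$ in a non-zero space, so this bound never drops below $2$. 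The lattice route is in effect aiming at $\gamma^*(\X)<2$ for all finite-dimensional $\X$, which the paper does not claim. It only quotes Rogers' bound $3$ and Butler's $2+o(1)$ for lattices, and in \Cref{sec: problem} it explicitly entertains the possibility that $\gamma^*(\R^n)\geq 2$ for some $n$.

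Your fallback, by contrast, is the right idea and is essentially the paper's argument; it needs no saturation at all. Fix any $2$-separated full-rank lattice $\Lambda$. Give $\mathbb{T}\coloneqq\X/\Lambda$ the quotient metric $d(\pi x,\pi y)=\dist(x-y,\Lambda)$, and take a maximal $2$-separated set $\P\subseteq\mathbb{T}$, which is finite by compactness. Maximality means the $1$-Lipschitz function $t\mapsto\min_{p\in\P}d(t,p)$ is $<2$ at every point of the compact space $\mathbb{T}$, so its maximum $r$ is $<2$. In particular, you do not ``add points until everything is within $2-\eta$''; you add until maximal, and the uniform gap then comes from compactness. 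Next, $\pi^{-1}(\P)$ is $2$-separated: distinct points with the same image differ by a non-zero element of $\Lambda$, and points with different images are at distance $\geq d\geq 2$. It is also $r$-dense, because the distance to the discrete set $\Lambda$ is attained. This periodic, non-lattice packing gives $\gamma(\X)\leq r<2$, which is exactly the statement and says nothing about $\gamma^*$.
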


\begin{proof} Let $n$ be the dimension of $\X$ and fix a rank-$n$ lattice $\Lambda$ in $\X$ that is $2$-separated. Consider the torus $\mathbb{T}^n\coloneqq \X/ \Lambda$ with canonical projection $\pi\colon \X\to \mathbb{T}^n$ and its canonical quotient distance $d_{\mathbb{T}^n}(\pi(x), \pi(y))\coloneqq \dist(x-y, \Lambda)$. Take a maximal $2$-separated set $\P$ in $\mathbb{T}^n$; by compactness, $\P$ is $r$-dense in $\mathbb{T}^n$ for some $r<2$. Hence, $\pi^{-1}(\P)$ is $2$-separated (here we use that $\Lambda$ is also $2$-separated) and $r$-dense in $\X$, whence $\gamma(\X)\leq r<2$.
\end{proof}

\section{Applications to uniformly convex and Lebesgue spaces} \label{sec: UC and Leb}
In this section we combine the lower bound from \Cref{sec: subgroup} with the upper bound from \Cref{sec: phi-octahedral} and obtain two-sided estimates on the packing constants. We begin with a general result that pertains to all $(<\kappa)$-$\phi$-octahedral normed spaces and we then move to the main part of the section, about uniformly convex spaces. We introduce a modulus $\p_\X$, study its properties, and show that uniformly convex of density $\kappa$ spaces are $(<\kappa)$-$\p_\X$-octahedral. In particular, uniformly convex normed spaces $\X$ satisfy $\gamma(\X)> 1$ and $\gamma^*(\X)< 2$; this latter fact is particularly relevant because of \Cref{sec: gamma=2} and \Cref{probl: separable gamma=2}. Finally, in \Cref{sec: L_p} we specialise to Lebesgue spaces, where the constants can be computed and not merely estimated.

\begin{proposition}\label{prop: upper bound phi-octahedral} If $\X$ is a $(<\kappa)$-$\phi$-octahedral normed space of density $\kappa$, then
\[ \frac{2}{K(\X;\kappa)}\leq \gamma(\X)\leq \gamma^*(\X)\leq \frac{2}{1+\phi(1)}. \]
In particular, $\gamma^*(\X)< 2$.
\end{proposition}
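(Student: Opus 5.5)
The statement has three parts. The lower bound is \Cref{thm: gamma of phi-octa} with $\Y=\{0\}$, the middle inequality is trivial, and the upper bound comes from \Cref{thm: subgroup theta} with $\theta$ close to $1+\phi(1)$. So the plan is to verify the hypothesis of \Cref{thm: subgroup theta}.

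\textbf{Lower bound.} Take $\Y=\{0\}$ in \Cref{thm: gamma of phi-octa}; the condition $\dens(\Y)<\cf(\kappa)$ holds trivially, and this gives $\gamma(\X)\geq 2/K(\X;\kappa)$.

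\textbf{Middle inequality.} $\gamma(\X)\leq\gamma^*(\X)$ holds because every subgroup is in particular a set.

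\textbf{Setting up the upper bound.} Fix $\e>0$ with $\theta\coloneqq(1-\e)(1+\phi(1))>1$; this is possible since $\phi(1)>0$. Let $\Z$ be a subspace with $\dens(\Z)<\kappa$. By $(<\kappa)$-$\phi$-octahedrality there is $x\in S_\X$ that is $(\phi,\e)$-orthogonal to $\Z$. Taking $\lambda=-1$ in \eqref{eq: phi-orthogonal} gives, for every $z\in S_\Z$,
\[ \|x-z\|=\|z-x\|\geq (1-\e)(1+\phi(1))=\theta. \]
Hence $\dist(x,S_\Z)\geq\theta$, which is exactly the hypothesis \eqref{eq: x far from S_Z}.

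The one point to check is that \Cref{thm: subgroup theta} requires $\X$ to be infinite-dimensional. This follows from $\phi$-octahedrality itself: if $\X$ were finite dimensional, we could take $\Z=\X$, and then $\dens(\Z)<\kappa$ since $\kappa$ is infinite. For $\lambda=0$ and $z=-x\in S_\Z$ we would get $\|z+\lambda x\|=1$. In fact any $\lambda$ works: $z=-\lambda x/\|\lambda x\|$ gives norm $|\,|\lambda|-1|$, which is $0<(1-\e)(1+\phi(1))$ at $\lambda=1$, a contradiction. So $\X$ is infinite-dimensional.

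\textbf{Conclusion of the upper bound.} \Cref{thm: subgroup theta} now gives $\gamma^*(\X)\leq 2/\theta$. Letting $\e\to0$ yields $\gamma^*(\X)\leq 2/(1+\phi(1))$, and this is $<2$ because $\phi$ is a positive modulus. There is no real obstacle; the only subtle step is matching the sign of $\lambda$ to obtain the distance estimate, which $\lambda=-1$ handles directly.
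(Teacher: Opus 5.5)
Your proof is correct and follows essentially the paper's argument: the lower bound is the case $\Y=\{0\}$ of \Cref{thm: gamma of phi-octa}, and the upper bound verifies the hypothesis of \Cref{thm: subgroup theta} with $\theta=(1-\e)(1+\phi(1))$, which your choice $\lambda=-1$ makes explicit. Your separate infinite-dimensionality check is harmless but unnecessary, because under the paper's convention a space of infinite density $\kappa$ is automatically infinite dimensional.
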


\begin{proof} The lower bound is just a particular case of \Cref{thm: gamma of phi-octa}. For the upper bound, it is enough to observe that, for each $\e>0$, every $(<\kappa)$-$\phi$-octahedral normed space of density $\kappa$ satisfies the assumption of \Cref{thm: subgroup theta} with $\theta= (1-\e) (1+ \phi(1))$.
\end{proof}

We now move to uniformly convex spaces and we start by introducing a suitable modulus for them. Let us first consider  the \emph{duality map} $\J_\X\colon S_\X\to2^{{\X^*}}$  defined by
\[ \J_\X(x)\coloneqq \{x^*\in {\X^*}\setminus\{0\}\colon x^*(x)=\|x^*\|\}, \qquad x\in S_\X. \]

\begin{definition}\label{def: phi_X} Given a normed space $\X$, the \emph{tangential modulus of convexity} of $\X$ is the function $\p_\X\colon [0,\infty)\to [0,\infty)$ defined by
\[ \p_\X(t)\coloneqq \inf\{\|x+ tv\|-1\colon x\in S_\X, f\in \J_\X(x), v\in \ker(f)\cap S_\X\},\qquad t\in[0,\infty). \]
\end{definition}

Notice that, since the functional $f$ only intervenes via its kernel, it is equivalent to consider functionals such that additionally $\|f\|=1$. Let us further observe that we can also define the tangential modulus of convexity by means of Birkhoff-James orthogonality. Recall that $x \in\X$ is said to be \emph{Birkhoff-James orthogonal} to $y\in \X$ (denoted $x\perp_{\rm BJ} y$) if $\|x\|\leq \|x+\lambda y\|$ for every $\lambda \in\R$. It is immediate to see that
\[ \p_\X(t)= \inf\{\|x+ tv\|-1\colon x,v\in S_\X, x\perp_{\rm BJ} v \},\qquad t\in[0,\infty). \]
The modulus $\p_\X$ is a variation of Milman moduli of uniform convexity introduced and systematically investigated in \cite{Milman1971} (see also \cite{GoebelPrus}*{Chapter~7}). For example, if $\mathcal{H}$ is a Hilbert space (of dimension at least $2$), it is easy to compute directly that $\p_\mathcal{H}(t)= \sqrt{1+t^2}- 1$. The following proposition is quite standard and follows the same lines as in \cite{GoebelPrus}*{Chapter~7}.   

\begin{proposition}\label{prop: varphi modulus} For every normed space $\X$ the following assertions hold:
\begin{enumerate}
	\item\label{item: lip varphi} $\p_\X$ is $1$-Lipschitz and $\p_\X(0)=0$ (in particular, $\p_\X(t)\to 0$ as $t\to 0^+$);
    \item\label{item: varphi subspaces} if $\Y$ is a subspace of $\X$, then $\p_\X(t)\leq\p_\Y(t)$ for all $t\geq 0$;
    \item\label{item: non decreasing varphi}  $\p_\X(\lambda t)\leq \lambda \p_\X(t)$ for $\lambda\in (0,1)$ and $t\in[0,\infty)$ (thus, $\p_\X$ is non-decreasing);
    \item\label{item: ineq1 varphi} $\delta_\X\left(\frac{t}{1+ \p_\X(t)}\right)\leq \frac{\p_\X(t)}{1+ \p_\X(t)}$, whenever $t\in[0,2)$;
    \item\label{item: ineq2 varphi} if $t\in[0,2)$ and $\frac{t}{2}-2\delta_\X(t)\geq 0$, then $\p_\X(\frac{t}{2}- 2\delta_\X(t))\leq \delta_\X(t)$;
	\item\label{item: characterization varphi} $\p_\X$ is positive on $(0,\infty)$ if and only if $\delta_\X$ is positive on $(0,2)$;
    \item\label{item: varphi is a modulus} $\p_\X$ is a positive modulus if and only if $\X$ is uniformly convex.
\end{enumerate}
\end{proposition}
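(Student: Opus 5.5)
I would prove the seven items in order, each reducing to the Birkhoff--James description of $\p_\X$ together with elementary convexity.

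For \eqref{item: lip varphi}, fix admissible $x,v$. The map $t\mapsto \|x+tv\|-1$ is $1$-Lipschitz, since $\|v\|=1$, and it vanishes at $t=0$. An infimum of $1$-Lipschitz functions that is finite (it is bounded below by $0$, because $x\perp_{\rm BJ}v$ gives $\|x+tv\|\geq 1$) is again $1$-Lipschitz, and $\p_\X(0)=0$.

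For \eqref{item: varphi subspaces}, note that Birkhoff--James orthogonality between elements of $\Y$ means the same in $\Y$ as in $\X$. Hence the infimum defining $\p_\X$ is taken over a larger set.

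For \eqref{item: non decreasing varphi}, the function $g(t)=\|x+tv\|-1$ is convex with $g(0)=0$. Thus $g(\lambda t)\leq \lambda g(t)$, and taking infima gives the claim. Monotonicity then follows exactly as in \Cref{lemma: basic modulus}.

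For \eqref{item: ineq1 varphi}, take admissible $x,v$ and set $s\coloneqq\|x+tv\|=1+\p$, where $\p$ is close to $\p_\X(t)$. Put $y\coloneqq (x+tv)/s\in S_\X$ and compare it with $x$. Write $\frac{x+y}{2}$ in terms of $x$ and $v$. Birkhoff--James orthogonality gives
\[ \left\|\frac{x+y}2\right\|\geq \frac{1+1/s}{2}\geq \frac{1}{s}, \]
so $1-\|\frac{x+y}{2}\|\leq \frac{\p}{1+\p}$. For the separation, BJ orthogonality again gives
\[ \|x-y\|=\frac{1}{s}\,\|(s-1)x-tv\|\geq \frac{t}{s}, \]
using $\|v+\mu x\|\geq$ \dots. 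Here I must be careful: $v\perp x$ is not given, only $x\perp v$. So I would instead bound $\|x-y\|\geq$ the distance from $x$ to the line through $y$. Alternatively, I would choose the pair so that orthogonality runs in the needed direction, possibly by using the functional $f$ with $f(v)=0$ to estimate a suitable norming functional for $v$.

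I expect this step, and \eqref{item: ineq2 varphi}, to be the main technical obstacle. The paper says they follow \cite{GoebelPrus}*{Chapter 7}, so I would adapt Milman's computations there. Once $\|x-y\|\geq t/(1+\p)$ is established, monotonicity of $\delta_\X$ gives the claim.

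For \eqref{item: ineq2 varphi}, start from $x,y\in S_\X$ with $\|x-y\|=t$ and $\|\frac{x+y}{2}\|=1-\delta$, where $\delta$ is close to $\delta_\X(t)$. Let $m\coloneqq\frac{x+y}{2}$ and $u\coloneqq m/\|m\|$, take $f\in\J_\X(u)$, and decompose $\frac{x-y}{2}=\alpha u+w$ with $w\in\ker f$. Since $f(x),f(y)\leq 1$ and $f(m)=1-\delta$, we get $|\alpha|\leq \delta$. Hence $\|w\|\geq \frac t2-\delta$. Then $x=(1-\delta+\alpha)u+w$, and normalising shows that $\|u+\tau v\|$, with $v=w/\|w\|$ and $\tau\approx \frac{t}{2}-2\delta$, is at most about $1+\delta$. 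Finally use \eqref{item: non decreasing varphi} to absorb the rescaling.

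Items \eqref{item: characterization varphi} and \eqref{item: varphi is a modulus} are then consequences. If $\p_\X(t)=0$ for some $t>0$, then \eqref{item: ineq1 varphi} gives $\delta_\X(t)=0$ with $t<2$. Conversely, if $\delta_\X(t)=0$ for some $t\in(0,2)$, then \eqref{item: ineq2 varphi} gives $\p_\X(t/2)=0$. For \eqref{item: varphi is a modulus}, combine \eqref{item: lip varphi}, \eqref{item: non decreasing varphi}, \Cref{lemma: basic modulus}, and \eqref{item: characterization varphi}. In addition, $\delta_\X>0$ on $(0,2)$ is equivalent to uniform convexity, using the continuity of $\delta_\X$ on $[0,2)$ and its monotonicity up to $2$.
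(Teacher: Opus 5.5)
Your items (i)--(iii) are fine. Your route for (v) can also be completed: after swapping $x$ and $y$ you may assume $\alpha\ge 0$, so $\|u+\tau v\|=\frac{1}{1-\delta+\alpha}\le\frac{1}{1-\delta}$ with $\tau\ge\frac t2-\delta$. Then (iii) with $\lambda=\frac{t/2-2\delta}{t/2-\delta}\le 1-\delta$ (this uses $t<2$) gives $\p_\X(\frac t2-2\delta)\le\delta$. Your remark that (v) applies directly whenever $\delta_\X(t)=0$, which makes Nordlander's inequality unnecessary in (vi), is correct.

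The genuine gap is (iv). The implication ``$\delta_\X>0$ on $(0,2)$ $\Rightarrow$ $\p_\X>0$'' in (vi) and (vii) rests on it. Write $y=(x+tv)/s$ and $c\coloneqq(s-1)/t$. Then $x-y=\frac ts(cx-v)$, so the bound $\|x-y\|\ge t/s$ you need is equivalent to $\|v-cx\|\ge 1$. That is a Birkhoff--James orthogonality of $v$ to $x$, and it is not merely unproved but false in general.

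For a counterexample, take $\ell_4^2$, $x=(a,b)$ with $0<a<b$, and $v$ the positive unit multiple of $(b^3,-a^3)$. Then $x\perp_{\rm BJ}v$. The derivative of $\|v-cx\|_4^4$ at $c=0$ equals $-4ab(b^8-a^8)/(a^{12}+b^{12})^{3/4}<0$, so $\|v-cx\|<1$ for small $c>0$. Meanwhile strict convexity and smoothness at $x$ give $c\to 0^+$ as $t\to0^+$. Hence $\|x-y\|<t/s$ for small $t$.

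Your fallbacks do not help. Since $\dist(x,\R y)\le\|x-y\|$, the distance to the line through $y$ cannot give $t/s$. You also cannot re-choose the pair, because the infimum defining $\p_\X$ runs over pairs with $x\perp_{\rm BJ}v$. Your bounds $\|v-cx\|\ge\max\{c,1-c\}$ do yield $\delta_\X(\frac{t}{2s})\le\frac{s-1}{2s}$, which is enough for (vi) but not for (iv).

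The missing idea is not to normalise $x+tv$ at all. Since $\delta_\X$ is defined through pairs in $B_\X$, not only in $S_\X$, rescaling gives $\|\frac{z_1+z_2}{2}\|\le r\bigl(1-\delta_\X(t/r)\bigr)$ whenever $z_1,z_2\in rB_\X$ and $\|z_1-z_2\|\ge t$. Apply this to $z_1=x$ and $z_2=x+tv$, where $x\perp_{\rm BJ}v$ are unit vectors with $\|x+tv\|\le r\coloneqq 1+\p_\X(t)+\eta$.

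Now $\|z_1-z_2\|=t$ exactly, so no orthogonality of $v$ to $x$ is needed. The midpoint satisfies $\|x+\frac t2 v\|\ge 1$ directly from $x\perp_{\rm BJ}v$. Thus $1\le r\bigl(1-\delta_\X(t/r)\bigr)$. Letting $\eta\to0^+$ and using continuity of $\delta_\X$ on $[0,2)$ (note $t/r\le t<2$) gives $\delta_\X\bigl(\frac{t}{1+\p_\X(t)}\bigr)\le\frac{\p_\X(t)}{1+\p_\X(t)}$. This is exactly how the paper proves (iv).
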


\begin{proof} \eqref{item: lip varphi} directly follows from the fact that $t\mapsto \|x+tv\|$ is $1$-Lipschitz, and \eqref{item: varphi subspaces} follows by extending functionals in $\J_\Y(x)$ to functionals in $\J_\X(x)$, where $x\in \Y$.

For \eqref{item: non decreasing varphi}, fix $x\in S_\X$, $f\in \J_\X(x)$, and $v\in \ker(f) \cap S_\X$. Then, we have
\[ \p_\X(\lambda t)\leq \|(1-\lambda)x+ \lambda(x+tv)\| -1\leq (1-\lambda)\|x\|+ \lambda\|x+ tv\|-1= \lambda(\|x+ tv\|-1). \]
Passing to the infimum, the conclusion follows.

We now prove \eqref{item: ineq1 varphi}. Notice first that, if $r>0$, $x,y\in r B_\X$, and $\|x-y\|\geq t$, then
\[ \left\|\frac{x+ y}{2} \right\|\leq r\left(1- \delta_\X \left(\frac{t}{r} \right)\right). \]
Now, fix $\eta>0$ and find $x\in S_\X$, $f\in \J_\X(x)$, and $v\in \ker(f) \cap S_\X$ such that $1+ \p_\X(t)+ \eta\geq \|x+ tv\|$. Applying the previous inequality to $x$ and $x+tv$, we obtain
\[ 1\leq \left\| \frac{x+ tv+ x}{2}\right \|\leq (1+\p_\X(t)+ \eta) \left(1- \delta_\X\left (\frac{t}{1+ \p_\X(t)+ \eta} \right)\right), \]
where the first inequality follows from the fact that $f(x+ \frac{t}{2}v)=1$. Letting $\eta\to 0^+$ and using the continuity of
$\delta_\X$ on $[0,2)$ we get
\[ 1\leq (1+\p_\X(t)) \left(1- \delta_\X\left (\frac{t}{1+ \p_\X(t)} \right)\right), \]
which directly leads to the desired inequality.

Next, we prove \eqref{item: ineq2 varphi}. Notice that if $\frac{t}{2}-2\delta_\X(t)=0$, the conclusion is trivial, since $\p_\X(0)=0$. Thus, we assume that $\frac{t}{2}-2\delta_\X(t)$ is positive. Let $\eta>0$ be such that $\frac{t}{2}- 2(\delta_\X(t)+ \eta)>0$. By definition of $\delta_\X(t)$, there exist $x,y\in S_\X$ so that $\|x-y\| =t$ and $1-\left\| \frac{x+ y}{2} \right\| <\delta_\X(t)+ \eta$. Define
\[ z\coloneqq \frac{x+y}{2}, \qquad z'\coloneqq \frac{z}{\|z\|}, \]
and take any $f\in\J_\X(z')$. Since $1- \delta_\X(t)- \eta \leq \|z\|\leq 1$, we see that $\|z- z'\|\leq \delta_\X(t)+ \eta$. Notice that $\frac{f(x)+ f(y)}{2}= \|z\|> 1-\delta_\X(t) -\eta$. Hence, without any loss of generality, we can assume that $f(x)> 1- \delta_\X(t)- \eta$. Consider the vector $x'\coloneqq x+ (1-f(x))z'$. Then, $f(x')= 1$ (hence, $x'- z'\in \ker(f)$) and $\|x-x'\|\leq \delta_\X(t) +\eta$. Next, we observe that
\begin{equation}\label{eq: x'- z' large}
    \|x'-z'\|\geq \|x-z\|- \|x-x'\|- \|z-z'\|\geq \frac{t}{2}-2\left(\delta_\X(t)+\eta\right)
\end{equation}
and that 
\[ \|x'\|\leq \|x\|+ \delta_\X(t) +\eta= 1+ \delta_\X(t) +\eta. \]
Finally, denote $t'\coloneqq \|x'-z'\|$ and $v\coloneqq \frac{x'-z'}{\|x'-z'\|}\in \ker(f) \cap S_\X$. By \eqref{eq: x'- z' large} and the fact that $\p_\X$ is non-decreasing, we obtain
\[ \p_\X\left( \frac{t}{2}- 2(\delta_\X(t)+ \eta) \right)\leq \p_\X(t')\leq \|z'+ t'v\|-1 =\|x'\| -1\leq \delta_\X(t) +\eta. \]
Letting $\eta\to 0^+$ and using the continuity of $\p_\X$, we obtain our inequality.

For the proof of \eqref{item: characterization varphi}, it is clear from \eqref{item: ineq1 varphi} that, if $\delta_\X$ is positive on $(0,2)$, then $\p_\X$ is positive on $(0,2)$, and hence on $(0,\infty)$ as well. For the converse implication, by Nordlander inequality, $\delta_\X(t)\leq 1-\sqrt{1-t^2/4}$; thus, there is $t_0\in (0,2)$ such that $\frac{t}{2}-2\delta_{\X}(t)>0$, whenever $t\in(0,t_0)$. The inequality in \eqref{item: ineq2 varphi} implies that, if $\p_\X$ is positive on $(0,\infty)$, then $\delta_\X$ is positive on  $(0,t_0)$ (and hence on $(0,2)$).

Finally, \eqref{item: varphi is a modulus} immediately follows from \eqref{item: lip varphi}, \eqref{item: non decreasing varphi}, \eqref{item: characterization varphi}, and the fact that $\X$ is uniformly convex if and only if $\delta_{\X}(t)>0$ for every $t\in(0,2)$.
\end{proof}

We will also need the fact that $\p_\X$ depends continuously on $\X$. A similar proof also gives that $\p_\X$ is invariant under taking the completion of $\X$ and we also record this, even if we don't require it. Since we need to approximate simultaneously a point and a functional, it is not surprising that we will use the Bishop--Phelps--Bollob\'as theorem, whose statement we recall (see, \emph{e.g.}, the proof of \cite{Phelps}*{Theorem~3.18}). \emph{Let $\X$ be a Banach space and $C\subseteq \X$ be a closed convex bounded set. Suppose that $\e>0$, $ f\in \X^*$, and $x\in C$ are such that $f(x)\geq \sup  f(C)-\e$. Then, for each $\alpha>0$, there exist $y\in C$ and $ g\in {{\X}^*}$ such that
\[ \|x-y\|\leq \frac{\e}{\alpha}, \qquad \|f- g\|\leq \alpha, \qquad g(y)=\sup g(C). \]}

\begin{fact}\label{fact: varphi d_BM cont} For every $t\geq 0$, the map $\X\mapsto \p_\X(t)$ is continuous with respect to the Banach--Mazur distance. Further, $\p_\X(t)= \p_{\hat{\X}}(t)$, where $\hat{\X}$ is the completion of $\X$.
\end{fact}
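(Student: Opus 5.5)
We aim for a quantitative statement: if $T\colon \X\to\Y$ is an isomorphism with $\|x\|\leq\|Tx\|\leq M\|x\|$, then $\p_\Y(t)$ is bounded in terms of $\p_\X$ evaluated at a slightly perturbed argument, with errors tending to $0$ as $M\to1$. By symmetry (apply the same bound to $T^{-1}$, suitably rescaled), this gives continuity in the Banach--Mazur distance. The completion statement follows from the same kind of argument, with $T$ replaced by the isometric inclusion $\X\subseteq\hat\X$; there the only issue is approximating points of $\hat\X$ by points of $\X$.

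For the first part we first pass to completions, which is harmless once the second part is proved. Then we can use the Bishop--Phelps--Bollob\'as theorem in $\X$.

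\begin{enumerate}
\item Fix $\eta>0$ and choose $y\in S_\Y$, $g\in\J_\Y(y)$ with $\|g\|=1$, and $w\in\ker g\cap S_\Y$ such that $\|y+tw\|-1\leq\p_\Y(t)+\eta$.
\item Pull back: set $x=T^{-1}y/\|T^{-1}y\|$ and $f=g\circ T$, normalised. Then $\|f\|\leq M$ and $f(T^{-1}y)=1$, while $\|T^{-1}y\|\geq 1/M$. Hence $f/\|f\|$ nearly norms $x$, with defect of order $M^2-1$.
\item Apply Bishop--Phelps--Bollob\'as with $C=B_\X$ and $\alpha$ of order $\sqrt{M^2-1}$. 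This gives $x'\in S_\X$ and $f'\in\J_\X(x')$ with $\|x-x'\|$ and $\|f-f'\|$ both of order $\sqrt{M^2-1}$.
\item Take $v=T^{-1}w$, which lies in $\ker f$ and satisfies $1/M\leq\|v\|\leq1$. Correct it into $\ker f'$ by setting $v'=v-f'(v)x'$, and normalise. Since $|f'(v)|\leq\|f'-f\|$, the correction is small.
\item Estimate
\[ \p_\X(t)\leq \|x'+tv'/\|v'\|\|-1 \leq \|T^{-1}(y+tw)\|\cdot\text{(scaling factor)}+O_t\bigl(\sqrt{M^2-1}\bigr)-1 \leq \p_\Y(t)+\eta+o(1), \]
using that $\|T^{-1}\|\leq1$ and that all the scaling factors are $1+o(1)$ as $M\to1$. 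The error terms are linear in $t$, hence uniform for $t$ in bounded sets.
\item Exchange the roles of $\X$ and $\Y$ to get the reverse inequality, giving $|\p_\X(t)-\p_\Y(t)|\to0$ as $d_{BM}(\X,\Y)\to1$.
\end{enumerate}

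For the completion, the inequality $\p_{\hat\X}(t)\leq\p_\X(t)$ is immediate from \Cref{prop: varphi modulus}\eqref{item: varphi subspaces}, since $\X$ is a subspace of $\hat\X$. For the reverse inequality, take near-optimal $x\in S_{\hat\X}$, $f\in\J_{\hat\X}(x)$ with $\|f\|=1$, and $v\in\ker f\cap S_{\hat\X}$. Here $f$ restricts to a norm-one functional on $\X$. Approximate $x$ and $v$ by points of $\X$; then $f|_\X$ almost norms the approximant of $x$ within $\X$, and $f$ is almost zero on the approximant of $v$. Now apply Bishop--Phelps--Bollob\'as in $\hat\X$ to $f$ and the approximant $x_n$, obtaining $x''\in S_{\hat\X}$ and $g$ norming it.

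The obstacle is that $x''$ need not lie in $\X$. The first thing I would try instead is the Birkhoff--James formulation, which avoids functionals altogether. Note that $\p_\X(t)$ is the infimum of $\|x+tv\|-1$ over $x,v\in S_\X$ with $\min_\lambda\|x+\lambda v\|=1$. For approximants $x_n\to x$ and $v_n\to v$ in $\X$, we have $\min_\lambda\|x_n+\lambda v_n\|\to1$ by continuity, uniformly for $\lambda$ in compact sets. So it suffices to show that $\p_\X$ is stable under this kind of almost-orthogonality. For that I would choose $\lambda_n$ minimising $\lambda\mapsto\|x_n+\lambda v_n\|$ (with $\lambda_n\to0$) and replace $x_n$ by the normalised vector $(x_n+\lambda_n v_n)/\|x_n+\lambda_n v_n\|$. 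The minimiser is exactly Birkhoff--James orthogonal to $v_n$ within $\X$, and the changes are small, so the resulting value of $\|x+tv\|-1$ moves by $o(1)$.

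In fact this same minimiser trick also replaces Bishop--Phelps--Bollob\'as in the Banach--Mazur part: pull back $y,w$ via $T^{-1}$, then minimise over $\lambda$ to restore exact orthogonality. This makes both parts elementary, so I would use it throughout. The main care is keeping the minimising $\lambda$ controlled; it is bounded by roughly $2(M-1)$ via the triangle inequality.
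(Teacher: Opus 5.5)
There is a genuine gap, and it is in the step you decided to use throughout: the minimiser trick. Almost-orthogonality does not localise the minimiser. Knowing that $\min_\lambda\|x_n+\lambda v_n\|\approx 1\approx\|x_n\|$ only places $\lambda_n$ in the sublevel set $\{\lambda\colon \|x+\lambda v\|\leq 1+o(1)\}$. That set is a long interval whenever the unit sphere contains a segment through $x$ in direction $v$. The triangle inequality only gives $|\lambda_n|\lesssim 2$ (resp.\ $\lesssim 2M$), not $2(M-1)$.

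Here is a concrete failure. In $\ell_\infty^2$ take $x=(1,0)$ and $v=(0,1)$, so that $x\perp_{\rm BJ}v$ and $\|x+tv\|-1=0$ for $t\in[0,1]$. With $x_n=x$ and $v_n=(-1/n,1)\to v$, the function $\lambda\mapsto\max\{|1-\lambda/n|,|\lambda|\}$ has the \emph{unique} minimiser $\lambda_n=n/(n+1)\to 1$. Your normalised vector is then $(1,1)$, and $\|(1,1)+tv_n\|_\infty-1=t$. So you only obtain $\p(t)\leq t$ instead of $\p(t)\leq o(1)$.

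The Banach--Mazur part fails in the same way. The norm $\|(a,b)\|=(\max\{|a|,|b|\}+\delta|a-b|)/(1+2\delta)$ satisfies $\|z\|\leq\|z\|_\infty\leq(1+2\delta)\|z\|$. Pulling back $y=(1,0)$ and $w=(0,1)$ from $\ell_\infty^2$, the map $\lambda\mapsto\|(1,\lambda)\|$ has the unique minimiser $\lambda=1$, and the new pair again gives the value exactly $t$. Restoring exact orthogonality by sliding the point along the line $x+\lambda v$ can move it far away. You must perturb the orthogonal direction (equivalently, the functional) jointly with the point. That is precisely what Bishop--Phelps--Bollob\'as does, through its trade-off between $\e/\alpha$ and $\alpha$.

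Your Bishop--Phelps--Bollob\'as argument in steps 1--6 is sound once you are in a Banach space. You also correctly identified the obstacle for the completion: the point produced in $\hat\X$ need not lie in $\X$. The missing idea is to apply Bishop--Phelps--Bollob\'as not in the whole space but in the two-dimensional space $\W=\spn\{x_n,v_n\}\subseteq\X$, which is complete, using $f|_\W$ and $x_n$. This produces $y\in S_\W$ and $g\in\J_\W(y)$ close to $x_n$ and $f|_\W$. You then correct $v_n$ into $\ker g$ as in your step 4, and conclude via $\p_\X(t)\leq\p_\W(t)$ from \Cref{prop: varphi modulus}\eqref{item: varphi subspaces}. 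This is the paper's proof of the completion clause. The same two-dimensional reduction also handles the Banach--Mazur clause directly, without passing to completions: the paper applies the theorem to $B_\W$, viewed as a bounded convex subset of $(\spn\{x,v\},\n)$.
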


\begin{proof} To prove the first assertion, let $t\geq0$, $\e\in(0,\frac14)$, and let $(\Y,\nn\cdot)$ be a renorming of $\X$ such that $B_\X\subseteq B_\Y\subseteq (1+\e^2)B_\X$. Find $x\in S_\X$, $f_0\in \J_\X(x)$, and $v\in \ker(f_0) \cap S_\X$ such that $ \|x+ tv\|-1\leq \p_\X(t)+ \e$. Without loss of generality, we assume that $\|f_0\| =1$. Consider the 2-dimensional subspaces $\V\coloneqq (\spn\{x,v\},\n)$ and $\W\coloneqq (\spn\{x,v\},\nn\cdot)$ of $\X$ and $\Y$ respectively. In particular, $B_\V\subseteq B_\W\subseteq (1+\e^2)B_\V$. If we denote $f\coloneqq f_0\cut_\V\in S_{\V^*}$, then $\sup f(B_\W)\leq 1+\e^2$, hence $f(x)\geq\sup f(B_\W)-\e^2$. 

An application of the Bishop--Phelps--Bollob\'as theorem (to the set $B_\W$, as a closed convex bounded subset of $\V$, and the point $x\in B_\W$) yields the existence of $y\in B_\W$ and $g\in \V^*$ such that
\begin{equation}\label{eq: BPB}
    \|x-y\|\leq \e, \qquad \|f- g\|\leq \e, \qquad g(y)=\sup g(B_\W).
\end{equation}
Observe that $g(y)= \nn{g}\geq \|g\|\geq \|f\|-\e \geq\frac{1}{2}$; thus, $g$ is non-zero, which implies that necessarily $y\in S_{\W}$ and $g\in \J_\W(y)$. Define $w'\coloneqq v-\frac{g(v)}{g(y)}y$. Then $w'\in \ker(g)$ and
\[ \|v-w'\|\leq \left|\frac{g(v)}{g(y)} \right| \|y\| \leq 2(1+\e^2)|g(v)|\leq 4|(g- f)(v)|\leq 4\e, \]
implying in particular that $w'\neq 0$ (as $\e<\frac{1}{4}$). Further, $1-4\e\leq \|w'\|\leq 1+4\e$, thus, if we define $w''\coloneqq \frac{w'}{\|w'\|}$, we obtain that $\|w'- w''\|\leq 4\e$. Moreover, setting $w\coloneqq \frac{w'}{\nn{w'}}$, we clearly have $\|w-w''\|\leq \e^2$. As a consequence, 
\[ \|v-w\|\leq \|v-w'\|+ \|w'-w''\|+ \|w''-w\|\leq 4\e+ 4\e+ \e^2\leq 9\e. \] 
Finally, we obtain
\begin{eqnarray*}
    \p_{\Y}(t) \leq \p_{\W}(t)\leq \nn{y+tw} -1&\leq& \|y+tw\| -1\\
    &\leq& \|x+tv\|-1+\|y-x\|+t\|v-w\|\\
    &\leq& \p_{\X}(t)+2\e+9 t\e.
\end{eqnarray*}
For the converse inequality, let $\Y'$ be the renorming of $\Y$ such that $B_{\Y'}=\frac{1}{1+\e^2}B_\Y$. Then,  $B_{\Y'}\subseteq B_\X\subseteq (1+\e^2)B_{\Y'}$, and the previous part gives $\p_{\X}(t) \leq \p_{\Y'}(t)+2\e+9 t\e$. However, $\p_{\Y'}=\p_{\Y}$, as $\Y$ and $\Y'$ are isometric. Thus,
\[ |\p_{\X}(t)-\p_{\Y}(t)| \leq 2\e+9 t\e, \]
which proves the first clause.

For the second part we proceed similarly. First, observe that $\p_{\hat{\X}}(t)\leq \p_\X(t)$ by \Cref{prop: varphi modulus}\eqref{item: varphi subspaces}. To prove the other inequality, fix $\e\in(0,\frac14)$ and find $\hat{x}\in S_{\hat{\X}}$, $\hat{f}\in \J_{\hat{\X}}(\hat{x})$, and $\hat{v}\in \ker(\hat{f}) \cap S_{\hat{\X}}$ such that $\|\hat{x}+ t\hat{v}\|-1\leq \p_{\hat{\X}}(t)+ \e$. Up to a scaling, we assume that $\|\hat{f}\| =1$. Let us also find $x,v\in S_\X$ such that $\|\hat{x}- x\|\leq \e^2$ and $\|\hat{v}- v\|\leq \e$. Denote  $\W\coloneqq \spn\{x,v\}$ and $f\coloneqq  \hat{f}\cut_\W\in {{\W}^*}$. We clearly have 
\[ f(x)\geq \hat{f}(\hat{x})-\e^2\geq \sup f(B_{\W})-\e^2; \]
in particular, $\|f\|\geq 1-\e^2\geq 1-\e$. By the Bishop--Phelps--Bollob\'as theorem there exist $y\in B_\W$ and $g\in\W^*$ such that \eqref{eq: BPB} holds. Then, $\|g\|\geq \|f\|- \e\geq 1- 2\e\geq \frac{1}{2}$. So, $g$ is non-zero, and we have $y\in S_\W$ and $g\in \J_\W(y)$.

If we define $w'=v- \frac{g(v)}{g(y)}y$, we have that $w'\in \ker(g)$ and 
\[ \|v- w'\|= \frac{|g(v)|}{|g(y)|}\leq 2|g(v)|\leq 2\big( |g(v)- f(v)|+ |f(v)- \hat{f}(\hat{v})|\big) \leq 4\e. \]
As $\e< \frac{1}{4}$ and $\|v\|=1$, we get that $w'\neq 0$. Further, setting $w\coloneqq \frac{w'}{\|w'\|}$, we also have $\|w- w'\|\leq 4\e$, whence $\|v- w\|\leq 8\e$. Noting that $\|\hat{v}- w\|\leq 9\e$ and $\|\hat{x}- y\|\leq 2\e$, we finally conclude that
\[ \p_\X(t)\leq \p_\W(t)\leq \|y+tw\|-1\leq \|\hat{x}+ t\hat{v}\|-1+ 2\e+ 9t\e\leq \p_{\hat{\X}}(t)+ 3\e+ 9t\e. \]
Letting $\e\to 0^+$, we get the desired inequality, and  we are done.
\end{proof}

We are now in position to prove our main results on uniformly convex spaces.
\begin{theorem}\label{thm: UC implies phi-octa} Uniformly convex normed spaces $\X$ of density $\kappa$ are $(<\kappa)$-$\p_\X$-octahedral. 
\end{theorem}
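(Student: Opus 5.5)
The plan is to produce a single unit vector $x$ that lies in the kernel of a norming functional of (a dense set of) every $z\in S_\Z$. Then $z\perp_{\rm BJ} x$, and the Birkhoff--James description of $\p_\X$ gives $\|z+\lambda x\|\geq 1+\p_\X(|\lambda|)$ directly. By \Cref{prop: varphi modulus}\eqref{item: varphi is a modulus}, $\p_\X$ is a positive modulus, so the statement makes sense. By \Cref{fact: varphi d_BM cont} we have $\p_{\hat\X}=\p_\X$, and $\hat\X$ is uniformly convex with the same density $\kappa$. So I first work in the Banach space $\hat\X$, which is super-reflexive and in particular reflexive, and only at the end move the vector back into $\X$.

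Fix $\e>0$ and a subspace $\Z$ with $\dens(\Z)<\kappa$. Choose a dense subset $\{z_i\}_{i\in I}$ of $S_\Z$ with $|I|<\kappa$; if $\kappa=\omega$, then $\Z$ is finite dimensional and I instead take a finite $\eta$-net of $S_\Z$. For each $i$ pick $f_i\in\J_{\hat\X}(z_i)$ with $\|f_i\|=1$. I need a unit vector $\hat x\in\bigcap_i\ker f_i$, and this is the main obstacle. When $I$ is finite it is trivial, because a finite intersection of kernels has finite codimension in an infinite-dimensional space. For uncountable $\kappa$, let $\W=\overline{\spn}\{f_i\}\subseteq\hat\X^*$. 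Then $\dens(\W)<\kappa$. If the preannihilator $\W_\perp$ were $\{0\}$, then $\W$ would be weak$^*$-dense in $\hat\X^*$. By reflexivity the weak$^*$ and weak topologies coincide, so $\W$ would be weakly dense, hence norm dense, since it is a closed subspace. This would give $\dens(\hat\X^*)<\kappa$, contradicting $\dens(\hat\X^*)=\dens(\hat\X)=\kappa$ for reflexive spaces. So a unit $\hat x\in\W_\perp$ exists, and by construction $z_i\perp_{\rm BJ}\hat x$, giving $\|z_i+\lambda\hat x\|\geq 1+\p_\X(|\lambda|)$ for all $i$ and $\lambda$.

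The remaining steps are routine approximations. Choose $x\in S_\X$ with $\|x-\hat x\|\leq\eta'$. For $z\in S_\Z$ take $z_i$ with $\|z-z_i\|\leq\eta$. For $|\lambda|\leq 2/\e$ this gives
\[ \|z+\lambda x\|\geq 1+\p_\X(|\lambda|)-\eta-2\eta'/\e\geq (1-\e)\bigl(1+\p_\X(|\lambda|)\bigr), \]
provided $\eta+2\eta'/\e\leq\e$; this suffices because $1+\p_\X\geq1$. For $|\lambda|\geq 2/\e$ I use the cruder bound $\|z+\lambda x\|\geq|\lambda|-1\geq(1-\e)(1+|\lambda|)$. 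Combined with $\p_\X(t)\leq t$, which holds since $\|x+tv\|\leq 1+t$, this gives the required inequality. Hence $x$ is $(\p_\X,\e)$-orthogonal to $\Z$, which proves $(<\kappa)$-$\p_\X$-octahedrality.
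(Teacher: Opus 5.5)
Your proof is correct and follows the same route as the paper's. Both arguments take norming functionals at a (possibly approximately) dense subset of $S_\Z$, use reflexivity and $\dens(\X^*)=\kappa$ to find a unit vector in their common kernel, feed the resulting Birkhoff--James orthogonality into the definition of $\p_\X$, and finish with an approximation. The only difference is your detour through $\hat{\X}$, which uses $\p_{\hat{\X}}=\p_\X$ and treats large $|\lambda|$ separately via $\p_\X(t)\leq t$. This is a worthwhile refinement: the paper applies reflexivity directly to $\X$, which tacitly assumes completeness, since for a non-complete $\X$ the common kernel of fewer than $\kappa$ functionals can meet $\X$ only in $0$, whereas your version covers arbitrary normed spaces.
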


\begin{proof} Let $\Z$ be a closed subspace of $\X$ such that $\kappa_0\coloneqq \dens(\Z)<\kappa$, and $\e>0$. Take a subset $W$ of $S_\Z$ that is $\e$-dense and has cardinality $\kappa_0$; for each $w\in W$ pick an element $f_w$ in $\J_\X(w)$. Since $\X$ is, in particular, reflexive,  $w^*$-$\dens(\X^*)= \kappa$; therefore, there exists
\[ x\in S_\X\cap \bigcap_{w\in W}\ker(f_w). \]
By definition of $\p_\X$, for all $w\in W$, we have $\p_\X(|\lambda|)\leq \|w+ \lambda x\| -1$; hence,
\[ \|w+\lambda x\|\geq 1+\p_\X(|\lambda|), \qquad \mbox{for all }\, w\in W \mbox{ and } \lambda\in \R. \]
Now, if $z\in S_\Z$, we can find $w\in W$ such that $\|z-w\|\leq \e$, and we have
\[ \|z+ \lambda x\|\geq \|w+ \lambda x\|-\e\geq 1+ \p_\X(|\lambda|)-\e\geq (1-\e)\big(1+ \p_\X(|\lambda|)), \]
which proves that $\X$ is $(<\kappa)$-$\p_\X$-octahedral. 
\end{proof}

\begin{theorem}\label{thm: superreflexive and not} Let $\X$ and $\Y$ be normed spaces with $\dens(\Y)< \dens(\X)$ and $p\in [1,\infty)$. Then, for every infinite cardinal $\kappa$ such that $\dens(\Y)< \cf(\kappa)$ and $\kappa\leq \dens(\X)$, we have:
\begin{enumerate}
    \item\label{item: bound in superreflexive} If $\hat{\X}$ (the  completion of $\X$) is super-reflexive, then
	\begin{equation}\label{eq: gamma VS delta_X in UC}
	    \frac{1}{1-\delta_\X(1)}\leq \frac{2}{K(\X;\kappa)}\leq \gamma(\X\oplus_p \Y)\leq \gamma^*(\X\oplus_p \Y)\leq \frac{2}{1+\phi_p\circ\p_\X(1)}.
    \end{equation}
    In particular, if $\X$ is uniformly convex, $1< \gamma(\X\oplus_p \Y)\leq \gamma^*(\X\oplus_p \Y)< 2$.
    \item\label{item: bound} As a consequence (without assuming that $\hat{\X}$ is super-reflexive), we obtain:
    \begin{equation}\label{eq: gamma VS delta_X in general} 
        \frac{1}{1-\delta_\X(1)}\leq \gamma(\X\oplus_p \Y)\leq \gamma^*(\X\oplus_p \Y)\leq
        \frac{2}{1+\phi_p\circ\p_\X(1)}.
    \end{equation}
\end{enumerate}
In either clause, the lower bound also holds when $p=\infty$.
\end{theorem}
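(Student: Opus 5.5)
The plan is to reduce everything to the case where $\X$ is a uniformly convex Banach space, and then combine \Cref{thm: UC implies phi-octa}, \Cref{thm: phi-octa sum Y}, \Cref{thm: gamma of phi-octa} and \Cref{thm: subgroup theta}.

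\textbf{Reduction.} All quantities in \eqref{eq: gamma VS delta_X in UC} are invariant under completion: $\delta_\X$, $K(\X;\kappa)$, $\gamma$, and $\p_\X$ by \Cref{fact: varphi d_BM cont}. The only exception is $\gamma^*$, and there we only need an upper bound, so we construct the subgroup directly in $\X$. Hence we may assume $\X$ is complete wherever super-reflexivity is used.

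\textbf{Clause (i).} If $\delta_\X(1)=0$, then $\p_\X(1)$ may vanish and the outer bounds reduce to $1\leq\cdots\leq 2$. These follow from the known inequality $\gamma^*\leq 2$ together with \Cref{prop: CPZ gamma>=2/K}-type arguments, so the substance lies in the uniformly convex case.

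\emph{Lower bound, first inequality.} Take $x,y\in B_\X$ in an $r$-separated family of size $\kappa$ with $r>1$. The obvious route is to bound $K(\X;\kappa)\leq 2(1-\delta_\X(1))$. However, $1$-separation only gives midpoint information, not an estimate on $\|x-y\|$. A cleaner route is to rescale an $r$-separated family so that it becomes $1$-separated in $\tfrac1r B_\X$. If $x,y\in B_\X$ satisfy $\|x-y\|\geq r\geq1$, then the modulus only controls midpoints and we need an upper bound on $r$. I would instead use the duality-type estimate: from a $\kappa$-sized separated set, extract by reflexivity (weak compactness, and $\kappa\geq\omega$) two points $x,y$ with $\|x-y\|\geq r$ whose midpoint is nearly of norm as large as the weak limit allows. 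I expect the intended argument to be this standard Kottman-type estimate $K(\X)\leq 2(1-\delta_\X(1))$, proved by $1$-separation of $x/r,y/r$ in $\frac1r B_\X$ combined with a weakly convergent subnet. This is the step I regard as the main obstacle, and I would try it first via the weak-null difference trick: $x_\alpha-x\to0$ weakly, so $\liminf\|x_\alpha-x_\beta\|$ compares to $\|x_\alpha+x_\beta\|$ up to the limit.

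\emph{Lower bound, second inequality.} Apply \Cref{thm: UC implies phi-octa} with $\kappa$ replaced by the density of $\X$, passing down to the given $\kappa$. Note that $(<\dens\X)$-octahedrality implies $(<\kappa)$-octahedrality for $\kappa\leq\dens\X$. Then $\X$ is $(<\kappa)$-$\p_\X$-octahedral, with $\p_\X$ a positive modulus by \Cref{prop: varphi modulus}, and \Cref{thm: gamma of phi-octa} with $\dens\Y<\cf\kappa$ yields $\gamma(\X\oplus_p\Y)\geq 2/K(\X;\kappa)$ for all $p\in[1,\infty]$.

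\emph{Upper bound.} By \Cref{thm: phi-octa sum Y}, $\X\oplus_p\Y$ is $(<\dens\X)$-$(\phi_p\circ\p_\X)$-octahedral. Since $\dens(\X\oplus_p\Y)=\dens\X$, \Cref{prop: upper bound phi-octahedral} (via \Cref{thm: subgroup theta}) gives $\gamma^*\leq 2/(1+\phi_p(\p_\X(1)))$. When $\X$ is only super-reflexive and not uniformly convex, $\p_\X$ need not be positive, but then the bound is $2$, which is known. The ``in particular'' statement follows from $\delta_\X(1)>0$ and $\p_\X(1)>0$.

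\textbf{Clause (ii).} If $\hat\X$ is super-reflexive, this follows from (i). Otherwise $\hat\X$ is not uniformly non-square, so James' theorem forces $\delta_\X(1)=0$, making the lower bound $1$. Likewise $\p_\X(1)=0$ by \Cref{prop: varphi modulus}\eqref{item: characterization varphi} together with \eqref{item: non decreasing varphi}, which makes the upper bound $2$, and both bounds are trivial.
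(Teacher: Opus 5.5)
\textbf{There is a genuine gap: you only prove clause (i) for uniformly convex $\X$, and never reach $\X$ whose completion is merely super-reflexive.} Your treatment of the uniformly convex case is the paper's argument. For the lower bound you use \Cref{thm: UC implies phi-octa} at level $\dens\X$, restrict to $\kappa$, and apply \Cref{thm: gamma of phi-octa}. For the upper bound you use \Cref{thm: phi-octa sum Y} together with \Cref{prop: upper bound phi-octahedral}.

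The first inequality is just $K(\X;\kappa)\leq K(\X)\leq 2(1-\delta_\X(1))$. This is a theorem of Maluta and Papini, which the paper cites. Your weak-null sketch does not prove it, so you should cite it as well.

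Your reasons for skipping the non-uniformly-convex case are wrong:
\begin{enumerate}
\item The dichotomy ``$\delta_\X(1)=0$ or $\X$ uniformly convex'' is not exhaustive. For small $\eta>0$, the norm $\max\{\|x\|_2,|x_1|/(1-\eta)\}$ on $\ell_2$ has a flat face, so it is not uniformly convex, yet $\delta_\X(1)>0$.
\item For such $\X$ none of the bounds is trivial. First, $1/(1-\delta_\X(1))>1$. Second, by \Cref{prop: varphi modulus}\eqref{item: ineq1 varphi}, $\delta_\X(1)>0$ forces $\p_\X(1)>0$, so the upper bound is strictly below $2$. Here you conflate ``$\p_\X$ is not a positive modulus'' with ``$\p_\X(1)=0$''.
\item The middle bound $2/K(\X;\kappa)$ is not delivered by \Cref{prop: CPZ gamma>=2/K}, which only gives the weaker $2/\max\{K(\X),K(\Y)\}$. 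This remains to be proved even when $\delta_\X(1)=0$.
\item You also cannot apply \Cref{thm: UC implies phi-octa} and \Cref{thm: gamma of phi-octa} directly, since both need $\p_\X$ to be a positive modulus.
\end{enumerate}

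\textbf{The missing idea is approximation.} Since $\hat\X$ is super-reflexive, uniformly convex equivalent norms are dense among all equivalent norms: take an Enflo uniformly convex norm $|\cdot|$ and form $(\|x\|^2+\e|x|^2)^{1/2}$. Prove \eqref{eq: gamma VS delta_X in UC} for these renormings and pass to the limit. For this, use three facts:
\begin{itemize}
\item a $(1+\e)$-renorming of $\X$ induces a $(1+\e)$-renorming of $\X\oplus_p\Y$;
\item $K(\cdot;\kappa)$, $\gamma$, $\gamma^*$ and $\X\mapsto\p_\X(1)$ are continuous in the Banach--Mazur distance (\Cref{fact: K d_BM cont}, \Cref{fact: gamma and d_BM}, \Cref{fact: varphi d_BM cont}); the last fact is in the paper precisely for this step;
\item $\phi_p$ is continuous.
\end{itemize}

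\textbf{Clause (ii)} then follows as you say, except for one step. \Cref{prop: varphi modulus}\eqref{item: characterization varphi} and \eqref{item: non decreasing varphi} only show that $\p_\X$ vanishes at some $s>0$. Sub-homogeneity propagates zeros downwards, not up to $t=1$. Use \eqref{item: ineq2 varphi} instead. If $\hat\X$ is not super-reflexive, then $\delta_\X\equiv 0$ on $[0,2)$, so $\p_\X(t/2)\leq\delta_\X(t)=0$ for all $t<2$. Hence $\p_\X(1)=0$ by continuity. Equivalently, as in the paper, $\p_\X(t)>0$ for some $t<1$ forces $\delta_\X(2t)>0$.
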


\begin{proof} To prove \eqref{item: bound in superreflexive}, suppose first that $\X$ is uniformly convex and let $\kappa_0\geq \kappa$ be the density character of $\X$. Then, $\X$ is $(<\kappa_0)$-$\p_\X$-octahedral, by \Cref{thm: UC implies phi-octa}. Hence, it is $(<\kappa)$-$\p_\X$-octahedral as well, and the second inequality in \eqref{eq: gamma VS delta_X in UC} follows directly from \Cref{thm: gamma of phi-octa}. Further, \Cref{thm: phi-octa sum Y} implies that $\X\oplus_p \Y$ is $(<\kappa_0)$-$(\phi_p\circ \p_\X)$-octahedral, thus the last inequality in \eqref{eq: gamma VS delta_X in UC} is a consequence of \Cref{prop: upper bound phi-octahedral}. Therefore, for uniformly convex $\X$, we obtained
\[ \frac{2}{K(\X;\kappa)}\leq \gamma(\X\oplus_p \Y)\leq \gamma^*(\X\oplus_p \Y)\leq \frac{2}{1+\phi_p\circ\p_\X(1)}. \]

For the general case, if $\hat{\X}$ is a super-reflexive space, the set of uniformly convex norms on $\X$ is dense in the set of all equivalent norms (because this is true in $\hat{\X}$). This and the continuity of $K(\cdot;\kappa)$, $\gamma(\cdot)$, and $\X\mapsto \p_\X(1)$ with respect to the Banach-Mazur distance (by \Cref{fact: K d_BM cont}, \Cref{fact: gamma and d_BM}, and \Cref{fact: varphi d_BM cont} respectively) imply that the above inequalities are true for all  spaces with super-reflexive completion as well. To complete the proof of \eqref{item: bound in superreflexive}, it is just sufficient to observe that $K(\X;\kappa)\leq K(\X)\leq 2(1-\delta_\X(1))$, where the second inequality is due to Maluta and Papini, \cite{MalutaPapini}*{Theorem~2.6}. 

We now prove \eqref{item: bound}, and we begin with the first inequality in \eqref{eq: gamma VS delta_X in general}. If $\delta_\X(1)=0$, then the inequality is trivially valid. So, we can assume that $\delta_{\hat{\X}}(1)=\delta_\X(1)>0$ and hence that $\hat{\X}$ is uniformly non-square, in particular super-reflexive, \cite{James}. Therefore, the inequality follows from \eqref{item: bound in superreflexive}. The last inequality in \eqref{eq: gamma VS delta_X in general} is proved similarly. If $\p_\X(1)= 0$, it reduces to the fact that $\gamma^*(\X) \leq2$ for all spaces. Hence, we can assume that $\p_\X(1)>0$; thus, by continuity there exists $t\in (0,1)$ such that $\p_\X(t)>0$. We claim that $\delta_\X (2t)>0$. In fact, if $\delta_\X(2t)$ were null, \Cref{prop: varphi modulus}\eqref{item: ineq2 varphi} would give $\p_\X(t)\leq \delta_\X(2t) =0$, which is a contradiction. Finally, since $\delta_{\hat{\X}}(2t)= \delta_\X(2t) >0$, $\hat{\X}$ is super-reflexive, and the inequality follows from \eqref{item: bound in superreflexive}. 
\end{proof}

In the above theorem it is also possible to give an upper bound for $\gamma^*(\X)$ that involves the modulus of convexity $\delta_\X$, instead of $\p_\X$. Let $t_\X\coloneqq \sup\{t\in[0,2)\colon \delta_\X(t)\leq 1-t \}$ and observe that, since $\delta_\X$ is non-decreasing and continuous on $[0,2)$, $t_\X$ is the unique $t\in[0,1]$ such that $\delta_\X(t)= 1-t$. Plainly, if $\delta_\X(1)> 1$ (\emph{e.g.}, if $\X$ is uniformly convex) then $t_\X<1$. Moreover, by Nordlander's inequality, we get
\[ 1-t_\X= \delta_\X(t_\X)\leq 1-\sqrt{1- \frac{{t_\X}^2}{4}}, \]
which yields $t_\X\geq \frac{2}{\sqrt{5}}$.

\begin{corollary} For every infinite-dimensional normed space $\X$, we have
\[ \gamma^*(\X)\leq 2t_\X\leq 2\left(1-\delta_\X \left(\frac{2}{\sqrt{5}}\right) \right). \]
\end{corollary}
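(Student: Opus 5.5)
The plan is to apply \Cref{thm: subgroup theta} with $\theta$ arbitrarily close to $2(1-t_\X)^{-1}\cdot$(something) — more precisely, to verify its hypothesis with $\theta= 1/t$ for every $t>t_\X$ (with $t<1$), which gives $\gamma^*(\X)\leq 2t$ and hence, letting $t\to t_\X^+$, $\gamma^*(\X)\leq 2t_\X$. If $t_\X=1$ there is nothing to prove beyond $\gamma^*(\X)\leq 2$ from \cite{DRShilbert}, so we assume $t_\X<1$; then, by the argument of \Cref{thm: superreflexive and not}, $\delta_\X(t)>0$ for such $t$, so the completion is super-reflexive, and by density of uniformly convex renormings together with \Cref{fact: gamma and d_BM} (continuity of $\gamma^*$ and of $\delta_\X(t)$ in the Banach--Mazur distance, after a small adjustment of $t$) we may assume $\X$ is uniformly convex and in particular reflexive.

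For the hypothesis of \Cref{thm: subgroup theta}: given a subspace $\Z$ with $\dens(\Z)<\kappa=\dens(\X)$, exactly as in the proof of \Cref{thm: UC implies phi-octa} we pick a dense $W\subseteq S_\Z$ of size $<\kappa$, norming functionals $f_w\in\J_\X(w)$, and a unit vector $x\in\bigcap_w\ker f_w$ (possible since $w^*$-$\dens(\X^*)=\kappa$). The key estimate is then: for $z\in S_\Z$ (approximated by $w$), $x$ is Birkhoff--James orthogonal-like with $\|w+\lambda x\|\geq 1$ for all $\lambda$, and we need $\|x-w\|\geq 1/t\cdot$(normalised), i.e.\ $\dist(x,S_\Z)\geq \theta$. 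Rescale: consider $u=tx$ and $w$; I would show $\|w-x\|\geq 1/t$ is not quite the right quantity, so instead I aim for $\|x-w\|$ large via convexity: since $f_w(w)=1$ and $f_w(x)=0$, the midpoint satisfies $f_w\bigl(\tfrac{w+(w-x)}{2}\bigr)=1$, so $\|w-\tfrac{x}{2}\|\geq1$. Apply the modulus of convexity to the pair $w$ and $w-x$ scaled by $r=\|w-x\|$: if $r$ were small, $w$ and $w-x$ lie in $rB_\X$ (with $r\geq1$), are at distance $1$, so $1\leq \|w-x/2\|\leq r(1-\delta_\X(1/r))$. Writing $s=1/r$, this reads $s\leq 1-\delta_\X(s)$, i.e.\ $\delta_\X(s)\leq 1-s$, forcing $s\leq t_\X$, i.e.\ $\|w-x\|\geq 1/t_\X$. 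Passing from $w$ to $z$ costs $\e$, giving $\dist(x,S_\Z)\geq 1/t_\X-\e$, and \Cref{thm: subgroup theta} yields $\gamma^*(\X)\leq 2/(1/t_\X-\e)\to 2t_\X$.

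The second inequality is elementary: since $t_\X\geq 2/\sqrt5$ and $\delta_\X$ is non-decreasing, $1-t_\X=\delta_\X(t_\X)\geq\delta_\X(2/\sqrt5)$, so $2t_\X\leq 2(1-\delta_\X(2/\sqrt5))$. The main obstacle I expect is the reduction to the uniformly convex (reflexive) case, namely checking that $t_\X$ behaves continuously enough under small renormings; if that is delicate, I would instead run the argument directly in the completion using Bishop--Phelps--Bollob\'as as in \Cref{fact: varphi d_BM cont}, or avoid functionals entirely by choosing $x$ via the weak$^*$-density cardinality argument in $\hat\X^*$.
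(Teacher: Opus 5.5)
Your argument is correct, and it is the paper's proof with the intermediate modulus $\p_\X$ eliminated. The paper gets the corollary in two lines by combining $\gamma^*(\X)\leq 2/(1+\p_\X(1))$ from \Cref{thm: superreflexive and not} with $1/(1+\p_\X(1))\leq t_\X$ from \Cref{prop: varphi modulus}\eqref{item: ineq1 varphi} at $t=1$. Unwinding those two results gives exactly your steps:
\begin{itemize}
\item your kernel vector $x$ is the one built in \Cref{thm: UC implies phi-octa};
\item by definition of $\p_\X$ it satisfies $\|w-x\|\geq 1+\p_\X(1)$;
\item your midpoint estimate for the pair $w$, $w-x$ is the argument that proves \eqref{item: ineq1 varphi};
\item the final step is \Cref{thm: subgroup theta}, applied as in \Cref{prop: upper bound phi-octahedral}.
\end{itemize}

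Bypassing $\p_\X$ buys something. Your estimate $\|w-x\|\geq 1/t_\X$ uses only the definition of $\delta_\X$, not uniform convexity. So you can drop the renorming step you flag as the main obstacle. If $t_\X<1$, then $\delta_\X(1)>0$, so $\hat{\X}$ is uniformly non-square and hence reflexive. Reflexivity is all the kernel-intersection step needs: choose the vector in $\hat{\X}$ and perturb it into $S_\X$. In the separable case you don't even need that, since finitely many kernels always meet nontrivially in an infinite-dimensional space.

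If you do keep the renorming, the semicontinuity you need is elementary. For $t\in(t_\X,1)$ one has $\delta_\X(t)>1-t$. If $B_\X\subseteq B_\Y\subseteq(1+\e)B_\X$, then $\delta_\Y(t)\geq\delta_\X(t/(1+\e))-\e$, so $t_\Y\leq t$ for small $\e$. This replaces the Bishop--Phelps--Bollob\'as argument of \Cref{fact: varphi d_BM cont}.

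The paper's route, in turn, reuses machinery already in place and records the sharper intermediate bound $2/(1+\p_\X(1))$.

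Please remove the false starts from your write-up: the opening guess for $\theta$ and the aside about $u=tx$. The choice you actually use is $\theta=1/t_\X-\e$.
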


However, this bound is in general weaker than the one from \Cref{thm: superreflexive and not}. For example, if $\mathcal{H}$ is a Hilbert space, then
\[ \frac{1}{1+\p_\mathcal{H}(1)}= \frac{1}{\sqrt{2}}< \frac{2}{\sqrt{5}}= t_\mathcal{H}. \]

\begin{proof} \Cref{prop: varphi modulus}\eqref{item: ineq1 varphi} yields
\[ \delta_\X\left(\frac{1}{1+\p_\X(1)}\right)\leq \frac{\p_\X(1)}{1+ \p_\X(1)}= 1-\frac{1}{1+\p_\X(1)}; \]
hence, by the definition of $t_\X$, we conclude that $\frac{1}{1+\p_\X(1)}\leq t_\X$. \Cref{thm: superreflexive and not} then gives
\[ \gamma^*(\X)\leq\frac{2}{1+\p_\X(1)}\leq 2 t_\X= 2(1-\delta_\X(t_\X))\leq 2\left(1-\delta_\X \left(\frac{2}{\sqrt{5}}\right) \right). \qedhere \]
\end{proof}

While \Cref{thm: superreflexive and not}\eqref{item: bound in superreflexive} implies that each uniformly convex space $\X$ satisfies $\gamma(\X)>1$, it doesn't allow to deduce the same assertion for general super-reflexive spaces, as there are super-reflexive spaces $\X$ with $\delta_\X(1)=0$. Actually, by the main result in \cite{DRShilbert}, if $\kappa^\omega= \kappa$ there is a Banach space $\X$, isomorphic to $\ell_2(\kappa)$, that admits a lattice tiling by balls; hence $\gamma^*(\X)=1$. We now show that a variation of the argument even gives a separable example.

\begin{proposition}\label{prop: renorm ell_2} For every infinite cardinal $\kappa$ there is an equivalent norm $\nn\cdot$ on $\ell_2(\kappa)$ such that $\gamma^*((\ell_2(\kappa),\nn\cdot))= 1$. In particular, there are (infinite-dimensional) separable super-reflexive Banach spaces with $\gamma^*(\X)= 1$.
\end{proposition}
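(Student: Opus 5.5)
The plan is to define the new norm via a quotient onto $\ell_\infty$-type coordinates, which would give a lattice tiling outright, and to fall back on \Cref{thm: subgroup theta} if that fails. Two obvious routes are closed. First, octahedrality cannot help, since octahedral spaces contain $\ell_1$ and our renorming must stay isomorphic to $\ell_2(\kappa)$. Second, the condition of \Cref{thm: subgroup theta} with $\theta$ close to $2$ cannot hold for every small subspace of a reflexive space: it would force an $\ell_1$-like direction orthogonal to every such subspace. So the aim is a norm $\nn\cdot$ on $\ell_2(\kappa)$ that looks like $\ell_\infty$ ``at the scale of the lattice'' and still looks like $\ell_2$ at large scales, where the equivalence of norms is decided.

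\textbf{Step 1 (naive attempt and why it fails).} Consider first
\[ \nn{x}\coloneqq \max\Big\{\e\|x\|_2,\ \sup_{\alpha<\kappa}|x_\alpha|\Big\}, \]
an equivalent norm, together with the subgroup $\D\coloneqq (2\ZZ)^{(\kappa)}$ of finitely supported vectors with even integer coordinates. Separation is immediate: every nonzero $d\in\D$ has a coordinate of modulus at least $2$, so $\nn{d}\geq 2$. Density, however, fails. Rounding each coordinate of $x$ to the nearest even integer gives a coordinatewise error of at most $1$, but the error vector has $\ell_2$-norm comparable to $\|x\|_2$, so the term $\e\|x-d\|_2$ is unbounded.

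\textbf{Step 2 (quotient construction).} The remedy is to let the $\ell_2$ part of the norm see only a quotient that $\D$ fills densely. Fix an isomorphism $\ell_2(\kappa)\cong \ell_2(\kappa)\oplus\ell_2(\kappa)$ and a bounded operator $S$. Then define
\[ \nn{(y,z)}\coloneqq \max\big\{\|z\|_2,\ \sup_\alpha|y_\alpha|,\ \e\|y-Sz\|_2\big\}, \]
and let the subgroup be $\{(d,z_d)\}$, with the $z$-coordinates chosen by the transfinite procedure from the proof of \Cref{thm: subgroup theta}, so that the residual $y-Sz$ can always be absorbed. At stage $\beta$ one adjoins a generator close to $u_\beta$, with the new direction taken from coordinates not yet used. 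Separation of the new elements $d+k(u_\beta-x_\beta)$ then comes from the $\sup_\alpha|y_\alpha|$ term, exactly as in the two cases $k=1$ and $k\geq 2$ of that proof. The $\ell_2$ term is only required to stay bounded by $1+\e$.

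\textbf{Main obstacle.} The hard step is achieving $(1+\e)$-density, and not just density $2$, while keeping the norm equivalent to $\|\cdot\|_2$. Any equivalence constant is allowed, but it must be uniform in $\kappa$. My first attempt would be to tune the parameters and $S$ so that, at every stage of the induction, the vector $u_\beta$ has an approximant whose coordinatewise error is at most $1$ and whose $\ell_2$ error lies in a fixed ball of radius $1/\e$. This is the place where the argument of \cite{DRShilbert}, valid when $\kappa^\omega=\kappa$, would need to be adapted, and where I expect the real difficulty to lie.

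Once such a norm exists for every infinite $\kappa$, the case $\kappa=\omega$ gives the second claim. Indeed, $(\ell_2,\nn\cdot)$ is separable and isomorphic to $\ell_2$, hence super-reflexive, and it satisfies $\gamma^*=1$.
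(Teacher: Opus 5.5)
There is a genuine gap. The proposal never produces the norm, and the step you defer as the ``main obstacle'' is the entire content of the statement: you need a subgroup that is $2$-separated and $(1+\eta)$-dense for every $\eta>0$, in a norm equivalent to $\|\cdot\|_2$. Moreover, neither mechanism sketched in Step 2 can work.

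\emph{Even-integer first components.} Suppose the first components of the subgroup range over $(2\ZZ)^{(\kappa)}$, as the notation $\{(d,z_d)\}$ indicates. Then the obstruction of Step 1 survives every choice of $S$ and of the $z_d$. Take $y=\frac12\sum_{\alpha\in F}e_\alpha$. Any lattice point with $\nn{(y,0)-(d,z_d)}\leq\frac54$ must have $d_\alpha=0$ for $\alpha\in F$ and $\|z_d\|_2\leq\frac54$. Hence $\e\|(y-d)+Sz_d\|_2\geq\e\bigl(\frac12|F|^{1/2}-\frac54\|S\|\bigr)$, which exceeds $\frac54$ once $|F|$ is large.

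\emph{Adjoining generators as in \Cref{thm: subgroup theta}.} If instead you adjoin generators $u_\beta-x_\beta$, the separation obtained is exactly the constant $\theta$ in the hypothesis $\dist(x_\beta,S_\Z)\geq\theta$. Getting $\gamma^*=1$ this way requires $\theta\to2$, which is essentially octahedrality, and you yourself excluded that for a space isomorphic to $\ell_2(\kappa)$. Concretely, a fresh coordinate $x_\beta=(e_\gamma,0)$ only guarantees $\nn{d+u_\beta-x_\beta}\geq1$.

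\emph{The cardinal hypothesis.} The input you need from \cite{DRShilbert} is not restricted to $\kappa^\omega=\kappa$; that hypothesis only concerns lattice tilings. Corollary~3.4 there gives, for every infinite $\kappa$, a subgroup $\D\subseteq\ell_2(\kappa)$ that is $\sqrt2$-separated with $\dist(x,\D)\leq1$ for all $x$.

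The missing idea is to let this subgroup define the norm, via its Voronoi cell $V_0\coloneqq\{x\colon \|x\|_2\leq\|x-d\|_2 \text{ for all } d\in\D\}$.
\begin{itemize}
\item \emph{Equivalent norm.} $V_0$ is convex and symmetric, with $\frac{\sqrt2}{2}B_{\ell_2(\kappa)}\subseteq V_0\subseteq B_{\ell_2(\kappa)}$. The first inclusion comes from $\sqrt2$-separation, the second from $\dist(x,\D)\leq1$. So its Minkowski functional $\nn\cdot$ is an equivalent norm.
\item \emph{Packing.} Since $\D$ is a group, $d+V_0$ is the Voronoi cell of $d$. The cells are non-overlapping, because bisecting hyperplanes separate them, so $\D$ is $2$-separated for $\nn\cdot$.
\item \emph{Covering.} Suppose $\|x-d\|_2=r<1$ for some $d\in\D$. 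Then $\D\cap B(x,r)$ is finite, since an infinite $\sqrt2$-separated subset of a ball of radius $r$ would force $K(\ell_2(\kappa))\geq\sqrt2/r>\sqrt2$. Hence $\dist(x,\D)$ is attained at some $h\in\D$, and $x\in h+V_0$.
\item \emph{Conclusion.} Points with $\dist(x,\D)<1$ are dense, since one can move $x$ slightly towards a nearly closest point of $\D$. So $\bigcup_{d\in\D}(d+V_0)$ is dense, and $\D$ is $(1+\eta)$-dense for $\nn\cdot$ for every $\eta>0$. That is, $\gamma^*((\ell_2(\kappa),\nn\cdot))=1$.
\end{itemize}
No transfinite induction or $\ell_\infty$-type term is needed. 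Your deduction of the separable super-reflexive case from $\kappa=\omega$ then goes through.
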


\begin{proof} According to \cite{DRShilbert}*{Corollary 3.4}, $\ell_2(\kappa)$ contains a subgroup $\D$ that is $\sqrt{2}$-separated and such that $\dist(x,\D)\leq 1$ for all $x\in \ell_2(\kappa)$. Consider the Voronoi cells $\{V_d\}_{d\in \D}$ associated to $\D$. By the argument in \cite{DRShilbert}*{Proposition 2.3}, $V_0$ is a bounded, symmetric convex body and $V_d= d+ V_0$ for all $d\in \D$. Further, the Voronoi cells are non-overlapping, because, for each $d\in \D$, the hyperplane
\[ \left\{x\in \ell_2(\kappa)\colon \langle x,d\rangle= \tfrac{1}{2}\|d\|^2\right\} \]
separates $V_0$ from $V_d$. Consequently, if we let $\nn\cdot$ be the equivalent norm on $\ell_2(\kappa)$ whose unit ball is $V_0$, we see that $\{V_d\}_{d\in \D}$ is a packing by unit balls in $(\ell_2(\kappa), \nn\cdot)$. Finally, we show that $\bigcup_{d\in \D}V_d$ is dense in $\ell_2(\kappa)$, which plainly yields $\gamma^*((\ell_2(\kappa),\nn\cdot))= 1$.

For this, take any $x\in \ell_2(\kappa)$ such that $\dist(x, \D)< 1$ and pick $d\in \D$ such that $r\coloneqq \|x- d\| <1$. Thus the set $\D\cap B(x,r)$ is non-empty and $\sqrt{2}$-separated. As $K(\ell_2(\kappa))= \sqrt{2}$ and $r<1$, we deduce that $\D\cap B(x,r)$ is a finite set. Therefore, there exists $h\in \D$ that minimises the distance of $x$ to $\D$, whence $x\in V_h$. Finally, since $\dist(x,\D)\leq 1$ for all $x\in \ell_2(\kappa)$, we see that the set of $x\in \ell_2(\kappa)$ with $\dist(x,\D)< 1$ is dense in $\ell_2(\kappa)$. A fortiori, $\bigcup_{d\in \D}V_d$ is dense in $\ell_2(\kappa)$, as desired.
\end{proof}

\subsection{Lebesgue spaces}\label{sec: L_p}
We now specialise the results from this section to compute $\gamma$ and $\gamma^*$ for the $\ell_p(\kappa)$ and the $L_p(\mu)$ spaces and some direct sums thereof. As a consequence, we generalise Swanepoel's result \cite{Swanepoel} that $\gamma(\ell_p)= \frac{2}{2^{1/p}}$ and the result from \cite{DRShilbert}*{Corollary 3.4} that $\gamma^*(\ell_p(\kappa))= 2/2^{1/p}$; we also obtain more counterexamples to \Cref{probl: gamma=2/k}. The lower bounds follow directly from \Cref{thm: superreflexive and not}, while the upper bounds require the modulus $\phi_p$ and \Cref{prop: p-octa sum Y}. We begin with the purely atomic case.

\begin{theorem}\label{thm: gamma ell_p} Let $p\in[1,\infty)$, $\kappa$ be an infinite cardinal, and $\Y$ a normed space with $\dens(\Y)< \kappa$. Then:
\begin{enumerate}
    \item\label{item: ell_p small r} For $1\leq r\leq p$
    \[ \gamma(\ell_p(\kappa)\oplus_r \Y)= \gamma^*(\ell_p(\kappa)\oplus_r \Y)= \frac{2}{2^{1/p}}. \]
    \item\label{item: ell_p large r} For $p\leq r<\infty$
    \[ \frac{2}{2^{1/p}}\leq \gamma(\ell_p(\kappa) \oplus_r \Y)\leq \gamma^*(\ell_p(\kappa) \oplus_r \Y)\leq \frac{2}{2^{1/r}}. \]
    \item\label{item: ell_p infty r} For $r=\infty$
    \[ \frac{2}{2^{1/p}}\leq \gamma(\ell_p(\kappa) \oplus_\infty \Y)\leq \gamma^*(\ell_p(\kappa) \oplus_\infty \Y)\leq \max\left\{ \frac{2}{2^{1/p}}, \gamma^*(\Y)\right\}. \]
\end{enumerate}
\end{theorem}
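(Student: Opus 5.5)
The plan is to derive all three clauses from the lower bound of \Cref{thm: gamma of phi-octa} and the upper bound of \Cref{thm: subgroup theta}, in the form packaged by \Cref{prop: upper bound phi-octahedral}.

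\emph{Lower bound (all clauses).} By \Cref{lemma: ell_p octa}, $\ell_p(\kappa)$ is $(<\kappa)$-$\phi_p$-octahedral. To apply \Cref{thm: gamma of phi-octa} we need $\dens(\Y)<\cf(\kappa)$, whereas the hypothesis only gives $\dens(\Y)<\kappa$. For regular $\kappa$ nothing more is needed. For singular $\kappa$, choose a regular cardinal $\lambda$ with $\dens(\Y)<\lambda\leq\kappa$ (for instance the successor of $\dens(\Y)$, which is regular and $\leq\kappa$). Since a $(<\kappa)$-octahedral space is also $(<\lambda)$-octahedral, $\ell_p(\kappa)$ is $(<\lambda)$-$\phi_p$-octahedral, and \Cref{thm: gamma of phi-octa} applied with $\lambda$ in place of $\kappa$ gives
\[ \gamma(\ell_p(\kappa)\oplus_r\Y)\geq \frac{2}{K(\ell_p(\kappa);\lambda)} \]
for every $r\in[1,\infty]$. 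Since $K(\ell_p(\kappa);\lambda)\leq K(\ell_p(\kappa))=2^{1/p}$, this is the claimed lower bound $2/2^{1/p}$ in all three clauses.

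\emph{Upper bound for $1\leq r\leq p$.} By \Cref{prop: p-octa sum Y}, $\ell_p(\kappa)\oplus_r\Y$ is $(<\kappa)$-$\phi_p$-octahedral. Its density is exactly $\kappa$, because $\dens(\Y)<\kappa$. \Cref{prop: upper bound phi-octahedral} then gives
\[ \gamma^*\leq \frac{2}{1+\phi_p(1)}=\frac{2}{2^{1/p}}. \]
Together with the lower bound, this proves clause \eqref{item: ell_p small r}.

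\emph{Upper bound for $p\leq r<\infty$.} Here $\phi_r\leq\phi_p$ pointwise, since $(1+t^r)^{1/r}\leq(1+t^p)^{1/p}$. Hence $\ell_p(\kappa)$ is also $(<\kappa)$-$\phi_r$-octahedral. Applying \Cref{prop: p-octa sum Y} with exponent $r$ (both parameters equal to $r$) shows that the sum is $(<\kappa)$-$\phi_r$-octahedral. \Cref{prop: upper bound phi-octahedral} then yields the bound $2/2^{1/r}$, which is clause \eqref{item: ell_p large r}.

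\emph{Upper bound for $r=\infty$.} This follows at once from \Cref{fact: gamma and d_BM}, which gives $\gamma^*(\X\oplus_\infty\Y)\leq\max\{\gamma^*(\X),\gamma^*(\Y)\}$, combined with $\gamma^*(\ell_p(\kappa))=2/2^{1/p}$ from clause \eqref{item: ell_p small r} with $\Y=\{0\}$.

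The only delicate step is the cofinality issue in the lower bound, namely checking that passing to a regular $\lambda\leq\kappa$ loses nothing. It loses nothing because the estimate only uses $K(\ell_p(\kappa);\lambda)\leq 2^{1/p}$.
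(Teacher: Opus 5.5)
Your proof is correct. The upper bounds are obtained exactly as in the paper, via \Cref{lemma: ell_p octa}, \Cref{prop: p-octa sum Y}, \Cref{prop: upper bound phi-octahedral}, and \Cref{fact: gamma and d_BM} for $r=\infty$. The passage to a regular cardinal $\lambda$ with $\dens(\Y)<\lambda\leq\kappa$ is also exactly what the paper does.

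The difference lies in the lower bound. The paper invokes \Cref{thm: superreflexive and not}: for $p>1$ it uses that $\ell_p(\kappa)$ is uniformly convex, hence $(<\kappa)$-$\p_{\ell_p(\kappa)}$-octahedral by \Cref{thm: UC implies phi-octa}, and it disposes of $p=1$ separately as trivial. You instead feed the $(<\kappa)$-$\phi_p$-octahedrality from \Cref{lemma: ell_p octa} directly into \Cref{thm: gamma of phi-octa}. The estimate $2/K(\X;\lambda)$ there does not depend on which positive modulus witnesses octahedrality, so both choices give the same bound.

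Your route is more economical. A single structural property drives both the lower and the upper bounds. It needs neither uniform convexity nor the tangential modulus, and it covers $p=1$ uniformly. The paper's route simply reuses a general result it had already proved.

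One minor point, which the paper shares: if $\Y$ is finite-dimensional, the successor of $\dens(\Y)$ is a finite cardinal. In that case take $\lambda=\omega$, since \Cref{thm: gamma of phi-octa} is stated for infinite cardinals.
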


\begin{proof} We begin by proving the three lower bounds for $\gamma(\ell_p(\kappa) \oplus_r \Y)$. Since $\dens(\Y)< \kappa$, there is a regular cardinal $\kappa_0$ such that $\dens(\Y)< \kappa_0\leq \kappa$ (to wit, one might pick the successor of $\dens(\Y)$). Further, $K(\ell_p(\kappa); \kappa_0)\leq K(\ell_p(\kappa))= 2^{1/p}$. When $p>1$, $\ell_p(\kappa)$ is uniformly convex, thus \Cref{thm: superreflexive and not} implies
\[ \frac{2}{2^{1/p}}\leq \gamma(\ell_p(\kappa) \oplus_r \Y) \qquad \mbox{for all }r\in [1,\infty]. \]
This inequality is also trivially true when $p=1$, thereby proving the lower bounds.

Now, to the upper bounds, where we use the fact that $\ell_p(\kappa)$ is $(<\kappa)$-$\phi_p$-octahedral, by \Cref{lemma: ell_p octa}. If $r\leq p$, \Cref{prop: p-octa sum Y} shows that $\ell_p(\kappa)\oplus_r \Y$ is $(<\kappa)$-$\phi_p$-octahedral; thus \Cref{prop: upper bound phi-octahedral} yields $\gamma^*(\ell_p(\kappa)\oplus_r \Y)\leq \frac{2}{2^{1/p}}$, and proves \eqref{item: ell_p small r}. Likewise, if $p\leq r< \infty$, $\ell_p(\kappa)$ is $(<\kappa)$-$\phi_r$-octahedral, whence $\ell_p(\kappa) \oplus_r \Y$ is $(<\kappa)$-$\phi_r$-octahedral as well, again by \Cref{prop: p-octa sum Y}. Thus, \Cref{prop: upper bound phi-octahedral} also proves the upper bound in \eqref{item: ell_p large r}. Finally, the upper bound in \eqref{item: ell_p infty r} is just \Cref{fact: gamma and d_BM} (where $\gamma^*(\ell_p(\kappa))= \frac{2}{2^{1/p}}$ by \eqref{item: ell_p small r}).
\end{proof}

We now explicitly distil a particular case of the previous theorem, which leads us to one more counterexample to \Cref{probl: gamma=2/k}; further, it yields an example of octahedral Banach space $\X$ with $\gamma(\X)>1$, which compares to \Cref{thm: octahedral}.
\begin{example}\label{ex: octa gamma>1} Consider the Banach space $\X= \ell_1\oplus_1 \ell_p(\omega_1)$, for $p>1$. Then, $\gamma(\X)= \gamma^*(\X)= \frac{2}{2^{1/p}}$, while $K(\X)=2$. Comparing to the examples in \Cref{sec: packing LUR}, in this example we can compute exactly both $\gamma(\X)$ and $K(\X)$, instead of merely estimating them. Moreover, the Banach space $\X$ is octahedral, yielding an example of a non-separable octahedral Banach space with $\gamma(\X)>1$ (and actually, $\gamma(\X)$ can be chosen as close to $2$ as we wish by taking sufficiently large $p$). 
\end{example}

Next, we move to the function spaces. The final result is weaker than the previous one, because $K(L_p(\mu))= \max\{2^{1/p}, 2^{1/q}\}$ if $\mu$ is not purely atomic, \cite{WellWill}*{Theorem 16.9} (and where $q$ is the conjugate index to $p$). On the other hand, the proof is essentially identical, the only difference being that the proof that $L_p(\mu)$ is $(<\kappa)$-$\phi_p$-octahedral is more complicated.

\begin{proposition}\label{prop: L_p p-octa} For every measure space $(\M,\Sigma, \mu)$ and $p\in [1,\infty)$, the space $L_p(\mu)$ is $(<\kappa)$-$\phi_p$-octahedral, where $\kappa$ is the density of $L_p(\mu)$.
\end{proposition}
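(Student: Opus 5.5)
The plan is to use the reformulation in \Cref{ex: phi_p}. Fix $\e>0$ and a subspace $\Z$ of $L_p(\mu)$ with $\dens(\Z)<\kappa$. It suffices to find a norm-one $x$ such that $\|z+x\|^p\geq(1-\e)^p(\|z\|^p+1)$ for all $z\in\Z$. As in \eqref{eq: p-octa for ||z|| large}, the case $\|z\|\geq M$ is automatic for a suitable $M=M(\e)$, so only $\|z\|\leq M$ needs attention. The mechanism will be a tall, thin bump. Take $x=c\,\bone_D$ with $c^p\mu(D)=1$, and suppose $\int_D|z|^p\leq\eta$ for every relevant $z$. Splitting the integral over $D$ and $D^c$ and using Minkowski on $D$ gives
\[ \|z+x\|^p\geq \|z\|^p-\eta+\bigl(1-\eta^{1/p}\bigr)^p, \]
which is at least $(1-\e)^p(\|z\|^p+1)$ once $\eta$ is small in terms of $\e$ and $M$. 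So everything reduces to finding $D$ on which all $z\in M B_\Z$ have uniformly small $p$-mass.

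\emph{Case $\kappa=\omega$.} Here $\Z$ is finite dimensional, and I take a finite $\e$-net $F$ of $MB_\Z$. If $\mu$ has infinitely many atoms, I argue exactly as in \Cref{lemma: ell_p octa}, taking $D$ to be an atom far out on which every $z\in F$ is small. Otherwise $\mu$ has a non-atomic part (this is where $L_p(\mu)$ being infinite dimensional is used). I then take $D$ of very small measure inside it; absolute continuity of the finitely many integrals $\int|z|^p$, $z\in F$, gives $\int_D|z|^p\leq\eta$ on $F$, and hence on all of $MB_\Z$ up to $\e$.

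\emph{Case $\kappa$ uncountable.} First I decompose $L_p(\mu)\cong\bigl(\bigoplus_{i\in I}L_p(\mu_i)\bigr)_{\ell_p}$ with each $\mu_i$ finite, via a maximal disjoint family of sets of finite positive measure. Every element of $\Z$ lives on countably many pieces, so $\Z$ lives on a set $I_0$ of fewer than $\kappa$ pieces. If $I\setminus I_0\neq\emptyset$, I take $x$ normalised on an unused piece and obtain exact $\ell_p$-orthogonality, as in the lemma. Otherwise, $\Z\subseteq L_p(\Sigma_0)$ for a sub-$\sigma$-algebra $\Sigma_0$ generated by fewer than $\kappa$ sets, while $\Sigma$ has Maharam density $\kappa$. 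I then look for a finite-measure set $A$ and $D\subseteq A$ whose conditional expectation on $\Sigma_0$ is $E(\bone_D\mid\Sigma_0)=\delta\bone_A$, i.e.\ $D$ is independent of $\Sigma_0$ on $A$ with conditional measure $\delta$. For every $z\in\Z$ this gives $\int_D|z|^p=\delta\int_A|z|^p\leq\delta M^p$. This bound is uniform with no compactness needed, and choosing $\delta$ small finishes the estimate above.

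The main obstacle is producing such an independent set $D$. I would invoke Maharam's theorem. Since $\Sigma_0$ has fewer than $\kappa$ generators, some homogeneous piece $A$ of the measure algebra has Maharam type strictly larger than the relative type of $\Sigma_0$ on $A$. On such a piece the measure algebra is isomorphic to a product $\{0,1\}^\lambda$ in which $\Sigma_0\cap A$ is captured by fewer than $\lambda$ coordinates. A set defined through an unused coordinate (combined with a fair-coin construction to reach conditional probability exactly $\delta$) is then independent of $\Sigma_0$. Some care is needed when $\kappa$ is singular, to guarantee that a single piece $A$ carries enough type. If this bookkeeping becomes awkward, I would fall back on an approximate version: find $D$ with $E(\bone_D\mid\Sigma_0)\leq 2\delta\bone_A$, which is all the estimate requires.
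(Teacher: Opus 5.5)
Your proposal is correct, but it is organised differently from the paper's proof. The paper first proves the statement only for the model spaces $L_p(\{-1,1\}^\kappa)$. For $\kappa=\omega$ it uses Carath\'eodory's theorem and the translated bumps $\bone_{[n,n+1]}$ in $L_p([0,\infty))$. For uncountable $\kappa$ it uses the cylinder sets $A_\sigma$ on $n$ coordinates that $\Z$ does not depend on. It then lifts the result to arbitrary $\mu$ through the full Maharam decomposition, invoking \Cref{prop: p-octa sum Y} when $\kappa=\sup_i\kappa_i$ and \Cref{lemma: ell_p octa} when there are $\kappa$ summands.

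You instead reduce everything to one global estimate for a tall thin bump $c\bone_D$, which only needs $\int_D|z|^p$ to be uniformly small on $MB_\Z$. Since that estimate ignores what $z$ does off $D$, you never need the stability result \Cref{prop: p-octa sum Y}. This buys you an elementary separable case that works for every $\mu$ directly (an atom far out, or absolute continuity on a small subset of a non-atomic set of finite measure), with no isomorphism theorem. In the uncountable case, your two branches correspond to the two branches of the paper's Step~2. The set $D$ you are after is precisely the paper's cylinder $A_\sigma$, so $\delta=2^{-n}$ suffices and no fair-coin refinement is needed. The paper's route is more modular: once the model case is done, the general case is formal from results already proved in the section. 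The price of your route is that Maharam's theorem is still needed, now on a single homogeneous piece.

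Your hesitations about the Maharam step are unfounded. In the branch $I=I_0$ there are at most $|I|\cdot\omega<\kappa$ homogeneous pieces, so their Maharam types have supremum $\kappa$. Hence a single piece $A$ of type $\lambda>\max\{\dens(\Z),\omega\}$ exists, whether or not $\kappa$ is singular. Moreover, any identification of $A$ with $\{0,1\}^\lambda$ will do. Every function there depends on countably many coordinates, so the functions $z\bone_A$ with $z\in\Z$ together depend on fewer than $\lambda$ coordinates.

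Two small points of bookkeeping. First, in the separable case the net $F$ must be finer than $\e$: its mesh has to be small compared with $\eta^{1/p}$, otherwise the transfer from $F$ to $MB_\Z$ destroys the estimate. Second, the family of finite-measure pieces must be maximal among sets with pairwise \emph{null} intersections. With a merely disjoint maximal family, an $f\in L_p(\mu)$ need not be a.e.\ supported on countably many pieces, and the $\ell_p$-decomposition can fail.
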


\begin{proof} We first prove the claim for $\mu$ being the product measure on $\{-1,1\}^\kappa$, and then deduce the general case, by means of Maharam's theorem.
\smallskip

\textbf{Step 1:} The case of $L_p(\{-1,1\}^\kappa)$.\\
We begin by recalling some well-known facts concerning functions in $L_p(\{-1,1\}^\kappa)$, see, \emph{e.g.}, \cites{Argyros, EnfloRosenthal, Kosz_TAMS}. For a function $f\colon \{-1,1\}^\kappa \to \R$ and a subset $\Lambda$ of $\kappa$, $f$ \emph{depends on $\Lambda$} if there is a function $g\colon \{-1,1\}^\Lambda\to \R$ such that $f(x)= g(x\cut_\Lambda)$ for all $x\in \{-1, 1\}^\kappa$ (equivalently, $f(x)= f(y)$ whenever $x\cut_\Lambda= y\cut_\Lambda)$. The function $f$ \emph{depends on finitely many} (resp.~\emph{countably many}) \emph{coordinates} if there is a finite (resp.~countable) subset $\Lambda$ of $\kappa$ such that $f$ depends on $\Lambda$. By the Stone--Weierstrass theorem, the set of functions that depend on finitely many coordinates is dense in $\C(\{-1,1\}^\kappa)$ (note that all such functions are continuous). Hence, by Lusin's theorem, such a set is also dense in $L_p(\{-1,1\}^\kappa)$. Since the set of functions that depend on a given set $\Lambda$ is closed under pointwise limits, it follows that every function in $L_p(\{-1,1\}^\kappa)$ depends on countably many coordinates.

We can now begin the proof. Fix $\e>0$ and a subspace $\Z$ of $L_p(\{-1,1\}^\kappa)$ such that $\dens(\Z)< \kappa$. As we saw in \Cref{ex: phi_p}, it is sufficient to find $g\in L_p(\{-1,1\}^\kappa)$ with $\|g\| =1$ and with the property that
\begin{equation}\label{eq: L_p p-octa}
    \|f+ g\|\geq (1-\e)\big(\|f\|^p+ 1\big)^{1/p} \qquad \mbox{for all }f\in \Z.
\end{equation}

We first consider the case that $\kappa= \omega$. In this case, both $\{-1,1\}^\omega$ and $[0,\infty)$ (with Lebesgue measure) are separable non-atomic measure algebras. Hence Carath\'eodory's theorem implies that $L_p(\{-1,1\}^\omega)$ is isometric to $L_p([0,\infty))$, and we actually perform the argument in the latter space. To begin with, as in the proof of \Cref{lemma: ell_p octa}, there is $M>0$ such that \eqref{eq: L_p p-octa} trivially holds for all $f\in \Z$ with $\|f\|>M$. Consider the function $g\coloneqq \bone_{[n,n+1]}$; for a fixed $f\in \Z$ and $n$ large enough
\[ \int_0^\infty |f+ g|^p d\mu= \int_0^n |f|^p d\mu+ \int_n^{n+1} |f+1|^p d\mu+ \int_{n+1}^\infty |f|^p d\mu\geq (1-\e/2)^p\big(\|f\|^p+ 1\big). \]
As the set $\{f\in \Z\colon \|f\|\leq M\}$ is compact (since $\Z$ is finite-dimensional), there is $n\in \N$ so that the above inequality (with $\e/2$ replaced by $\e$) holds for all $f\in \Z$ with $\|f\|\leq M$, which proves \eqref{eq: L_p p-octa}.

Next, we consider the case that $\kappa$ is uncountable. Since every function in $L_p(\{-1,1\}^\kappa)$ depends on countably many coordinates and $\dens(\Z)< \kappa$, there exists a subset $\Lambda$ of $\kappa$ with $|\Lambda|< \kappa$ such that all functions in $\Z$ only depend on the coordinates from $\Lambda$. Fix $n\in \N$ large enough that
\begin{equation}\label{eq: n large for L_p}
    \big(1- 2^{-n} \big)^{1/p}- 2^{-n/p}\geq 1- \e
\end{equation}
and choose ordinals $\alpha_1,\dots, \alpha_n\in \kappa \setminus \Lambda$. For a sign $\sigma \in\{-1,1\}^n$ consider the set
\[ A_\sigma\coloneqq \{x\in \{-1,1\}^\kappa\colon x(\alpha_j)= \sigma(j),\; j=1,\dots, n \}. \]
These sets form a partition of $\{-1,1\}^\kappa$ into sets of measure $2^{-n}$. Notice that, for a function $f\in \Z$,
\[ \int_{A_\sigma}|f|^p d\mu= 2^{-n}\int_{\{-1,1\}^\kappa} |f|^p d\mu \]
since $f$ does not depend on the coordinates $\alpha_1,\dots, \alpha_n$; in other words, $\|f\cdot \bone_{A_\sigma}\|= 2^{-n/p} \|f\|$.
We are now in position to define the function $g$ we are after. Fix one sign $\sigma$ and define $g\coloneqq 2^{n/p}\bone_{A_\sigma}$; note that $\|g\|= 1$. Then, for every $f\in \Z$ we have:
\begin{eqnarray*}
    \|f+ g\| &\geq& \|f\cdot \bone_{A_\sigma^\complement}+ g\|- \|f\cdot \bone_{A_\sigma}\|\\
    &=& \big(\|f\cdot \bone_{A_\sigma^\complement} \|^p + 1\big)^{1/p}- 2^{-n/p} \|f\|\\
    &=& \Big( \big(1- 2^{-n} \big) \|f\|^p + 1 \Big)^{1/p}- 2^{-n/p} \|f\|\\
    &\geq& \big(1- 2^{-n} \big)^{1/p}\cdot \big(\|f\|^p + 1 \big)^{1/p}- 2^{-n/p} \big(\|f\|^p + 1 \big)^{1/p}\\
    &=& \Big( \big(1- 2^{-n} \big)^{1/p}- 2^{-n/p}\Big)\cdot \big(\|f\|^p + 1 \big)^{1/p}\\
    &\overset{\eqref{eq: n large for L_p}}{\geq}& \big(1- \e \big)\cdot \big(\|f\|^p + 1 \big)^{1/p}
\end{eqnarray*}
This proves \eqref{eq: L_p p-octa} and concludes the first step.
\smallskip

\textbf{Step 2:} The general case.\\
By Maharam's theorem (see, \emph{e.g.}, \cite{Semadeni}*{\S~26} or the proof of \cite{LiTzaII}*{Theorem 1.b.2}) there are a cardinal $\kappa_0$ and a family $(\kappa_i)_{i\in I}$ of infinite cardinals such that
\[ L_p(\mu)\equiv \ell_p(\kappa_0)\oplus_p \left( \bigoplus_{i\in I}L_p(\{-1,1\}^{\kappa_i}) \right)_{\ell_p}. \]
Suppose first that $\kappa= \sup_{i\in I}\kappa_i$. Then, $L_p(\{-1,1\}^{\kappa_i})$ is $(<\kappa_i)$-$\phi_p$-octahedral by the previous step, hence \Cref{prop: p-octa sum Y} yields that $L_p(\mu)$ is also $(<\kappa_i)$-$\phi_p$-octahedral for every $i\in I$. Since $\kappa= \sup_{i\in I}\kappa_i$, this readily implies that $L_p(\mu)$ is $(<\kappa)$-$\phi_p$-octahedral as well (if fact, if $\dens(\Z)< \kappa$, there is some $i\in I$ with $\dens(\Z)< \kappa_i$). Otherwise, if $\sup_{i\in I}\kappa_i< \kappa$, it necessarily follows that $|I|= \kappa$. Therefore, we may write 
\[ L_p(\mu) \equiv \left( \bigoplus_{\alpha< \kappa} \X_\alpha \right)_{\ell_p}, \]
where each $\X_\alpha\neq \{0\}$. Thus, \Cref{lemma: ell_p octa} yields that $L_p(\mu)$ is $(<\kappa)$-$\phi_p$-octahedral.
\end{proof}

\begin{theorem}\label{thm: gamma L_p} Let $p\in[1,\infty)$, $\kappa$ be an infinite cardinal, $(\M,\Sigma, \mu)$ a measure space such that $\dens(L_p(\mu))= \kappa$, and $\Y$ a normed space with $\dens(\Y)< \kappa$. Then:
\begin{enumerate}
    \item\label{item: L_p small r} For $1\leq r\leq p$
    \[ \min\left\{\frac{2}{2^{1/p}}, \frac{2}{2^{1/q}}\right\}\leq \gamma(L_p(\mu)\oplus_r \Y)\leq \gamma^*(L_p(\mu)\oplus_r \Y)\leq \frac{2}{2^{1/p}}. \]
    \item\label{item: L_p large r} For $p\leq r<\infty$
    \[ \min\left\{\frac{2}{2^{1/p}}, \frac{2}{2^{1/q}}\right\}\leq \gamma(L_p(\mu) \oplus_r \Y)\leq \gamma^*(L_p(\mu) \oplus_r \Y)\leq \frac{2}{2^{1/r}}. \]
    \item\label{item: L_p infty r} For $r=\infty$
    \[ \min\left\{\frac{2}{2^{1/p}}, \frac{2}{2^{1/q}}\right\}\leq \gamma(L_p(\mu) \oplus_\infty \Y)\leq \gamma^*(L_p(\mu) \oplus_\infty \Y)\leq \max\left\{ \frac{2}{2^{1/p}}, \gamma^*(\Y)\right\}. \]
\end{enumerate}
\end{theorem}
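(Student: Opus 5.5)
The proof will follow that of \Cref{thm: gamma ell_p}, with \Cref{lemma: ell_p octa} replaced by \Cref{prop: L_p p-octa}. We handle the lower bounds (common to all three clauses) first and the upper bounds afterwards.

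\emph{Lower bounds.} Since $\dens(\Y)<\kappa$, we pick a regular cardinal $\kappa_0$ with $\dens(\Y)<\kappa_0\leq\kappa$, for instance the successor cardinal of $\dens(\Y)$, or $\omega$ when $\Y$ is finite dimensional. Then $\dens(\Y)<\cf(\kappa_0)=\kappa_0\leq \dens(L_p(\mu))$.
\begin{itemize}
\item If $p=1$, then $q=\infty$, so the minimum equals $1$ and the lower bound is trivial.
\item If $p>1$, then $L_p(\mu)$ is uniformly convex. \Cref{thm: superreflexive and not}\eqref{item: bound in superreflexive}, applied with $\kappa_0$ in place of $\kappa$, gives $\gamma(L_p(\mu)\oplus_r\Y)\geq 2/K(L_p(\mu);\kappa_0)$ for every $r\in[1,\infty]$. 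The statement there includes $r=\infty$ for the lower bound.
\end{itemize}
It then remains to bound $K(L_p(\mu);\kappa_0)\leq K(L_p(\mu))\leq\max\{2^{1/p},2^{1/q}\}$. If $\mu$ is not purely atomic, this is \cite{WellWill}*{Theorem 16.9}. If $\mu$ is purely atomic, $L_p(\mu)$ is some $\ell_p(\lambda)$ and $K=2^{1/p}$, which is no larger. Taking reciprocals gives the bound $\min\{2/2^{1/p},2/2^{1/q}\}$.

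\emph{Upper bounds.} By \Cref{prop: L_p p-octa}, $L_p(\mu)$ is $(<\kappa)$-$\phi_p$-octahedral, where $\kappa=\dens(L_p(\mu))$.
\begin{itemize}
\item For $r\leq p$, \Cref{prop: p-octa sum Y} shows that $L_p(\mu)\oplus_r\Y$ is $(<\kappa)$-$\phi_p$-octahedral. Its density is $\kappa$, because $\dens(\Y)<\kappa$. \Cref{prop: upper bound phi-octahedral} then yields $\gamma^*\leq 2/(1+\phi_p(1))=2/2^{1/p}$.
\item For $p\leq r<\infty$, we first observe that $(<\kappa)$-$\phi_p$-octahedrality implies $(<\kappa)$-$\phi_r$-octahedrality. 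This holds because $(\|z\|^p+1)^{1/p}\geq(\|z\|^r+1)^{1/r}$, as noted after \Cref{prop: p-octa sum Y}. Applying \Cref{prop: p-octa sum Y} with exponent $r$ and then \Cref{prop: upper bound phi-octahedral} gives $\gamma^*\leq 2/2^{1/r}$.
\item For $r=\infty$, \Cref{fact: gamma and d_BM} gives $\gamma^*(L_p(\mu)\oplus_\infty\Y)\leq\max\{\gamma^*(L_p(\mu)),\gamma^*(\Y)\}$. Here $\gamma^*(L_p(\mu))\leq 2/2^{1/p}$ by the first case with $\Y=\{0\}$.
\end{itemize}

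The main obstacle is already settled, namely the $(<\kappa)$-$\phi_p$-octahedrality in \Cref{prop: L_p p-octa}. What remains is bookkeeping. One point is the choice of $\kappa_0$, so that the cofinality hypothesis of \Cref{thm: superreflexive and not} holds even when $\kappa$ is singular. The other is quoting the Kottman constant of $L_p(\mu)$ correctly in both the atomic and the non-atomic case.
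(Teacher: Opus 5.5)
Your proposal is correct and matches the paper's proof. The paper notes that $K(L_p(\mu);\kappa_0)\leq K(L_p(\mu))\leq\max\{2^{1/p},2^{1/q}\}$ and then reruns the argument of \Cref{thm: gamma ell_p}: a regular $\kappa_0$ with $\dens(\Y)<\kappa_0\leq\kappa$ and \Cref{thm: superreflexive and not} give the lower bound, while \Cref{prop: L_p p-octa}, \Cref{prop: p-octa sum Y}, \Cref{prop: upper bound phi-octahedral} and \Cref{fact: gamma and d_BM} give the upper bounds. Your separate handling of the purely atomic case and your choice $\kappa_0=\omega$ for finite-dimensional $\Y$ only make explicit what the paper leaves implicit.
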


Let us explicitly point out the following particular case, where both $\gamma$ and $\gamma^*$ can be computed exactly: if $1\leq r\leq p\leq 2$,
\[ \gamma(L_p(\mu)\oplus_r \Y)= \gamma^*(L_p(\mu)\oplus_r \Y)= \frac{2}{2^{1/p}}. \]

\begin{proof} If $\kappa_0\leq \kappa$ is any infinite cardinal, $K(L_p(\mu);\kappa_0)\leq K(L_p(\mu))\leq \max\{2^{1/p}, 2^{1/q}\}$, where the second inequality follows from \cite{WellWill}*{Theorem 16.9} quoted above. The rest of the proof is identical to the one of \Cref{thm: gamma ell_p} and we omit repeating it.
\end{proof}

\begin{remark} In \cite{CLL}*{Problem 6.2} it is asked whether $(<\kappa)$-octahedral Banach spaces need contain $\ell_1(\kappa)$, which is then answered negatively in \cite{ACLLR}*{Remark 5.4}. The example given there is the Banach space $\X\coloneqq (\bigoplus_{k= 1}^\infty \ell_{p_k}(\omega_1))_{\ell_1}$. In fact, it is easy to check directly that $\X$ is $(<\omega_1)$-octahedral for every sequence $(p_k)_{k=1}^\infty \subseteq (1,\infty)$ such that $p_k\to 1$. On the other hand, $\X$ is WCG, whence it doesn't contain $\ell_1(\omega_1)$.

By means of \Cref{prop: L_p p-octa}, we can give an alternative, somewhat more natural, example. Indeed, if $\mu$ is a finite measure such that $L_1(\mu)$ is non-separable, $L_1(\mu)$ is $(<\omega_1)$-octahedral by \Cref{prop: L_p p-octa} and yet it doesn't contain $\ell_1(\omega_1)$, as it is WCG. This last fact also follows from a result due to Enflo and Rosenthal \cite{EnfloRosenthal}*{Theorem 2.1}, who proved that $\ell_p(\omega_1)$ is not isomorphic to a subspace of $L_p(\mu)$ for any finite measure $\mu$ and $p\in[1,\infty)$.
\end{remark}

In conclusion to this section, we briefly mention the case $p=\infty$ and show that every $L_\infty(\mu)$ space admits a lattice tiling by balls. In particular, $\gamma^*(L_\infty(\mu))=1$. The proof is immediate and it involves again the even integers grid.
\begin{proposition}\label{prop: L_infty} Every space $L_\infty(\mu)$ admits a lattice tiling by balls.
\end{proposition}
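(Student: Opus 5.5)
The plan is to mimic the extremally disconnected case of \Cref{thm: C(K)}, working directly with measurable sets in place of clopen sets. Take the subgroup
\[ \Lambda\coloneqq \{g\in L_\infty(\mu)\colon g(x)\in 2\ZZ \text{ for $\mu$-a.e.\ } x\}, \]
consisting of the (classes of) essentially bounded measurable functions with values in the even integers. It is plainly a subgroup, and I will show that $\{g+B_{L_\infty(\mu)}\}_{g\in\Lambda}$ is a lattice tiling.

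\emph{Packing.} If $g\neq h$ lie in $\Lambda$, then $g-h$ is $2\ZZ$-valued and nonzero on a set of positive measure. There $|g-h|\geq 2$, so $\|g-h\|_\infty\geq 2$. Hence $\Lambda$ is $2$-separated, the balls of radius one centred at points of $\Lambda$ are non-overlapping, and $\Lambda$ is discrete.

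\emph{Covering.} Fix $f\in L_\infty(\mu)$ and pick a bounded measurable representative. For $k\in\ZZ$ set
\[ U_k\coloneqq \{x\colon 2k-1< f(x)\leq 2k+1\}. \]
These sets are measurable and pairwise disjoint, they cover the whole space, and only finitely many are non-empty because $f$ is bounded. Define $g\coloneqq \sum_k 2k\,\bone_{U_k}$. Then $g$ is a bounded measurable function with values in $2\ZZ$, so $g\in\Lambda$, and $|f-g|\leq 1$ pointwise. Thus $\|f-g\|_\infty\leq 1$, which shows that $\Lambda$ is exactly $1$-dense, and the packing is a tiling.

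There is no real obstacle here, because measurable sets, unlike clopen sets in $\C(\K)$, are closed under the operations used in the construction. The only points requiring care are these:
\begin{itemize}
\item Using half-open intervals gives a genuine partition, which avoids the boundary issue handled via $\K_0$ in \Cref{thm: C(K)}.
\item The statement is independent of the choice of representative and holds for arbitrary, not necessarily $\sigma$-finite, measure spaces, since only measurability and boundedness of a representative are used.
\end{itemize}
As a consequence, $\gamma^*(L_\infty(\mu))=1$.
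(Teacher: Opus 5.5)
Your proposal is correct and is essentially the paper's proof. The paper uses the same subgroup of $2\ZZ$-valued functions and the same partition into the sets $U_k$; the only difference is that its half-open intervals are $2k-1\leq f<2k+1$ instead of your $2k-1<f\leq 2k+1$, which is immaterial.
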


\begin{proof} Let $(\M,\Sigma, \mu)$ be any measure space and consider the set $L_\infty(\mu;2\ZZ)$ of $2\ZZ$-valued functions in $L_\infty(\mu)$ (more precisely, equivalence classes of functions having one $2\ZZ$-valued representative). Plainly, $L_\infty(\mu;2\ZZ)$ is $2$-separated and we shall show that it is $1$-dense. If $f\in L_\infty(\mu)$, the sets
\[ U_k\coloneqq\{ m\in\M\colon  2k-1\leq f(m)< 2k+1 \} \]
are measurable and $\{U_k\}_{k\in \ZZ}$ is a partition of $\M$; additionally, only finitely many $U_k$'s are non-empty. Thus, the function
\[ g\coloneqq \sum_{k\in \ZZ} 2k\cdot \bone_{U_k} \]
belongs to $L_\infty(\mu;2\ZZ)$ and $\|f-g\|\leq 1$.
\end{proof}

\begin{remark} It follows from a result due to Pe\l cz\'ynski and Sudakov \cite{Sudakov} that there are $L_\infty(\mu)$ spaces that are not $1$-injective (see, \emph{e.g.}, the explanation in \cite{JKS_PAMS}*{p.~4473}). Therefore, the result above is not a consequence of \Cref{thm: C(K)}.
\end{remark}

\section{Open problems}\label{sec: problem}
In this last section we highlight a selection of natural problems that arise from our results. As it should be apparent at this point, the constants $\gamma(\X)$ and $\gamma^*(\X)$ are still far from being well understood and there is a large area for further research. Here we just present a few possible directions. We begin with a couple of problems concerning specific Banach spaces.

\begin{problem}\label{probl: ell1 +2 R} What is the exact value of $\gamma(\ell_1\oplus_2\R)$? Does it coincide with $\gamma^*(\ell_1\oplus_2\R)$? Further, what are the values of $\gamma(\ell_1\oplus_2 \ell_2)$ and $\gamma^*(\ell_1\oplus_2 \ell_2)$?
\end{problem}

Recall that in \Cref{ex: ell1 +2 R} we showed that $\gamma(\ell_1\oplus_2\R)>1$. Further, $\gamma(\ell_1\oplus_2 \R)\leq \gamma^*(\ell_1\oplus_2 \R)\leq \sqrt{2}$, because $\ell_1\oplus_2\R$ is $\sqrt{2}$-isomorphic to $\ell_1$. The state for $\ell_1\oplus_2 \ell_2$ is similar. We have $\gamma(\ell_1\oplus_2 \ell_2)>1$ by \Cref{thm: gamma LUR}, as this space admits a LUR point. Further, $\ell_1\oplus_1\ell_2$ is octahedral and $\sqrt{2}$-isomorphic to $\ell_1\oplus_2 \ell_2$; thus, $\gamma^*(\ell_1\oplus_2 \ell_2)\leq \sqrt{2}$ by \Cref{thm: octahedral}.

\begin{problem} What are the values of $\gamma(\ell_1 \oplus_2 \ell_1)$ and $\gamma^*(\ell_1 \oplus_2 \ell_1)$?
\end{problem}

Note that $\ell_1 \oplus_2 \ell_1$ is not octahedral, nor its unit ball has LUR points. Therefore, the only information we have is that $\gamma^*(\ell_1 \oplus_2 \ell_1)\leq \sqrt{2}$, because its Banach--Mazur distance from $\ell_1$ is $\sqrt{2}$ (or because $\ell_1 \oplus_2 \ell_1$ is $\phi_2$-octahedral, by \Cref{prop: p-octa sum Y}). Further, notice that $\gamma^*(\ell_1 \oplus_\infty \ell_1)=1$ (\Cref{fact: gamma and d_BM}) and $\gamma^*(\ell_1 \oplus_1 \ell_1)=1$. Thus, $\gamma^*(\ell_1 \oplus_p \ell_1) \leq \sqrt{2}$ for all $p\in [1, \infty]$ and it would also be interesting to study the (continuous) function $p\mapsto \gamma^*(\ell_1 \oplus_p \ell_1)$; for instance, is it true that it attains its maximum in $p=2$?

We now pose a problem concerning function spaces, related to our results in \Cref{sec: gamma=1} and \Cref{sec: L_p}.
\begin{problem} 
Is it true that $\gamma^*(\C(\K))= 1$ for all (metrisable) compact topological spaces?

What are the values of $\gamma(L_p(\mu))$ and $\gamma^*(L_p(\mu))$ for $p\in (2,\infty)$?
\end{problem}

We now mention just one possible sample problem in renorming theory.
\begin{problem}\label{probl: renorm ell2} Is there a norm $\nn\cdot$ on $\ell_2$ such that $\gamma((\ell_2,\nn\cdot)) >\sqrt{2}$?
\end{problem}

As we mentioned in the Introduction, the isomorphic theory is essentially unexplored and this question is just one possible direction, motivated by the fact that $\ell_2$ can be renormed to obtain $\gamma^*((\ell_2,\nn\cdot)) =1$ (\Cref{prop: renorm ell_2}). The question should also be compared to the fact that $K((\ell_2,\nn\cdot)) \geq\sqrt{2}$ for all norms on $\ell_2$, \cite{Kottman}.

Finally, the last two problems we ask involve an arbitrary normed space.
\begin{problem}\label{probl: separable gamma=2} Is there a separable normed space $\X$ such that $\gamma(\X)= 2$?

What about a (separable) super-reflexive one?
\end{problem}

As we saw in \Cref{sec: gamma=2}, there exist normed spaces $\X$ with $\gamma(\X)= 2$; the argument given there could produce reflexive, or octahedral, examples, but only of density at least $\omega_\omega$ and not super-reflexive. Recall that, by \Cref{fact: finite-dim gamma}, there can't be finite-dimensional examples.

\begin{problem}\label{probl: gamma= gamma*} Is there a normed space $\X$ such that $\gamma(\X)\neq \gamma^*(\X)$?
\end{problem}

Differently from the previous problem, this one is also open for finite-dimensional spaces and we refer to the Introduction for some partial results and references. It is even conjectured, see \emph{e.g.}, \cite{Zong_sphere_book}*{Problem 11.5}, that $\gamma(\R^n)\neq \gamma^*(\R^n)$ for some large values of $n$. (Likewise, it was conjectured by Rogers that $\delta_n\neq \delta^*_n$ for some values of $n$, \cite{Rogers}*{p.~14}.) One way to answer the problem in $\R^n$ (by \Cref{fact: finite-dim gamma}) would be to show that $\gamma^*(\R^n)\geq 2$ for some $n\in \N$. On the other hand, it is not completely inconceivable that $\gamma(\R^n)\neq \gamma^*(\R^n)$ for some $n\in \N$, while $\gamma(\X)= \gamma^*(\X)$ for all infinite-dimensional normed spaces.

\bigskip
\noindent\textbf{Acknowledgements.} We are grateful to Gilles Godefroy, Johann Langemets, and Abraham Rueda Zoca for some remarks on the notion of $\phi$-octahedrality, and to Konrad Swanepoel and Chuanming Zong for some references concerning the simultaneous packing and covering constant. We also thank Piero Papini and Clemente Zanco for providing us with a copy of the papers \cites{CPZ, Rog84}.


\end{document}